\theoremstyle{plain} \newtheorem{theorem}{Theorem}[section]
\theoremstyle{plain} \newtheorem{proposition}[theorem]{Proposition}
\theoremstyle{plain} \newtheorem{corollary}[theorem]{Corollary}
\theoremstyle{plain} \newtheorem{lemma}[theorem]{Lemma}
\theoremstyle{plain} 
\theoremstyle{remark} \newtheorem{remark}[theorem]{Remark}
\theoremstyle{plain}  
\theoremstyle{remark}\newtheorem{rem}{Remark}[section]
\newcounter{hypH}
\newenvironment{hypH}{\refstepcounter{hypH}\begin{itemize}
\item[{\bf H\arabic{hypH}}]}{\end{itemize}}
\newcounter{hypCov}
\def\Rset{\mathbb{R}}
\def\Nset{\mathbb{N}}
\newcommand{\esssup}{\operatornamewithlimits{ess \ sup}} 
\def\PP{\mathbb{P}}
\def\bfx{\mathbf{x}}
\def\bfy{\mathbf{y}}
\def\bfY{\mathbf{Y}}
\def\bfz{\mathbf{z}}
\def\bff{\mathbf{f}}
\def\denseps{\varphi}
\def\Esp{\mathbb{E}}
\def\Pr{\mathbb{P}}
\def\PPim{\PP_{\star}}
\newcommand{\1}{\ensuremath{\mathbf{1}}}
\newcommand{\rmd}{\mathrm{d}}
\newcommand{\rme}{\mathrm{e}}
\newcommand{\rmL}{\mathrm{L}}
\newcommand{\dimY}{\ell} 
\newcommand{\dimK}{m}
\newcommand{\norm}[1]{\|#1\|}
\newcommand{\norminf}[2]{\|#1\|_{#2}}
\newcommand{\eqsp}{\;}
\newcommand{\eqdef}{\ensuremath{\stackrel{\mathrm{def}}{=}}}
\newcommand{\paramstar}{f_{\star}}
\newcommand{\dens}{\nu}
\newcommand{\param}[2]{\widehat{f}_{#1}^{#2}}
\newcommand{\argmax}{\operatornamewithlimits{argmax}} 
\begin{document}

 \title{Nonparametric regression on hidden $\Phi$-mixing variables: identifiability and consistency of a pseudo-likelihood based estimation procedure}
 \author{Thierry Dumont\,\footnote{MODAL'X, Universit\'e Paris-Ouest, Nanterre, France. thierry.dumont@u-paris10.fr} and Sylvain Le Corff\,\footnote{Laboratoire de Math\'ematiques, Universit\'e Paris-Sud and CNRS, Orsay, France. sylvain.lecorff@math.u-psud.fr}}\,

\maketitle    

\begin{abstract}
This paper outlines a new nonparametric estimation procedure for unobserved $\Phi$-mixing processes. It is assumed that the only information on the stationary hidden states $(X_{k})_{k\ge 0}$ is given by the process $(Y_{k})_{k\ge 0}$, where $Y_{k}$ is a noisy observation of $\paramstar(X_{k})$. The paper introduces a maximum pseudo-likelihood procedure to estimate the function $\paramstar$ and the distribution $\dens_{b,\star}$ of $(X_0,\dots,X_{b-1})$ using blocks of observations of length $b$. The identifiability of the model is studied in the particular cases $b=1$ and $b=2$ and the consistency of the estimators of $\paramstar$ and of $\dens_{b,\star}$ as the number of observations grows to infinity is established. 
\end{abstract}

\section{Introduction}
\label{NPHMM:sec:intro}
The model considered in this paper consists of a bivariate stochastic process $\{(X_k,Y_k)\}_{k\geq 0}$ where only the sequence $(Y_k)_{k\ge 0}$ is observed. These observations are given by 
\begin{equation}
\label{NPHMM:eq:model}
Y_{k} = \paramstar(X_{k}) + \epsilon_{k}\eqsp,
\end{equation}
where $\paramstar$ is a function defined on a space $\mathbb{X}$ and taking values in $\Rset^{\dimY}$. The measurement noise $(\epsilon_{k})_{k\ge 0}$ is an independent and identically distributed (i.i.d.) sequence of Gaussian random vectors of $\Rset^{\dimY}$. This paper proposes a new method to estimate the function $\paramstar$ and the distribution of the hidden states using only the observations $(Y_k)_{k\geq 0}$. Nonparametric estimation with latent random variables is a challenging task and most of the existing results in this context use additional assumptions on the sequence $(X_k)_{k\geq 0}$. For instance, in \textit{errors-in-variables} models, the random variables $(X_k)_{k\ge 0}$ are observed through a sequence $(Z_k)_{k\ge 0}$, i.e. $Z_{k} = X_{k} + \eta_{k}$ and $Y_{k} = \paramstar(X_{k}) + \epsilon_{k}$, where the variables $(\eta_k)_{k\ge 0}$ are i.i.d with known distribution. 
Many solutions have been proposed to solve this problem, see \cite{fan:truong:1993} and \cite{ioannides:alevizos:1997} for a ratio of deconvolution kernel estimators, \cite{koo:lee:1998} for B-splines estimators and \cite{comte:taupin:2007} for a procedure based on the minimization of a penalized contrast. In the case where the hidden state is a Markov chain, \cite{lacour:2008a,lacour:2008b} considered the following observation model $Y_{k}= X_{k}+\epsilon_{k}$, where the random variables $\{\epsilon_{k}\}_{k\ge 0}$ are i.i.d. with known distribution.  \cite{lacour:2008a} (resp. \cite{lacour:2008b}) proposed an estimator of the transition density (resp. the stationary density and the transition density) of the Markov chain $(X_{k})_{k\ge 0}$ based on the minimization of a penalized $\rmL_2$ contrast. 

Recently, \cite{dumont:lecorff:2014} used the model \eqref{NPHMM:eq:model} for indoor simultaneous localization and mapping based on WiFi signals. In this framework, the process $(X_k)_{k \ge 0}$ is the position of a mobile device evolving in a building and it is assumed to be a Markov chain with transition density depending only on the distance between two consecutive states. $Y_k$ denotes the signal strengths measured by the device at time step $k$. Only $(Y_k)_{k\ge 0}$ is observed and inference on the hidden positions $(X_k)_{k\ge 0}$ (localization) requires an efficient estimation of $\paramstar$ (mapping). 

In this paper, the random process $(X_k)_{k\ge 0}$ is assumed to be $\Phi$-mixing and stationary which encompasses the i.i.d. case and the hidden Markov model setting of \cite{dumont:lecorff:2014}. We propose a new approach to estimate the function $\paramstar$ and the distribution $\dens_{b,\star}$ of the hidden states $(X_0,\dots,X_{b-1})$ for a given $b$ using only the observations $(Y_k)_{k\geq 0}$.  The identifiability of the model is studied and we show that for some particular cases, $f_\star$ may be recovered up to an isometric transformation of $\mathbb{X}$. The observations are decomposed into non-overlapping blocks $(Y_{kb},\dots,Y_{(k+1)b-1})$ to define a pseudo likelihood function. The estimator $(\widehat{f}_n,\widehat{\nu}_n)$ of $(\paramstar, \nu_{b,\star})$  is defined as a maximizer of a penalized version of the pseudo-likelihood of the observations $(Y_0,\dots,Y_{nb-1})$ over a class of functions $\mathcal{F}$ and a class of densities $\mathcal{D}_b$ on $\mathbb{X}^b$. These estimators of $\paramstar$ and $\dens_{b,\star}$ may then be used to define an estimator $\widehat{p}_{n}$ of the density of the distribution of $(Y_{0},\dots,Y_{b-1})$. It is proved in Section~\ref{NPHMM:sec:consistency} that the Hellinger distance between $\widehat{p}_{n}$ and the true distribution of a block of observations vanishes as the number of observations grows to infinity. This result is established using few assumptions on the model: the penalization function needs only to be lower bounded by a power of the supremum norm and no topological restrictions are made on $\mathbb{X}$. Under compacity assumptions on $\mathcal{F}$ and $\mathcal{D}_b$ the consistency of $(\param{n}{},\widehat{\dens}_{n})$ is derived although the rate of convergence of  $(\param{n}{},\widehat{\dens}_{n})$  remains an open problem and seems to be  very challenging. 

In Section~\ref{NPHMM:sec:identifiability}, we discuss the identifiability issues raised by the model \eqref{NPHMM:eq:model}. When $b=1$, the identifiability is studied in the particular case where  $\mathbb{X}$ is a subset of $\mathbb{R}^m$ for some $m>0$, $\paramstar$ is a $\mathcal{C}^1$ diffeomorphism and  $\mathcal{F}$ is a subset of  continuously differentiable functions on $\mathbb{X}$. We establish that if $\tilde{X}_0$ has a distribution with probability density $\dens$ and if $\tilde{f} \in \mathcal{F}$ is such that $\tilde{f}(\tilde{X}_0)$ and $\paramstar(X_0)$ have the same distribution then $\tilde{f}=\paramstar \circ \phi$ and $\nu = |J_\phi|\cdot\nu_{1,\star}\circ \phi$ where $\phi:\mathbb{X}\to\mathbb{X}$ is a bijective function ($|J_\phi|$ denotes the determinant of the Jacobian  matrix of $\phi$). This result only requires regularity assumptions on the unknown function $\paramstar$ and not on the candidate function $\tilde{f}$, which in particular is not assumed to be one-to-one. This implies that the inference task may be performed within a larger class of functions. A similar result is obtained when $b=2$  to establish that the model is identifiable up to an isometric transformation of $\mathbb{X}$ in the context of \cite{dumont:lecorff:2014}.
 
The consistency and identifiability results are applied in Section~\ref{NPHMM:sec:sobo} when $\mathcal{F}$ is assumed to be a Sobolev class of functions. In this setting, the supremum norm in $\mathcal{F}$ may be controlled by the penalty term to ensure that $\widehat{p}_n$ is consistent. Moreover, this framework satisfies the compacity assumption needed in Section \ref{NPHMM:sec:consistency} to derive the consistency of $(\widehat{f}_n,\widehat{\nu}_n)$.  Section~\ref{NPHMM:sec:sobo:simu} provides numerical experiments to illustrate our estimation approach and the identifiability results of Section~\ref{NPHMM:sec:identifiability}.
Proofs and technical results are postponed to Section~\ref{NPHMM:sec:proofs} and to the appendices. 

\section{Model and definitions}
\label{NPHMM:sec:model}
Let $(\Omega,\mathcal{E},\mathbb{P})$ be a probability space and  $(\mathbb{X},\mathcal{X})$ be a general state-space endowed with a measure $\mu$. Let $(X_k)_{k\ge 0}$ be a stationary process defined on $\Omega$ and taking values in $\mathbb{X}$. This process is only partially observed through the sequence $(Y_k)_{k\ge 0}$ which takes values in $\mathbb{R}^\dimY$, $\dimY\ge 1$.  In the sequel, for any $0\le k \le k'$, the sequence $(x_k,\dots,x_k')$ is written $x_{k:k'}$. The observations $(Y_k)_{k\ge 0}$ are given by \eqref{NPHMM:eq:model} where $\paramstar:\mathbb{X} \rightarrow \mathbb{R}^\dimY$ is a measurable function and the random variables $(\epsilon_{k})_{k\ge 0}$ are i.i.d. with density $\denseps$ with respect to the Lebesgue measure $\lambda$ of $\mathbb{R}^\ell$, given, for any $z_{1:\dimY}\in\mathbb{R}^{\dimY}$, by:
\begin{equation}
\label{NPHMM:eq:denseps}
\denseps(z_{1:\dimY}) \eqdef \left(2\pi\right)^{-\dimY/2}\exp\left( -\frac{1}{2}\sum_{j=1}^{\dimY} z_j^2\right)\eqsp.
\end{equation} 
In this paper, $\epsilon_0$ is assumed to be distributed according to a standard normal  distribution. Note that this setting is enough to deal with a known and nonsingular covariance matrix $\Sigma$. In this case, $(Y_k)_{k\ge 0}$ may be replaced by $(\Sigma^{-1/2}Y_k)_{k\ge 0}$ and the modified noise $\Sigma^{-1/2}\epsilon_0$ is then a standard normal random vector. 

This paper proposes a method to estimate the target function $\paramstar\in\mathcal{F}$, where $\mathcal{F}$ is a set of functions from $\mathbb{X}$ to $\mathbb{R}^{\dimY}$, and the distribution of the hidden states using only the observations $(Y_k)_{k\geq 0}$. This problem could be interpreted as a deconvolution problem where it is usual to assume that the noise distribution is known, see for instance \cite{carroll:hall:1988,Koo::1999,lacour:2006}. Here, the density $\varphi$ is assumed to be known to simplify the proof of identifiability (Section~\ref{NPHMM:sec:identifiability}). This proof only needs  the characteristic function of $\epsilon_0$ to be known and non zero. Note that the Gaussian assumption is only used to establish the consistency result (Theorem \ref{NPHMM:th:cons:MPLE}) which relies on an entropy control written for this particular choice of density function.  A few authors have studied the deconvolution problem with unknown noise distribution. In \cite{Comte:2011}, the estimation of the density of $X$ in the model $Y=X+\epsilon$ is performed without knowing the distribution $\epsilon$ and under mild assumptions on the smoothness of the underlying densities. However, \cite{Comte:2011} only considered real valued random variables and the estimation based on Fourier transform and bandwidth selection is hardly relevant in our model. The main difference between the model studied in this paper and classical convolution models is that the random vector $f_\star(X_k)$ does not necessarily have a density with respect to the Lebesgue measure on $\mathbb{R}^\ell$. As discussed in Section \ref{NPHMM:sec:sobo} (Corollary \ref{NPHMM:cons:image}),  under some  assumptions on $f_\star$, if the state-space $\mathbb{X}$ is a subset of $\mathbb{R}^m$ with $m<\ell$,  $f_\star(X_k)$ lies in a sub-manifold of dimension $m$ in $\mathbb{R}^\ell$ which has a null Lebesgue measure and then classical deconvolution tools do not apply here.

Let $b$ be a positive integer. For any sequence $(x_k)_{k\ge 0}$, define $\mathbf{x}_k\eqdef (x_{kb},\ldots, x_{(k+1)b-1})$ and for any function $f:\mathbb{X}\to\mathbb{R}^\dimY$, define $\mathbf{f}:\mathbb{X}^b\to\mathbb{R}^{b\dimY}$ by
\[
\mathbf{x}  =(x_0,\ldots,x_{b-1})\mapsto \mathbf{f}(\mathbf{x})\eqdef (f(x_{0}),\ldots, f(x_{b-1}))\eqsp.
\]
The distribution of $\mathbf{X}_{0}$ is assumed to have a density $\dens_{b,\star}$ with respect to the measure $\mu^{\otimes b}$ on $\mathbb{X}^b$ which lies in a set of probability densities  $\mathcal{D}_b$. For all $f\in\mathcal{F}$ and $\dens\in \mathcal{D}_b$, let $p_{f,\dens}$ be defined, for all $\mathbf{y}\in\mathbb{R}^{b\dimY}$, by
\begin{equation}
\label{NPHMM:eq:pf}
p_{f,\dens}(\mathbf{y}) \eqdef  \int    \dens(\mathbf{x}) \prod_{k=0}^{b-1}\varphi( y_k - f(x_k)) \mu^{\otimes b}\left(\mathrm{d}\mathbf{x}\right)\eqsp.
\end{equation}
Note that  $p_{\paramstar,\dens_{b,\star}}$ is the probability density of $\mathbf{Y}_{0}$ defined in \eqref{NPHMM:eq:model}: for all $\mathbf{y}\in\mathbb{R}^{b\dimY}$,
\begin{equation}
\label{NPHMM:eq:pstar}
p_\star(\mathbf{y}) \eqdef p_{\paramstar,\dens_{b,\star}}(\mathbf{y}) = \int    \dens_{b,\star}(\mathbf{x}) \prod_{k=0}^{b-1}\varphi( y_k - f_{\star}(x_k)) \mu^{\otimes b}\left(\mathrm{d}\mathbf{x}\right)\eqsp.
\end{equation}
The function $y_{0:nb -1} \mapsto \sum_{k=0}^{n-1} \ln p_{f,\dens}\left(\mathbf{y}_{k} \right)$
is referred to as the pseudo log-likelihood of the observations up to time  $nb-1$.  This paper introduces an estimation procedure based on the method of M-estimation presented in \cite{vandervaart:wellner:1996} and \cite{vandegeer:2000}. Consider a function $I : \mathcal{F}\rightarrow \mathbb{R}^+$ which characterizes  the complexity of functions in $\mathcal{F}$ and let $\delta_n$ and $\lambda_n$ be some positive numbers. Define the following $\delta_n$-Maximum Pseudo-Likelihood Estimator ($\delta_n$-MPLE) of $(\paramstar ,\dens_{b,\star})$: 
\begin{equation}\label{NPHMM:eq:fn} 
\left(\param{n}{} , \widehat{\dens}_{n} \right) \eqdef \underset{f\in\mathcal{F},\ \dens\in \mathcal{D}_b}{\mbox{argmax}^{\delta_n}} \left\{ \sum_{k=0}^{n-1} \ln p_{f,\dens}\left(\mathbf{Y}_{k} \right) -\lambda_{n} I(f)\right\}\eqsp,
\end{equation}
where $\underset{f\in\mathcal{F},\ \dens\in \mathcal{D}_b}{\mbox{argmax}^{\delta_n}}$ is one of the pairs $(f',\dens')$ such that 
\begin{equation*}
 \sum_{k=0}^{n-1} \ln p_{f',\dens'}\left(\mathbf{Y}_{k} \right) -\lambda_{n} I(f')
\ge \sup_{f\in\mathcal{F},\ \dens\in \mathcal{D}_b}\left\{ \sum_{k=0}^{n-1} \ln p_{f,\dens}\left(\mathbf{Y}_{k} \right) -\lambda_{n} I(f)\right\}-\delta_n\eqsp.
\end{equation*}
The consistency of the estimators is established using a control for empirical processes associated with mixing sequences. The $\Phi$-mixing coefficient between two $\sigma$-fields $\mathcal{U},\mathcal{V}\subset \mathcal{E}$ is defined in \cite{dedecker:2009} by
\[
\Phi(\mathcal{U},\mathcal{V}) \eqdef \sup_{\substack{U\in\mathcal{U},V\in\mathcal{V},\\ \mathbb{P}(U)>0}} \left|\frac{\mathbb{P}\left(U \cap V \right)}{\mathbb{P}(U)} -\mathbb{P}(V)\right|\eqsp.
\]  
The stationary process $(X_k)_{k\ge0}$ can be extended to a two-sided process $(X_k)_{k\in\mathbb{Z}}$ which is said to be $\Phi$-mixing when $\lim_{i\to\infty}\Phi^X_i = 0$ where, for all $i\ge 1$,
\begin{equation}
\label{eq:phix}
\Phi^X_i \eqdef \Phi\left(\sigma\left(X_k \ ; \ k\le 0 \right),\sigma\left(X_k \ ; \ k\ge i \right) \right)\eqsp, 
\end{equation}
$\sigma\left(X_k \ ; \ k\in C \right)$ being the $\sigma$-field generated by $(X_k)_{k\in C}$  for any $C\subset\mathbb{Z}$. As in \cite{samson:2000}, the required concentration inequality for the empirical process is established under the following assumption on the $\Phi$-mixing coefficients of $(X_k)_{k\ge0}$.

\begin{hypH}
\label{assum:phimix}
The stationary process $(X_k)_{k\ge0}$ satisfies $\mathbf{\Phi} \eqdef \sum_{i=1}^{\infty} (\Phi^X_i)^{1/2} < \infty$ where $\Phi^X_i$ is given by \eqref{eq:phix}.
\end{hypH}

\begin{remark}  
\begin{enumerate}[-]
\item If $(X_k)_{k\ge0}$ is i.i.d., then $\Phi^X_i = 0 $ for all $i\ge 1$ and H\ref{assum:phimix} is  satisfied.
\item Assume $(X_k)_{k\ge 0}$ is a stationary Markov chain with transition kernel $Q$ and stationary distribution $\pi$ such that there exist $\epsilon>0$ and a probability measure $\vartheta$ on $\mathbb{X}$ satisfying, for all $x\in\mathbb{X}$ and all $A\in\mathcal{X}$,
\[
Q(x,A)\ge\epsilon\vartheta(A)\eqsp.
\]
Then, by \cite[Theorem~$16.2.4$]{meyn:tweedie:1993}, for all $x\in\mathbb{X}$ and all $A\in\mathcal{X}$,
\[
\left|Q^n(x,A)-\pi(A)\right|\le(1-\varepsilon)^n\eqsp.
\]
Therefore, for all $n,k>0$ and $A,B\in\mathcal{X}$ such that $\pi(A)>0$,
\begin{align*}
\left|\mathbb{P}\left(X_{k+n}\in B\middle|X_k\in A\right)-\mathbb{P}\left(X_{k+n}\in B\right)\right| &= \left|\mathbb{P}\left(X_{k+n}\in B\middle|X_k\in A\right)-\pi\left(B\right)\right|\eqsp,\\
&\le\frac{1}{\pi(A)}\left|\int_{A}\left(Q^n(x,B)-\pi(B)\right)\pi(\mathrm{d}x)\right|\eqsp,\\
&\le (1-\varepsilon)^n\eqsp.
\end{align*}
The $\Phi$-mixing coefficients associated with $(X_k)_{k\ge 0}$ decrease geometrically and H\ref{assum:phimix} is satisfied.
\end{enumerate}
\end{remark}
 
\section{ General convergence results} 
\label{NPHMM:sec:consistency}
Denote by $\widehat{p}_{n}$  the estimator of $p_\star$ (defined in \eqref{NPHMM:eq:pstar}), given by 
\begin{equation}
\label{NPHMM:eq:MLE} 
\widehat{p}_n \eqdef p_{\param{n}{},\hat{\dens}_n}\eqsp,
\end{equation}
where $(\param{n}{},\hat{\dens}_n)$ is defined in \eqref{NPHMM:eq:fn}. The first step to prove the consistency of the estimators is to establish the convergence of $\widehat{p}_n$ to $p_{\star}$. The only assumption required on the penalization procedure is that the function $I$ is lower bounded by a power of the supremum norm.  
\begin{hypH}
\label{assum:pen}
There exist $C>0$ and $\upsilon>0$ such that for all $f\in\mathcal{F}$,
\begin{equation}
\label{NPHMM:hyp:I}
\|f\|_\infty \le CI(f)^\upsilon\eqsp,
\end{equation}
with, for any $f\in\mathcal{F}$, $\|f\|_{\infty}\eqdef\underset{1\leq j\leq\dimY}{\max}\;\;\underset{x\in\mathbb{X}}{\esssup} |f_j(x)|$.
\end{hypH}
Here, $\esssup$ denotes the essential supremum with respect to the measure $\mu$ on $\mathbb{X}$.  Note that if H\ref{assum:pen} holds, since $I:\mathcal{F}\to\mathbb{R}^+$,  for all $f\in\mathcal{F}$, $\|f\|_\infty \le CI(f)^\upsilon<\infty$. This is the only restrictive assumption on the penalty $I(f)$ which may be chosen arbitrarily as long as H\ref{assum:pen} holds.
\begin{hypH}
\label{assum:Db}
There exist $0<\nu_-<\nu_+<\infty$ such that, for all $\nu\in\mathcal{D}_b$, $\nu_-\le\nu\le \nu_+$.
\end{hypH}
The convergence  of $\widehat{p}_n$ to $p_{\star}$ is established using the Hellinger metric defined, for any probability densities $p_{1}$ and $p_{2}$ on $\Rset^{b\dimY}$, by
\begin{equation}
\label{eq:hellinger}
h(p_{1},p_{2}) \eqdef \left[\frac{1}{2}\int \left(p_{1}^{1/2}(y)-p_{2}^{1/2}(y)\right)^{2}\rmd y\right]^{1/2}\eqsp.
\end{equation}
Theorem~\ref{NPHMM:th:cons:MPLE} provides a rate of convergence of $\widehat{p}_n$ to $p_{\star}$ and a bound for the complexity $I(\param{n}{})$ of the estimator $\param{n}{}$.  
\begin{theorem}
\label{NPHMM:th:cons:MPLE}
Assume H\ref{assum:phimix}-\ref{assum:Db}  hold for some $\upsilon$ such that $b\dimY\upsilon<1$. Assume also that $\lambda_n$ and $\delta_n$ satisfy
\begin{equation}\label{NPHMM:eq:lambda}
 \lambda_{n} n^{-1} \underset{n\to +\infty}{\longrightarrow} 0 , \ \lambda_{n} n^{-1/2} \underset{n\to +\infty}{\longrightarrow} +\infty\text{ and } \delta_n  = O \left(\frac{\lambda_n}{n}\right)\eqsp.
\end{equation}
Then,
\begin{equation}\label{NPHMM:eq:hell}
h^{2}(\widehat{p}_{n},p_{\star}) =O_{\PP }\left( \frac{\lambda_{n}}{n}\right)\quad\mbox{and}\quad I(\param{n}{})= O_{\PP }(1)\eqsp.
\end{equation}
\end{theorem}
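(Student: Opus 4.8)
The plan is to follow the standard template for consistency of penalized M-estimators (as in van de Geer's book, Chapter 10), but to replace the i.i.d./martingale empirical-process bounds by bounds valid for $\Phi$-mixing sequences. The quantity to control is the empirical pseudo-process
\[
\nu_n(f,\dens)\eqdef \frac1n\sum_{k=0}^{n-1}\ln\frac{p_{f,\dens}(\mathbf{Y}_k)}{p_\star(\mathbf{Y}_k)}\eqsp.
\]
The defining inequality for the $\rho_n$-MPLE gives, after comparing with the true parameters $(\paramstar,\dens_{b,\star})$,
\[
\nu_n(\param{n}{},\widehat\dens_n)\ge \frac{\lambda_n}{n}\bigl(I(\param{n}{})-I(\paramstar)\bigr)-\frac{\rho_n}{n}\eqsp,
\]
so the whole proof reduces to bounding $\nu_n$ from above, uniformly over $(f,\dens)$, in terms of the Hellinger distance $h(p_{f,\dens},p_\star)$ and the complexity $I(f)$. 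I would introduce the convex combination $\bar p_{f,\dens}=(p_{f,\dens}+p_\star)/2$ and work with $\bar\nu_n(f,\dens)=\frac1n\sum_k \ln(\bar p_{f,\dens}/p_\star)(\mathbf{Y}_k)$; the key algebraic inequality $\ln(\bar p/p_\star)\le 2(\sqrt{\bar p/p_\star}-1)$ and the reverse bound $h^2(\bar p,p_\star)\ge c\,h^2(p,p_\star)$ turn the problem into controlling a \emph{bounded} functional of the square-root density ratio, namely $g_{f,\dens}\eqdef \sqrt{\bar p_{f,\dens}/p_\star}-1$, which satisfies $\|g_{f,\dens}\|_\infty\le 1$ and $\mathbb{E}[g_{f,\dens}(\mathbf{Y}_0)]\le -\tfrac12 h^2(\bar p_{f,\dens},p_\star)+\text{(var term)}$.

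Next I would \textbf{stratify by the level of the complexity}: on the event $\{2^{j-1}<I(\param{n}{})\vee 1\le 2^j\}$, Assumption H\ref{assum:pen} forces $\|\param{n}{}\|_\infty\le C2^{j\upsilon}$, which confines $\param{n}{}$ to a ball of functions whose entropy (with respect to the supremum norm, hence with respect to the Hellinger metric on $p_{f,\dens}$, using the Lipschitz dependence of $\mathbf{y}\mapsto p_{f,\dens}(\mathbf{y})$ on $f$ through the Gaussian density $\varphi$ and the bounds $\nu_-\le\dens\le\nu_+$ of H\ref{assum:Db}) grows only polynomially. Then I would apply the $\Phi$-mixing concentration inequality — the extension of \cite[Theorem~3]{samson:2000} announced in the introduction — to $\sup$ of the centred empirical process $\bar\nu_n(f,\dens)-\mathbb{E}\bar\nu_n(f,\dens)$ over a Hellinger ball of radius $\delta$, together with the maximal inequality of \cite{doukhan:massart:rio:1995} to bound its expectation by an entropy integral of order $\delta/\sqrt n$ (up to logarithmic factors). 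Combining the deviation bound with the "basic inequality" above yields, via the standard peeling argument over the Hellinger radius, an inequality of the form
\[
h^2(\widehat p_n,p_\star)\le \text{const}\cdot\Bigl(\frac{\text{(entropy term)}}{n}+\frac{\lambda_n}{n}I(\param{n}{})+\frac{\rho_n}{n}+\frac{\lambda_n}{n}\Bigr)\eqsp.
\]

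\textbf{Finally}, I would close the loop on both conclusions simultaneously. The condition $b\dimY\upsilon<1$ is exactly what makes the entropy term $2^{j\upsilon\cdot(b\dimY)}$ grow more slowly than the penalty gain $\lambda_n 2^j/n$ (recall $\lambda_n n^{-1/2}\to\infty$), so that on the stratum indexed by $j$ the right-hand side is dominated by $-\lambda_n 2^{j-1}/n + o(\lambda_n 2^j/n)$ once $j$ is large; since the left side is nonnegative this rules out large $j$ with probability tending to one, giving $I(\param{n}{})=O_{\PP}(1)$. Feeding $I(\param{n}{})=O_{\PP}(1)$ back into the displayed inequality, and using $\rho_n=O_{\PP}(\lambda_n/n)$ and $\lambda_n n^{-1}\to 0$ (so the $1/n$ entropy term, now with a bounded entropy, is $o(\lambda_n/n)$ because $\lambda_n n^{-1/2}\to\infty$), yields $h^2(\widehat p_n,p_\star)=O_{\PP}(\lambda_n/n)$. \textbf{The main obstacle} is the first half of step two: establishing the $\Phi$-mixing concentration inequality for the empirical process of the \emph{unbounded} log-ratio functionals — the functions $\ln(\bar p_{f,\dens}/p_\star)$ are bounded above but not below, and $p_\star$ can be arbitrarily small in the tails — so one must carefully combine the truncation trick (the $\bar p$ device already tames the upper tail; the lower tail requires controlling $\mathbb{E}[(\ln(p_\star/\bar p_{f,\dens}))^2]$ using the Gaussian form of $\varphi$ and Assumption H\ref{assum:Db}) with the Samson-type inequality, which in \cite{samson:2000} is stated only for uniformly bounded classes.
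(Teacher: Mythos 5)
Your plan follows essentially the same route as the paper: the van de Geer basic inequality, a concentration bound for the centred empirical process of the functions $g_{p_{f,\dens}}=\tfrac12\ln\frac{p_{f,\dens}+p_\star}{2p_\star}$ obtained by extending Samson's $\Phi$-mixing inequality to classes admitting an integrable envelope and by invoking the Doukhan--Massart--Rio maximal inequality for the expectation, a peeling over the strata $\{2^{k}<I(f)\le 2^{k+1}\}$ whose entropy is controlled polynomially through H\ref{assum:pen}, and finally the bootstrap that first yields $I(\param{n}{})=O_{\PP}(1)$ and then $h^{2}(\widehat{p}_{n},p_{\star})=O_{\PP}(\lambda_n/n)$. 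Three points in your write-up should be corrected before executing it. First, your boundedness claims are reversed: since $\bar p/p_\star\ge 1/2$, the functions $\sqrt{\bar p/p_\star}-1$ and $\ln(\bar p/p_\star)$ are bounded \emph{below} (not above, and certainly not $\norm{\cdot}_\infty\le 1$); it is the \emph{upper} tail, where $p_\star$ is small, that is unbounded, and it is tamed not by the $\bar p$ device but by the envelope coming from H\ref{assum:pen}--H\ref{assum:Db} and the Gaussian noise: on the stratum $I(f)\le M$ one has $|g_{p_{f,\dens}}(\mathbf{Y}_i)|\le CM^{\upsilon}(1+\|\boldsymbol{\epsilon}_i\|)$, an i.i.d.\ envelope with Gaussian moments, which is exactly the hypothesis under which the paper's extension of Samson's inequality (Proposition~\ref{NPHMM:prop:concentration}) applies. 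Second, the additional peeling over Hellinger balls of radius $\delta$ is neither needed nor, as the paper stresses, currently achievable in this setting: no localization of the empirical process in the Hellinger metric is performed (this is precisely why the rate cannot be pushed beyond $n^{-1/4}$), and the global entropy bound on each complexity stratum, computed via the weighted Sobolev embedding of the densities $p_{f,\dens}$, already gives the $(1\vee I(f))\,O_{\PP}(n^{-1/2})$ control which, combined with $\lambda_n n^{-1/2}\to+\infty$, closes the argument. Third, in your displayed inequality the term $\frac{\lambda_n}{n}I(\param{n}{})$ must sit on the left-hand side (it is the penalty of the estimator, not of $\paramstar$); with it on the right you could not deduce $I(\param{n}{})=O_{\PP}(1)$, although your verbal description of the mechanism in the final step is the correct one.
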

Condition \eqref{NPHMM:eq:lambda} implies that the rate of convergence of the Hellinger distance between $\widehat{p}_n$ and the true density $p_{\star}$ is slower than $n^{-1/4}$. The proof of the consistency of $\widehat{p}_n$ relies on the control of the empirical process:
\[
\sup_{f,\nu} \int \frac{1}{2}\ln \left[(p_{f,\nu} + p_\star)/(2p_\star)\right] \mathrm{d}\left(\PP_{n} - \Pr_{\star } \right)\eqsp,
\]
where $\PPim$ is the law of $\bfY_0$ and $\PP_n$ is the empirical distribution of the observations $\{\bfY_k\}_{k=0}^{n-1}$, given for any measurable set $A$ of $\mathbb{R}^{b\dimY}$ by
\[
\PP_{n}(A) \eqdef \frac{1}{n} \sum_{k=0}^{n-1} \1_{A}(\mathbf{Y}_{k})\eqsp.
\] 
A weaker condition on $\lambda_n$ could be obtained with a sharper deviation inequality on the empirical process. For instance, \cite[Theorem 10.6]{vandegeer:2000} estimates the density of a random variable $Y$ using i.i.d. samples and the penalized loglikelihood $p\mapsto \int \log p\ \mathrm{d}\PP_{n}  - \lambda_n I(p)$, where $I(p)= \int_{\mathbb{R}} (p^{(m)}(y))^2\mathrm{d}y$ penalizes the $m$-th derivative of $p$. The proof of \cite[equation (10.34)]{vandegeer:2000} establishes that 
\[
\sup_{p\in A_n(p_{\star})} \frac{\int \ln \left[(p + p_\star)/(2p_\star)\right]\mathrm{d}\left(\PP_{n} - \Pr_{\star } \right) }{1+I(p)+I(p_\star)} =O_{\mathbb{P}}(n^{-2m/(2m+1)})\eqsp,
\]
 where
\[
A_n(p_{\star}) \eqdef \left\{p\;;\;h(p,p_\star)\le n^{-m/(2m+1)}\left[1+I(p)+I(p_\star)\right]\right\}
\]
to obtain $n^{-m/(2m+1)}$ as rate of convergence for $h(\widehat{p}_n,p_\star)$. \cite{Gassiat:2013} also use a localization technique to derive the minimal penalty which ensures the convergence of the estimate of the number of components in a general mixture model. 
In our case, Proposition~\ref{NPHMM:prop:deviation:G} establishes a deviation result on the empirical process on the whole class of functions  $\left\{p_{f,\nu};\ f\in\mathcal{F}, \ \nu \in \mathcal{D}_b \right\}$. We consider a general setting where $\mathcal{F}$, $\mathcal{D}_b$ and the complexity function $I(f)$ are all non specified. Theorem~\ref{NPHMM:th:cons:MPLE} is established under the relatively mild assumptions H\ref{assum:phimix}-\ref{assum:Db}. Hence, the rate $n^{-1/4}$ corresponds to the "worst case" rate.  However, even in a less general context such as in Section~\ref{NPHMM:sec:sobo}, controlling a localized version of the empirical process in order to improve the rate of convergence of $\widehat{p}_n $ remains a difficult problem.

The proof of Theorem~\ref{NPHMM:th:cons:MPLE} relies on a \textit{basic inequality} which provides a simultaneous control of the Hellinger risk $h^{2}(\widehat{p}_{n},p_{\star})$ and of  $I(\param{n}{})$. Define for any density function $p$ on $\mathbb{R}^{b\dimY}$, 
\begin{equation}
\label{eq:defgp}
g_{p} \eqdef \frac{1}{2} \ln \frac{p + p_{\star}}{2p_{\star}}\eqsp.
\end{equation}
By \eqref{NPHMM:eq:fn} and \eqref{NPHMM:eq:MLE}, following  the proof of \cite[Lemma 10.5]{vandegeer:2000}:
\begin{equation}\label{NPHMM:eq:basic}
h^{2}(\widehat{p}_{n},p_{\star}) +4\lambda_{n} n^{-1} I (\param{n}{}) \le 16 \int g_{\widehat{p}_{n}} \rmd(\PP_{n} - \Pr_{\star }) + 4\lambda_{n} n^{-1} I(\paramstar)+\delta_n\eqsp.
\end{equation}
Therefore, a control of $\int g_{\widehat{p}_{n}} \rmd(\PP_{n} - \PPim)$ in the right hand side of \eqref{NPHMM:eq:basic} provides upper bounds for both  $h^{2}(\widehat{p}_{n},p_{\star}) $ and $I(\param{n}{})$. This control is given in Proposition~\ref{NPHMM:prop:deviation:G}.

\begin{proposition}
\label{NPHMM:prop:deviation:G}
Assume H\ref{assum:phimix}-\ref{assum:Db} hold. There exists a positive constant  $c$ such that, for any $\eta>0$, there exist  $A$ and $N$  such that for any  $n\ge N$ and any $x>0$,
\begin{align*}
\mathbb{P}\bigg[ \sup_{f\in\mathcal{F}, \ \dens\in\mathcal{D}_b } \frac{\left|\int g_{p_{f,\dens}} \ \rmd(\PP_n - \PPim)\right| }{1\vee I(f)^{\gamma}}&\ge   c\mathbf{\Phi}\times\left(\sqrt{\frac{  x} {n}}+ \frac{ x }{n}\right)+ \frac{A}{\sqrt{n}} \bigg]\le  \frac{2e^{- \alpha x }}{1-e^{- \alpha x }}\eqsp, 
\end{align*}
where $\gamma \eqdef b\dimY\upsilon + \eta$ and $\alpha \eqdef 2^{-2\gamma}(\gamma-\upsilon)\log(2) = 2^{-2(b\dimY\upsilon + \eta)}\left[(b\dimY-1)\upsilon + \eta\right]\log(2)$.
\end{proposition}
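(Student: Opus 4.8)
The plan is to derive the stated deviation bound from the $\Phi$-mixing concentration machinery of \cite{samson:2000} together with a peeling (slicing) argument over the complexity level $I(f)$. First I would record the key structural properties of $g_{p_{f,\dens}}$: by the definition \eqref{eq:defgp} one has $g_{p}\le \tfrac12\ln\tfrac{p+p_\star}{2p_\star}$, so $g_p$ is bounded above by a constant but is \emph{not} bounded below, and its lower tail is controlled by $\ln p_\star$, which under H\ref{assum:Db} and the Gaussian form \eqref{NPHMM:eq:denseps} of $\denseps$ grows only like $\sum_k\|y_k-f(x_k)\|^2$, hence like $\|f\|_\infty^2$ up to additive terms depending polynomially on $\|\mathbf{y}\|$. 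Using H\ref{assum:pen}, $\|f\|_\infty\le CI(f)^\upsilon$, so on the event where we divide by $1\vee I(f)^\gamma$ with $\gamma=b\dimY\upsilon+\eta>\upsilon$, the normalised functions $g_{p_{f,\dens}}/(1\vee I(f)^\gamma)$ form a class whose envelope is integrable and whose \emph{oscillation} is controlled: this is exactly the gain that makes the ratio sub-Gaussian. I would make this precise by showing that for $f$ with $I(f)\le R$, the function $g_{p_{f,\dens}}$ has $\Esp_\star[\,\mathrm{e}^{t g_{p_{f,\dens}}}\,]<\infty$ for $t$ in a neighbourhood of $0$ uniform in the class, and that $\|g_{p_{f,\dens}}\|_\infty$ (one-sided) is $O(R^{2\upsilon})$ at worst, while the two-sided bound after truncation costs only $O(\log)$ factors.

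Next I would invoke the extension of \cite[Theorem~3]{samson:2000} alluded to in the introduction: for a class of functions that are not uniformly bounded but satisfy the above exponential-moment/one-sided-bound control, one obtains a Talagrand-type concentration inequality for $\sup_{f,\dens}\int g_{p_{f,\dens}}\,\rmd(\PP_n-\PPim)$, with the mixing dependence entering through $\mathbf\Phi=\sum_i(\Phi^X_i)^{1/2}$, yielding fluctuations of order $\mathbf\Phi(\sqrt{x/n}+x/n)$ around the mean, plus the expectation of the supremum. The expectation of the supremum of the empirical process over the (sub)class at complexity level $R$ I would bound by the maximal inequality for dependent sequences of \cite{doukhan:massart:rio:1995}, which after the normalisation by $1\vee I(f)^\gamma$ contributes the $A/\sqrt n$ term (the metric-entropy integral of the normalised class being finite because of the extra $\eta$ in the exponent $\gamma$, i.e. $\gamma>b\dimY\upsilon$ strictly).

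Finally I would run the peeling argument over $I(f)$: decompose $\mathcal F$ into shells $S_j=\{f: 2^{j-1}< 1\vee I(f)\le 2^j\}$ for $j\ge0$. On $S_j$ the denominator $1\vee I(f)^\gamma\asymp 2^{j\gamma}$, while the one-sided sup-norm of $g_{p_{f,\dens}}$ is at most of order $2^{2\upsilon j}$ (from $\|f\|_\infty^2\le C^2 2^{2\upsilon j}$) up to $\log$; applying the concentration bound on each shell with the corresponding scaling and summing the resulting geometric series $\sum_j \mathrm{e}^{-\alpha x}$ produces the factor $\frac{2\mathrm{e}^{-\alpha x}}{1-\mathrm{e}^{-\alpha x}}$, where the exponent $\alpha=\frac{\log 2\,(\gamma-\upsilon)}{2^{2\gamma}}$ is exactly what one gets: the $(\gamma-\upsilon)=(b\dimY-1)\upsilon+\eta$ comes from the mismatch between the normalising power $\gamma$ and the growth power $\upsilon$ of the sup-norm along the shells, and the $2^{-2\gamma}$ from the worst-case variance/sup-norm calibration in the exponential inequality. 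I would choose $A$ and $N$ at the end to absorb the (finite) entropy integral and the lower-order remainder terms uniformly for $n\ge N$. The main obstacle will be the first step: pushing \cite[Theorem~3]{samson:2000} — which is stated for uniformly bounded functions — through to the unbounded class $\{g_{p_{f,\dens}}\}$, i.e. making rigorous the truncation/exponential-moment argument that converts the one-sided sup-norm growth $O(2^{2\upsilon j})$ into a usable effective bound in the mixing concentration inequality, while keeping the $\log$-losses from truncation from degrading the exponent $\alpha$; getting the constants to line up so that precisely the factor $2^{-2\gamma}$ (rather than, say, $2^{-2\gamma}$ times an extra $\log$) appears is the delicate bookkeeping.
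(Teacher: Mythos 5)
Your plan reproduces the architecture of the paper's proof: (i) a concentration inequality for the supremum over the sub-class at complexity level $M$, obtained by extending \cite[Theorem~3]{samson:2000} to unbounded functions; (ii) a bound on the expected supremum via the maximal inequality of \cite{doukhan:massart:rio:1995}; (iii) a peeling over dyadic shells in $I(f)$ whose geometric series yields the factor $\mathrm{e}^{-\alpha x}/(1-\mathrm{e}^{-\alpha x})$ with exactly the exponent $\alpha=\log(2)(\gamma-\upsilon)/2^{2\gamma}$ you describe. That part is right, and your accounting of where $(\gamma-\upsilon)$ and $2^{-2\gamma}$ come from matches the paper's computation of the terms $T_k$.

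There are, however, two genuine gaps and one sign error. First, the boundedness of $g_p$ goes the other way: since $p\ge 0$, the definition \eqref{eq:defgp} gives $g_p\ge\frac{1}{2}\ln\frac{1}{2}$, so $g_p$ is bounded \emph{below} by a universal constant and unbounded \emph{above} (through the ratio $p/p_\star$ when $\|\mathbf{y}\|$ is large); your description is reversed, although the magnitude estimate via $\ln p-\ln p_\star$ and H\ref{assum:pen} still lands in the right place. Second, the step you flag as "the main obstacle" is resolved in the paper not by truncation and exponential moments (with attendant log losses) but by a cleaner device that you do not supply: one bounds $|g(\mathbf{Y}_i)|\le CM^{\upsilon}(1+\|\mathbf{Y}_i\|)\le CM^{\upsilon}(1+\|\paramstar\|_{\infty}+\|\boldsymbol{\epsilon}_i\|)$, and since $\|\paramstar\|_{\infty}$ is deterministic the envelope $\mathbf{U}_i=CM^{\upsilon}(1+\|\boldsymbol{\epsilon}_i\|)$ is an \emph{i.i.d.} sequence with factorial moment growth $\mathbb{E}[\mathbf{U}_i^{2k}]\le k!\nu c^{k-1}$, $\nu=c=CM^{2\upsilon}$; Proposition~\ref{NPHMM:prop:concentration} is proved under exactly this hypothesis and yields the fluctuation term $c\mathbf{\Phi}(\sqrt{nx}+x)M^{\upsilon}$ with no logarithmic degradation. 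Without this observation your "delicate bookkeeping" worry is real and unresolved. Third, the finiteness of the entropy integral is not a consequence of the slack $\eta$: it requires the bracketing bound of Lemma~\ref{NPHMM:prop:bracket:GM}, obtained by embedding the Gaussian mixtures $p_{f,\nu}$ into weighted Sobolev spaces $W^{s',p'}(\mathbb{R}^{b\dimY},\langle\bfy\rangle^{d})$ of arbitrarily high smoothness $s'$ and invoking \cite[Corollary~4]{nickl:potscher:2007}; the integral $\int_0^{\delta}u^{-rb\dimY/s'}\,\rmd u$ converges because $s'$ can be taken large (the densities are $\mathcal{C}^{\infty}$), and $\eta$ only quantifies how close the resulting exponent of $M$ can be pushed to $b\dimY\upsilon$. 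This entropy computation is entirely absent from your plan and is a load-bearing ingredient of the $A/\sqrt{n}$ term.
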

Proposition~\ref{NPHMM:prop:deviation:G} is proven in Section.~\ref{sec:proof:NPHMM:prop:deviation:G}.
 
\begin{proof}[Proof of Theorem~\ref{NPHMM:th:cons:MPLE}]
Since $\upsilon^{-1}>b\dimY$, $\eta>0$ in Proposition~\ref{NPHMM:prop:deviation:G} can be chosen such that $\gamma= b\dimY\upsilon + \eta = 1$. For this choice of $\eta$,   
Proposition \ref{NPHMM:prop:deviation:G} implies that  
\[
\frac{\int g_{\widehat{p}_{n}} \rmd(\PP_{n} - \PPim)}{1\vee I(\param{n}{}) } = O_{\mathbb{P}}(n^{-1/2})\eqsp.
\] 
Combined with \eqref{NPHMM:eq:basic}, this yields
\begin{equation} 
\label{NPHMM:eq:basic:2}
h^{2}(\widehat{p}_{n},p_{\star}) +4\lambda_{n} n^{-1} I (\param{n}{}) \le(1\vee I(\param{n}{}) )O_{\mathbb{P}}(n^{-1/2})  + 4\lambda_{n} n^{-1} I(\paramstar)+\delta_n\eqsp.
\end{equation}
Then, \eqref{NPHMM:eq:basic:2} directly implies that 
\begin{equation*} 
 4\  I (\param{n}{}) \le(1\vee I(\param{n}{}) )O_{\mathbb{P}}(n^{1/2} \lambda_n^{-1} )  + 4  I(\paramstar)+\delta_n n\lambda_n^{-1}\eqsp,
\end{equation*}
which, together with \eqref{NPHMM:eq:lambda}, gives
\[ 
I (\param{n}{}) =O_{\mathbb{P}}(1)\eqsp.
\]
Combining this result with  \eqref{NPHMM:eq:basic:2} again leads to
\begin{equation*}  
h^{2}(\widehat{p}_{n},p_{\star}) +O_{\mathbb{P}}(\lambda_{n} n^{-1})  \le O_{\mathbb{P}}(n^{-1/2})  + 4\lambda_{n} n^{-1} I(\paramstar)+\delta_n\eqsp.
\end{equation*}
This concludes the proof of Theorem \ref{NPHMM:th:cons:MPLE}.
\end{proof}

Theorem~\ref{NPHMM:th:cons:MPLE} shows that $h^{2}(\widehat{p}_{n},p_{\star})$ vanishes as $n$ goes to infinity. However, this does not imply the convergence of $(\param{n}{},\widehat{\nu}_n)$ to $(\paramstar,\dens_{b,\star})$. The convergence of the estimators $(\param{n}{},\widehat{\nu}_n)$ is addressed in the case where the  set $\mathcal{D}_b$ may be written as 
\begin{equation}
\label{NPHMM:eq:defA}
\mathcal{D}_b = \left\{\nu_a ;\ a \in \mathcal{A} \right\}\eqsp,
\end{equation}
where $\mathcal{A}$ is a parameter set not necessarily of finite dimension. The $\delta_n$-MPLE is then given by:
\[
(\param{n}{},\widehat{a}_n) \eqdef \underset{f\in\mathcal{F},\ a \in \mathcal{A}}{\mbox{argmax}^{\delta_n}} \left\{\sum_{k=0}^{n-1} \ln p_{f,\dens_a}\left(\mathbf{Y}_{k} \right) -\lambda_{n} I(f)\right\}\eqsp.
\]
\begin{hypH}
\label{NPHMM:hyp:compacite}
\begin{enumerate}[a)]
\item $\mathcal{A}$ is endowed with a distance $d_{\mathcal{A}}$ such that $\mathcal{A}$ is compact with respect to the topology defined by $d_{\mathcal{A}}$,\label{NPHMM:assum:A}
\item $\mathcal{F}$ is endowed with a metric $d_{\mathcal{F}}$ such that $\mathcal{F}_M\eqdef \left\{f \in\mathcal{F}; \ I(f)\le M \right\}$ is compact for all $M>0$ with respect to the topology defined by $d_{\mathcal{F}}$,\label{NPHMM:assum:Fm}
\item The function $(f,a)\mapsto h^2(p_{f, \dens_a},p_\star)$ is continuous with respect to the topology on $\mathcal{F}\times\mathcal{A}$ induced by the product distance $d$ on $\mathcal{F}\times \mathcal{A}$.
\label{NPHMM:assum:h}
\end{enumerate}
\end{hypH} 
 Corollary~\ref{NPHMM:th:cons:esti} establishes the convergence of $(\param{n}{},\widehat{a}_n)$ to the set  $\mathcal{E}_\star$ defined as:

\begin{equation}
\label{NPHMM:eq:estar}
\mathcal{E}_\star\eqdef\left\{(f,a)\in \mathcal{F} \times \mathcal{A} ;\; p_{f,\nu_a} = p_{\paramstar,\nu_{a_{\star}}}\right\}\eqsp.
\end{equation}
Define for all $(f,a)\in\mathcal{F} \times \mathcal{A}$, $$d\left((f,a), \mathcal{E}_\star\right) = \inf_{(f',a') \in\mathcal{E}_\star} d\left( (f,a),(f',a')\right)\;.$$
\begin{corollary}
\label{NPHMM:th:cons:esti}
Assume H\ref{assum:phimix}-\ref{NPHMM:hyp:compacite} hold for some $\upsilon$ such that $\upsilon b\dimY<1$. Assume also that $\lambda_n$ and $\delta_n$ satisfy
\begin{equation*}
 \lambda_{n} n^{-1} \underset{n\to +\infty}{\longrightarrow} 0 , \ \lambda_{n} n^{-1/2} \underset{n\to +\infty}{\longrightarrow} +\infty\text{ and } \delta_n  = O\left(\frac{\lambda_n}{n}\right)\eqsp.
\end{equation*}
 Then,
\[
d\left((\param{n}{},\widehat{a}_n), \mathcal{E}_\star\right) =o_{\mathbb{P}}(1)\eqsp.
\]
\end{corollary}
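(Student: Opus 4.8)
The plan is to deduce the convergence of $(\param{n}{},\widehat a_n)$ to $\mathcal{E}_\star$ from the rate statement $h^2(\widehat p_n,p_\star)=O_{\PP}(\lambda_n/n)=o_{\PP}(1)$ of Theorem~\ref{NPHMM:th:cons:MPLE}, using the compactness and continuity hypotheses listed just before the corollary. First I would observe that the hypotheses on $\lambda_n,\rho_n$ here are exactly those of Theorem~\ref{NPHMM:th:cons:MPLE} (with $\rho_n=O(\lambda_n/n)$, which is stronger than $\rho_n=O_{\PP}(\lambda_n/n)$), so that theorem applies and gives both $h^2(\widehat p_n,p_\star)\to 0$ in probability and $I(\param{n}{})=O_{\PP}(1)$. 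The latter bound is the crucial extra ingredient: it confines $\param{n}{}$ to the set $\mathcal{F}_M=\{f\in\mathcal{F}:I(f)\le M\}$ with probability arbitrarily close to $1$ for $M$ large, and $\mathcal{F}_M$ is compact for $d_{\mathcal{F}}$. Since $\mathcal{A}$ is compact for $d_{\mathcal{A}}$, the pair $(\param{n}{},\widehat a_n)$ lives, with high probability, in the compact set $\mathcal{F}_M\times\mathcal{A}$.

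Next I would run a standard argmax/M-estimation contradiction argument. Suppose the conclusion fails: then there is $\varepsilon>0$ and a subsequence along which $\PP\big(d((\param{n}{},\widehat a_n),\mathcal{E}_\star)>\varepsilon\big)$ stays bounded below by some $\delta>0$. Fix $M$ with $\PP(I(\param{n}{})>M)<\delta/2$ eventually (possible by the $O_{\PP}(1)$ bound). On the event $\{I(\param{n}{})\le M\}\cap\{d((\param{n}{},\widehat a_n),\mathcal{E}_\star)>\varepsilon\}$, which has probability at least $\delta/2$ infinitely often, the pair $(\param{n}{},\widehat a_n)$ ranges over the set $K_{M,\varepsilon}\eqdef\{(f,a)\in\mathcal{F}_M\times\mathcal{A}:d((f,a),\mathcal{E}_\star)\ge\varepsilon\}$, which is compact (closed subset of the compact $\mathcal{F}_M\times\mathcal{A}$). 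The function $(f,a)\mapsto h^2(p_{f,\dens_a},p_\star)$ is continuous and, by the definition \eqref{NPHMM:eq:estar} of $\mathcal{E}_\star$, it is strictly positive off $\mathcal{E}_\star$; hence on the compact $K_{M,\varepsilon}$ it attains a positive minimum $\kappa(M,\varepsilon)>0$. Therefore on this event $h^2(\widehat p_n,p_\star)=h^2(p_{\param{n}{},\dens_{\widehat a_n}},p_\star)\ge\kappa(M,\varepsilon)>0$, so $\PP(h^2(\widehat p_n,p_\star)\ge\kappa(M,\varepsilon))\ge\delta/2$ infinitely often, contradicting $h^2(\widehat p_n,p_\star)=o_{\PP}(1)$. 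This contradiction establishes $d((\param{n}{},\widehat a_n),\mathcal{E}_\star)=o_{\PP}(1)$.

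The main obstacle, and the only genuinely delicate point, is making the truncation-by-$I$ step rigorous: $\mathcal{F}$ itself is not assumed compact, only its sublevel sets $\mathcal{F}_M$, so one cannot simply take minima over $\mathcal{F}\times\mathcal{A}$, and one must carry the random bound $I(\param{n}{})=O_{\PP}(1)$ carefully through the $\varepsilon$--$\delta$ bookkeeping (choosing $M$ first, depending on $\delta$, then extracting the subsequence). A minor related point is that $\kappa(M,\varepsilon)$ depends on $M$, but that is harmless because $M$ is fixed before the limit in $n$ is taken. Everything else — continuity of $h^2(p_{f,\dens_a},p_\star)$, compactness of $\mathcal{F}_M\times\mathcal{A}$, and strict positivity of $h^2$ off $\mathcal{E}_\star$ — is granted by hypothesis or by the definition of $\mathcal{E}_\star$, so no further analytic work is needed.
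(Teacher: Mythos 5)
Your proposal is correct and follows exactly the route the paper intends: the paper omits the proof, stating only that the corollary is "a direct consequence of Theorem~\ref{NPHMM:th:cons:MPLE} and of the properties of $d_{\mathcal{A}}$ and $d_{\mathcal{F}}$," and your argument supplies precisely those details (using $I(\param{n}{})=O_{\PP}(1)$ to localize on the compact $\mathcal{F}_M\times\mathcal{A}$, then the minimum of the continuous function $h^2(p_{f,\dens_a},p_\star)$ on the compact set away from $\mathcal{E}_\star$). The $\varepsilon$--$\delta$ bookkeeping with $M$ chosen first is handled correctly.
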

Corollary~\ref{NPHMM:th:cons:esti} is a direct consequence of Theorem~\ref{NPHMM:th:cons:MPLE} and of the properties of $d_{\mathcal{A}}$ and $d_{\mathcal{F}}$ and its proof is therefore omitted. The few assumptions on the model allow only to establish the convergence of the estimators $(\param{n}{},\widehat{a}_n)$ to the set $\mathcal{E}_\star$ in Corollary~\ref{NPHMM:th:cons:esti}.

\section{Identifiability when $\mathbb{X}$ is a subset of $\mathbb{R}^m$}
\label{NPHMM:sec:identifiability}
The aim of this section is to characterize the set $\mathcal{E}_\star$ given by \eqref{NPHMM:eq:estar} when $b=1$ and when $b=2$ (the characterization of $\mathcal{E}_\star$ when $b>2$ follows the same lines) with some additional assumptions on the model, on $\mathcal{F}$ and on $\mathcal{D}_b$.  In the sequel, $\nu_{b,\star}$ must satisfy $0<\nu_-\le\nu_{b,\star}\le \nu_+$ for some constants $\nu_-$ and $\nu_+$. It is assumed that $\mathbb{X}$ is a subset of $\mathbb{R}^m$ for some $m\ge 1$ and that $\mu$ is  the Lebesgue measure. For any subset $A$ of $\mathbb{R}^m$, $\overset{\circ}{A}$ stands for the interior of $A$ and $\overline{A}$ for the closure of $A$. Consider the following assumptions on the state-space $\mathbb{X}$.
\begin{hypH}
\label{assum:X}
\begin{enumerate}[a)]
\item \label{assum:X:compact}$\mathbb{X}$ is  non empty, compact and $\overline{\overset{\circ}{\mathbb{X}}} = \mathbb{X}$,
\item \label{assum:X:topo} $\mathbb{X}$ is  arcwise and simply connected.
\end{enumerate}
\end{hypH}

The compactness  implies that $\mathbb{X}$ is closed and that continuous functions on $\mathbb{X}$ are bounded.  By the last assumption of H\ref{assum:X}\ref{assum:X:compact}), the interior of $\mathbb{X}$ is not empty and any element in $\mathbb{X}$ is the limit of elements of the interior of $\mathbb{X}$. Finally, $\mathbb{X}$ is  arcwise and simply connected to ensure topological properties used in the proofs of the identifiability  results below.

A function $f:U\to f(U)\subset \mathbb{R}^\ell$ defined on an open subset $U$ of $\mathbb{R}^m$ is a $\mathcal{C}^1$-diffeomorphism  if its differential function $x\mapsto D_x f$ is continuous and if, for all $x$ in $U$, $\mathrm{rank}(D_xf) = m$. 
A function $f:\mathbb{X} \to f(\mathbb{X})$ is said to be $\mathcal{C}^1$ (resp. a $\mathcal{C}^1$-diffeomorphism) if $f$ is the restriction to $\mathbb{X}$ of a $\mathcal{C}^1$ function (resp. a $\mathcal{C}^1$-diffeomorphism) defined on an open neighborhood of $\mathbb{X}$ in $\mathbb{R}^m$. 
\begin{hypH}
\label{assum:fstar}
$\paramstar$ is a $\mathcal{C}^1$-diffeomorphism from $\mathbb{X}$ to $\paramstar(\mathbb{X})$.
\end{hypH}

H\ref{assum:fstar} might be seen as a restrictive assumption. Nevertheless, when $\ell\ge 2m+1$, by Whitney's embedding theorem (\cite{whitney:1986}) every continuous function from $\mathbb{X}$ to $\mathbb{R}^\ell$ can be approximated by a smooth embedding. In the case $b=1$, Proposition~\ref{NPHMM:prop:nu:phi} discusses the identifiability when $\mathcal{F}$ is a subset of $\mathcal{C}^1$. For all differential function $\phi: \mathbb{X}\to \mathbb{X}$, let $J_{\phi}$ be the determinant of the Jacobian matrix of $\phi$: $J_{\phi}(x) \eqdef \det\left(D_{x} \phi \right)$.

\begin{proposition}[{\bf b=1}]
\label{NPHMM:prop:nu:phi}
Assume that H\ref{assum:Db}-\ref{assum:fstar} hold. Let $f\in\mathcal{C}^1$ and let $\nu\in \mathcal{D}_b$. Then, $p_{f,\dens} = p_{\paramstar,\dens_{1,\star}}$ if and only if $f_\star$ and $f$ have the same image in $\mathbb{R}^\dimY$, $\phi = \paramstar^{-1} \circ f $ is bijective and, for $\mu$ almost every $x\in\mathbb{X}$, $$\dens(x)= |J_{\phi}(x)|\dens_{1,\star}(\phi(x))\eqsp. $$
\end{proposition}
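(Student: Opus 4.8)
I would first observe that the "if" direction is essentially a change-of-variables computation: if $f = \paramstar\circ\phi$ with $\phi$ bijective and $\dens(x) = |J_\phi(x)|\dens_\star(\phi(x))$, then for any test function, pushing forward the measure $\dens(x)\,\mu(\rmd x)$ through $\phi$ recovers $\dens_\star(x')\,\mu(\rmd x')$, and since $f(x) = \paramstar(\phi(x))$ the integrand $\denseps(y_0 - f(x))$ becomes $\denseps(y_0 - \paramstar(\phi(x)))$; hence $p_{f,\dens} = p_{\paramstar,\dens_\star}$ pointwise and the Hellinger distance vanishes. This direction needs only that $\phi$ is a bijection to which the change-of-variables formula applies (so one needs $\phi$ to be $\mathcal{C}^1$ with $\mathcal{C}^1$ inverse, which follows from $f \in \mathcal{C}^1$, $\paramstar$ a $\mathcal{C}^1$-diffeomorphism, and the conclusion — part of what the "only if" direction must establish — that $f$ is itself a diffeomorphism onto its image).

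**The "only if" direction.** Here the hypothesis $h(p_{f,\dens}, p_{\paramstar,\dens_\star}) = 0$ means $p_{f,\dens} = p_{\paramstar,\dens_\star}$ as densities on $\Rset^\dimY$, i.e. the random vectors $f(X) + \epsilon$ and $\paramstar(X_\star) + \epsilon$ have the same law, where $X\sim\dens$, $X_\star\sim\dens_\star$ and $\epsilon$ is standard Gaussian independent of each. The plan is to deconvolve: since the Gaussian characteristic function is everywhere nonzero, equality of the laws of the noisy observations forces equality of the laws of the clean signals $f(X)$ and $\paramstar(X_\star)$ (their characteristic functions agree after dividing out the common Gaussian factor). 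So the pushforward of $\dens\,\mu$ under $f$ equals the pushforward of $\dens_\star\,\mu$ under $\paramstar$; in particular $f(\mathbb{X}) = \paramstar(\mathbb{X})$ up to a null set, and since both are compact (continuous image of compact) and $\dens_\star$ is bounded below, in fact $f(\mathbb{X}) = \paramstar(\mathbb{X})$, giving the "same image" claim. Then set $\phi \eqdef \paramstar^{-1}\circ f : \mathbb{X}\to\mathbb{X}$, which is well-defined and $\mathcal{C}^1$ because $\paramstar^{-1}$ is $\mathcal{C}^1$ on $\paramstar(\mathbb{X})$; the identity of pushforwards becomes: the pushforward of $\dens\,\mu$ by $\phi$ equals $\dens_\star\,\mu$ (after transporting through the diffeomorphism $\paramstar$). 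It remains to show $\phi$ is a bijection and then derive the density formula by change of variables.

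**The core difficulty — bijectivity of $\phi$.** The main obstacle is upgrading "$\phi$ transports the measure $\dens\,\mu$ to $\dens_\star\,\mu$, both densities bounded away from $0$ and $\infty$" to "$\phi$ is a bijection." This is where the topological hypotheses H\ref{assum:X} (compact, $\overline{\overset{\circ}{\mathbb{X}}} = \mathbb{X}$, arcwise and simply connected) and the $\mathcal{C}^1$ regularity enter. The argument I would run: first, $\phi$ must be surjective onto $\mathbb{X}$ up to the complement of a $\mu$-null set, but since $\dens_\star \ge \nu_- > 0$ and $\phi(\mathbb{X})$ is compact, $\phi(\mathbb{X}) = \mathbb{X}$. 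Next, the measure-transport identity forces $|J_\phi| > 0$ $\mu$-a.e. — otherwise the image of the critical set would have to absorb positive $\dens_\star\,\mu$-mass while having zero $\dens\,\mu$-preimage measure near it, contradicting boundedness (more carefully: $\dens_\star(\phi(x))|J_\phi(x)|$, the density of the pushforward computed via the area/coarea formula with multiplicity, must equal $\dens(x) \ge \nu_- $, which is impossible where $J_\phi$ vanishes on a positive-measure set — one invokes Sard's theorem to control the measure of critical values). So $\phi$ is a local $\mathcal{C}^1$-diffeomorphism on a full-measure open set, hence (using $\overline{\overset{\circ}{\mathbb{X}}} = \mathbb{X}$) an open map on the interior; being an open surjection of a compact connected set with the path/simple-connectedness assumptions, it is a covering map onto $\overset{\circ}{\mathbb{X}}$, and simple connectedness forces the covering to be trivial of degree one, i.e. injective. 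Finally, injectivity plus the transport identity yields, by the standard change-of-variables formula for $\mathcal{C}^1$-diffeomorphisms, $\dens(x) = |J_\phi(x)|\,\dens_\star(\phi(x))$ for $\mu$-a.e. $x$, completing the proof. I expect the covering-space / degree argument making rigorous use of simple connectedness to be the delicate step, and it is likely the reason H\ref{assum:X}\ref{assum:X:topo}) was imposed; the authors may instead argue directly via an injectivity-from-local-injectivity-and-properness lemma, but some such topological input is unavoidable since local diffeomorphisms of, e.g., an annulus need not be injective.
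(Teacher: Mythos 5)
Your proposal follows essentially the same route as the paper: deconvolve the Gaussian noise to equate the laws of $f(X'_0)$ and $\paramstar(X_0)$, use $\nu\ge\nu_->0$ and the density of $\overset{\circ}{\mathbb{X}}$ in $\mathbb{X}$ to identify the images, apply the area formula with multiplicity to force $|J_{\phi}|>0$, and conclude bijectivity from a global-inversion/covering argument resting on compactness and simple connectedness (the paper cites Ambrosetti--Prodi, which is exactly the ``injectivity from local invertibility plus properness'' lemma you anticipate as an alternative to an explicit covering-space argument). The one step you leave loose is the passage from ``$|J_{\phi}|>0$ $\mu$-a.e.'' to ``local diffeomorphism everywhere'' (a full-measure set need not be open, and a.e.\ local invertibility is not enough for the covering argument): the paper obtains the quantitative identity $\dens_{\star}(x)=\sum_{x'\in\phi^{-1}(\{x\})}\dens(x')/|J_{\phi}(x')|$ from the area formula, which together with $\nu_-\le\dens$ and $\dens_\star\le\nu_+$ forces $|J_{\phi}|\ge\nu_-/\nu_+$ a.e., and then continuity of $J_{\phi}$ and $\overline{\overset{\circ}{\mathbb{X}}}=\mathbb{X}$ upgrade this to all of $\mathbb{X}$ --- a repair you could make with the ingredients you already have in hand.
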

The proof of Proposition~\ref{NPHMM:prop:nu:phi} is given in Section~\ref{NPHMM:sec:proofs:prop:nu:phi}. 

\begin{remark}
\label{NPHMM:rem:b1}
Proposition~\ref{NPHMM:prop:nu:phi} states that $(f,\nu)$ is related to $(\paramstar,\nu_{1,\star})$ through the bijective state-space transformation $\phi$. In the particular case where $\mathbb{X}=[0,1]$ ($m=1$), Proposition~\ref{NPHMM:prop:nu:phi} implies a sharper result. Assume that $\mathcal{D}_1 = \{\nu_{1,\star}=1\}$ ($\nu_{1,\star}$ is the uniform distribution density and is known). Then, Proposition \ref{NPHMM:prop:nu:phi} implies the existence of a $\mathcal{C}^1$ and bijective function $\phi$ satisfying $f = \paramstar\circ\phi$ and $ |J_\phi| = 1$. Hence, $\phi: x \mapsto x$ or $\phi: x\mapsto 1-x$ which are the two isometric transformations of $[0,1]$. 

This cannot be extended to the case $m>1$ where $|J_\phi| = 1$ does not necessarily imply that $\phi$ is isometric but only that $\phi$ preserves volumes. 
\end{remark}
Proposition~\ref{NPHMM:prop:ident:Markov} establishes the identifiability of the model when $b=2$.  In this case, $\nu_{2,\star}$ can be written $\nu_{2,\star}(x,x') =\nu_\star(x)q_\star(x,x')$ where $q_{\star}$ is a transition density with (unique) stationary probability density $\dens_{\star}$. For any transition density $q$ on $\mathbb{X}^2$ satisfying 
\begin{equation}
\label{NPHMM:eq:cond:q}
\text{for all } x,x'\in\mathbb{X}\;,0<q_{-}\le   q(x,x') \le q_{+} \;,
\end{equation}
there exists a stationary density $\nu$ associated with $q$  satisfying, for all $x \in\mathbb{X}$, $q_{-}\le   \nu(x ) \le q_{+}$. Denote by $\dens_q$ this density. 

\begin{proposition}[{\bf b=2}]
\label{NPHMM:prop:ident:Markov}
Assume that H\ref{assum:X} and H\ref{assum:fstar} hold. Let $f\in\mathcal{C}^1$ and $q$ be a transition density satisfying \eqref{NPHMM:eq:cond:q}. Let $\dens_2(x,x') =\dens_q(x)q(x,x')$. Then, $p_{f,\dens_2} = p_{\paramstar,\dens_{2,\star}}$ if and only if $f_\star$ and $f$ have the same image in $\mathbb{R}^\dimY$, $\phi = \paramstar^{-1} \circ f $ is bijective and $\mu\otimes\mu$ almost everywhere in $\mathbb{X}^2$,  
\begin{equation}
\label{NPHMM:eq:Q:phi}
q(x,x')= |J_{\phi}(x')| q_\star(\phi(x),\phi(x'))\;.
\end{equation} 
\end{proposition}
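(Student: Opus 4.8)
The plan is to prove the two implications separately, reusing the mechanism behind Proposition~\ref{NPHMM:prop:nu:phi}. That mechanism is the injectivity of convolution by a Gaussian kernel: because the Fourier transform of $\varphi$ never vanishes, convolution with $\varphi\otimes\varphi$ is one-to-one on finite signed measures on $\mathbb{R}^{2\dimY}$, so that $h(p_{f,\dens_2},p_{\paramstar,\dens_{\star,2}})=0$ is equivalent to the statement that the image of $\dens_2\,\mu^{\otimes 2}$ under $(x_0,x_1)\mapsto(f(x_0),f(x_1))$ equals the image of $\dens_{\star,2}\,\mu^{\otimes 2}$ under $(x_0,x_1)\mapsto(\paramstar(x_0),\paramstar(x_1))$. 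I would record this reformulation first and then use it in both directions.

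For the ``only if'' part, I would start from the a.e.\ identity $p_{f,\dens_2}=p_{\paramstar,\dens_{\star,2}}$ and integrate out $y_1$. Since $\varphi$ integrates to one and since $\dens_q$, resp.\ $\dens_\star$, is the first marginal of $\dens_2$, resp.\ $\dens_{\star,2}$ (because $q$ and $q_\star$ are transition densities), this gives $p_{f,\dens_q}=p_{\paramstar,\dens_\star}$, i.e.\ $h(p_{f,\dens_q},p_{\paramstar,\dens_\star})=0$. As $\dens_q$ is bounded between $q_-$ and $q_+$, Proposition~\ref{NPHMM:prop:nu:phi} applies with $\dens=\dens_q$ and yields that $f$ and $\paramstar$ have the same image, that $\phi=\paramstar^{-1}\circ f$ is a $\mathcal{C}^1$ bijection of $\mathbb{X}$ (composition of two $\mathcal{C}^1$ maps), and that $\dens_q=|J_\phi|\,\dens_\star\circ\phi$ $\mu$-a.e.; in particular $|J_\phi|$ is bounded away from $0$ a.e.\ since $\dens_q\ge q_-$ and $\dens_\star\le\nu_+$. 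Composing the equality of image measures above with $\paramstar^{-1}$ in each coordinate and using $f=\paramstar\circ\phi$ and $\phi(\mathbb{X})=\mathbb{X}$, I would deduce that the image of $\dens_2\,\mu^{\otimes 2}$ under $(x_0,x_1)\mapsto(\phi(x_0),\phi(x_1))$ equals $\dens_{\star,2}\,\mu^{\otimes 2}$. Changing variables $u_i=\phi(x_i)$ — legitimate via the same area-formula argument as in the proof of Proposition~\ref{NPHMM:prop:nu:phi}, since $|J_\phi|>0$ a.e.\ — this gives $\mu^{\otimes 2}$-a.e.\ that $\dens_2(x_0,x_1)=|J_\phi(x_0)|\,|J_\phi(x_1)|\,\dens_{\star,2}(\phi(x_0),\phi(x_1))$; writing $\dens_2=\dens_q\cdot q$, $\dens_{\star,2}=\dens_\star\cdot q_\star$, substituting $\dens_q=|J_\phi|\,\dens_\star\circ\phi$ and cancelling the strictly positive factor $|J_\phi(x_0)|\,\dens_\star(\phi(x_0))$ produces exactly \eqref{NPHMM:eq:Q:phi}.

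For the ``if'' part, assume $f=\paramstar\circ\phi$ with $\phi$ a bijection of $\mathbb{X}$ and that \eqref{NPHMM:eq:Q:phi} holds. I would first check that $x\mapsto|J_\phi(x)|\,\dens_\star(\phi(x))$ is a stationary density of $q$: integrating it against $q(\cdot,x')$, applying \eqref{NPHMM:eq:Q:phi} and then the change of variables $u=\phi(x)$ turns the integral into $|J_\phi(x')|\int\dens_\star(u)\,q_\star(u,\phi(x'))\,\mu(\rmd u)=|J_\phi(x')|\,\dens_\star(\phi(x'))$ by stationarity of $\dens_\star$ for $q_\star$. The minorisation $q\ge q_->0$ being a Doeblin condition, the stationary density of $q$ is unique (as in the Remark following H\ref{assum:phimix}), so $\dens_q=|J_\phi|\,\dens_\star\circ\phi$. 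Plugging this, $f=\paramstar\circ\phi$ and \eqref{NPHMM:eq:Q:phi} into the integral \eqref{NPHMM:eq:pf} defining $p_{f,\dens_2}$ and changing variables $u_i=\phi(x_i)$ makes every Jacobian factor cancel and returns $p_{\paramstar,\dens_{\star,2}}(\mathbf{y})$, hence $h(p_{f,\dens_2},p_{\paramstar,\dens_{\star,2}})=0$.

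The step I expect to be the main obstacle is the change of variables through $\phi$: a priori $\phi$ is only a $\mathcal{C}^1$ bijection and need not be a $\mathcal{C}^1$-diffeomorphism, so before invoking the area formula one must rule out the set $\{J_\phi=0\}$. This is exactly where the identity $\dens_q=|J_\phi|\,\dens_\star\circ\phi$ coming from Proposition~\ref{NPHMM:prop:nu:phi}, together with $\dens_q\ge q_->0$ and $\dens_\star\le\nu_+$, is needed to show that $|J_\phi|$ is bounded below. Apart from this point and its two-variable counterpart for $(\phi,\phi)$, the argument amounts to carefully carrying out a two-dimensional change of variables on top of the $b=1$ result.
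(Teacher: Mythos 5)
Your proposal is correct and follows essentially the same route as the paper: marginalize out the second observation to reduce to the $b=1$ case (giving equality of images, bijectivity of $\phi$, and $\dens_q=|J_\phi|\,\dens_\star\circ\phi$ via Proposition~\ref{NPHMM:prop:nu:phi}), use Gaussian injectivity to equate the joint laws of $(\phi(X'_0),\phi(X'_1))$ and $(X_0,X_1)$, and conclude by the same change-of-variables argument as in the $b=1$ proof. Your write-up is in fact more explicit than the paper's (which compresses the final step into ``following the proof of Proposition~\ref{NPHMM:prop:nu:phi}''), and your verification of the converse via uniqueness of the stationary density under the Doeblin minorisation is a sound way to handle what the paper dismisses as straightforward.
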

Proposition~\ref{NPHMM:prop:ident:Markov} is proved in Section~\ref{NPHMM:sec:proof:ident:Markov}.
\begin{corollary}
\label{NPHMM:cor:markov}
Assume that the same assumptions as in Proposition~\ref{NPHMM:prop:ident:Markov} hold. Assume in addition that $q_\star$ and $q$ are of the form:
\begin{equation*}
q_\star(x,x') = c_\star(x)\rho_{\star}(||x-x'||)\eqsp,\quad
q (x,x') = c(x)\rho(||x-x'||)\eqsp,
\end{equation*}
where $\rho$ and $\rho_\star$ are two continuous functions  defined on $\mathbb{R}_+$. If in addition $\rho_\star$ is one-to-one then, $p_{f,\nu_2} = p_{\paramstar,\nu_{2,\star}}$ if and only if $f_\star$ and $f$ have the same image in $\mathbb{R}^\dimY$, $\phi = \paramstar^{-1} \circ f $  is an isometry on $\mathbb{X}$ and $q=q_\star$.
\end{corollary}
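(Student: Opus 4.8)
The plan is to derive the statement from Proposition~\ref{NPHMM:prop:ident:Markov} by exploiting the radial form of $q_\star$ and $q$ together with the injectivity of $\rho_\star$.

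\emph{Reduction to a scalar identity.} Assuming $h(p_{f,\nu_2},p_{\paramstar,\nu_{\star,2}})=0$, I would apply Proposition~\ref{NPHMM:prop:ident:Markov} to get that $\paramstar$ and $f$ have the same image, that $\phi=\paramstar^{-1}\circ f$ is bijective, and that \eqref{NPHMM:eq:Q:phi} holds $\mu\otimes\mu$-a.e. Since $\paramstar$ is a $\mathcal{C}^1$-diffeomorphism (H\ref{assum:fstar}) and $f\in\mathcal{C}^1$, $\phi$ is $\mathcal{C}^1$; moreover $c_\star,c$ are continuous and, by \eqref{NPHMM:eq:cond:q}, may be taken positive with $\rho_\star,\rho>0$ on the compact set of distances realised in $\mathbb{X}$. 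Both sides of \eqref{NPHMM:eq:Q:phi} being continuous, the identity holds on all of $\mathbb{X}^2$ (as $\overline{\overset{\circ}{\mathbb{X}}}=\mathbb{X}$). Evaluating at $x'=x$ gives $|J_\phi(x)|c_\star(\phi(x))=\kappa\, c(x)$ with $\kappa\eqdef\rho(0)/\rho_\star(0)>0$, and substituting back and cancelling $c(x)$ leaves
\[
\rho(\|x-x'\|)=\kappa\,\rho_\star(\|\phi(x)-\phi(x')\|)\eqsp,\qquad(x,x')\in\mathbb{X}^2\eqsp.
\]

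\emph{$\phi$ is an isometry.} Next, since $\rho_\star$ is continuous and injective on the interval $\{\|x-x'\|:x,x'\in\mathbb{X}\}=[0,D]$, $D\eqdef\mathrm{diam}\,\mathbb{X}>0$, it is strictly monotone with continuous inverse, so the displayed identity forces $\|\phi(x)-\phi(x')\|=\Theta(\|x-x'\|)$ with $\Theta\eqdef\rho_\star^{-1}(\rho(\cdot)/\kappa)$ continuous and $\Theta(0)=0$. Letting $t\downarrow 0$ in $\|\phi(x+tv)-\phi(x)\|/t=\Theta(t)/t$ for $x$ in the interior of $\mathbb{X}$ and $v$ a unit vector would show $\|D\phi(x)v\|$ to be a constant $\lambda=\lim_{t\downarrow0}\Theta(t)/t$, independent of $x$ and $v$; hence $D\phi(x)^{\top}D\phi(x)=\lambda^2 I$ on $\mathbb{X}$, with $\lambda>0$ (else $\phi$ is constant, contradicting bijectivity), i.e. $D\phi/\lambda\in O(m)$ everywhere. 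I would then invoke the rigidity of equidimensional $\mathcal{C}^1$ maps with orthogonal-up-to-scale differential, together with the connectedness and simple connectedness of $\mathbb{X}$ (H\ref{assum:X}), to conclude that $\phi$ is the restriction to $\mathbb{X}$ of a similarity $x\mapsto\lambda Ax+b$, $A\in O(m)$. Comparing diameters in $\phi(\mathbb{X})=\mathbb{X}$ forces $\lambda=1$, so $\phi$ is an isometry of $\mathbb{X}$ and $|J_\phi|\equiv 1$.

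\emph{$q=q_\star$ and the converse.} With $|J_\phi|\equiv 1$ and $\|\phi(x)-\phi(x')\|=\|x-x'\|$, \eqref{NPHMM:eq:Q:phi} becomes $q(x,x')=c_\star(\phi(x))\rho_\star(\|x-x'\|)$; integrating in $x'$ and using that $q(x,\cdot)$ and $q_\star(x,\cdot)$ are probability densities with the same normalising factor $\bigl(\int_{\mathbb{X}}\rho_\star(\|x-x'\|)\mu(\mathrm{d}x')\bigr)^{-1}$ yields $c_\star\circ\phi=c_\star$, hence $q=q_\star$. For the converse I would simply check the hypotheses of Proposition~\ref{NPHMM:prop:ident:Markov}: if $\phi=\paramstar^{-1}\circ f$ is an isometry of $\mathbb{X}$ and $q=q_\star$, then $|J_\phi|\equiv1$, the same normalisation argument gives $c_\star\circ\phi=c_\star$, so $q(x,x')=c_\star(x)\rho_\star(\|x-x'\|)=c_\star(\phi(x))\rho_\star(\|\phi(x)-\phi(x')\|)=|J_\phi(x')|q_\star(\phi(x),\phi(x'))$, while $f(\mathbb{X})=\paramstar(\mathbb{X})$ and $\phi$ is bijective; Proposition~\ref{NPHMM:prop:ident:Markov} then gives $h(p_{f,\nu_2},p_{\paramstar,\nu_{\star,2}})=0$.

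\emph{Main obstacle.} I expect the hard part to be the geometric rigidity step: upgrading ``$D\phi$ is pointwise a fixed scalar multiple of an orthogonal matrix'' to ``$\phi$ is a global similarity of $\mathbb{X}$'' under only $\mathcal{C}^1$ regularity and without assuming $\overset{\circ}{\mathbb{X}}$ connected. This is exactly where the arcwise and simple connectedness of $\mathbb{X}$ in H\ref{assum:X} should be used; everything else is bookkeeping with the radial structure and the normalisations.
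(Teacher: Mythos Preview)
There is a genuine gap in your ``Reduction to a scalar identity'' step. In \eqref{NPHMM:eq:Q:phi} the Jacobian factor is $|J_\phi(x')|$, not $|J_\phi(x)|$. Evaluating at $x'=x$ indeed gives $\kappa\,c(x)=|J_\phi(x)|\,c_\star(\phi(x))$, but substituting this back into \eqref{NPHMM:eq:Q:phi} only yields
\[
|J_\phi(x)|\,\rho(\|x-x'\|)=\kappa\,|J_\phi(x')|\,\rho_\star(\|\phi(x)-\phi(x')\|)\eqsp,
\]
not the clean identity you wrote. Until you know that $|J_\phi|$ is constant, you cannot conclude that $\|\phi(x)-\phi(x')\|$ depends on $(x,x')$ only through $\|x-x'\|$, so your map $\Theta$ is not well defined and the whole differential argument that follows is blocked. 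The paper closes this gap in one line: the right-hand side of the displayed identity is symmetric in $(x,x')$, so $|J_\phi(x)|/|J_\phi(x')|$ equals its own reciprocal, hence $|J_\phi|$ is constant on $\mathbb{X}$; bijectivity of $\phi:\mathbb{X}\to\mathbb{X}$ together with the change-of-variables formula then forces this constant to be $1$.

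Once $|J_\phi|\equiv 1$ is in hand, your scalar identity does hold and from there your route and the paper's diverge. You pass to the differential, obtain $D\phi(x)^{\top}D\phi(x)=\lambda^2 I$, and invoke a $\mathcal{C}^1$ rigidity statement on a possibly non-convex domain---which you rightly flag as the hard step---followed by a diameter comparison to get $\lambda=1$. The paper avoids all of this: writing $F=\rho_\star^{-1}\circ\rho$, the identity $\|\phi(x)-\phi(x')\|=F(\|x-x'\|)$ says that $\phi$ sends the sphere $S(x_0,d)$ onto $S(\phi(x_0),F(d))$ (onto by compactness and connectedness), hence small balls onto balls; since $|J_\phi|\equiv 1$, $\phi$ preserves volume, so $F(d)=d$ for all small $d$. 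Thus $\phi$ is a local isometry, and arcwise connectedness of $\mathbb{X}$ upgrades this to a global one. This argument is elementary and uses only the volume constraint already obtained, whereas your approach requires an external Liouville-type lemma and a separate argument for $\lambda=1$.
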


\section{Application when $\mathcal{F}$ is a Sobolev class of functions}
\label{NPHMM:sec:sobo}
In this section, $\mathbb{X}$ is a subset of $\mathbb{R}^m$, $m\ge 1$ and the results of Section~\ref{NPHMM:sec:consistency} and Section~\ref{NPHMM:sec:identifiability} are applied to a specific class of functions $\mathcal{F}$ with an example of complexity function $I$ satisfying H\ref{assum:pen} and the compacity assumption  H\ref{NPHMM:hyp:compacite}-\ref{NPHMM:assum:Fm}). Let $p\ge 1$, define
 \[
\rmL^{p}\eqdef\left\{f:\mathbb{X}\to\Rset^{\dimY} \ ; \ \norminf{f}{\rmL^{p}}^{p} =\int_{\mathbb{X}} \norm{f(x)}^{p} \mu(\rmd x) < \infty\right\}\eqsp.
\]
For any $f:\mathbb{X}\to \Rset^{\dimY}$ and any $j \in\{1,\cdots,\dimY\}$, the $j^{th}$ component of $f$ is denoted by $f_{j}$. For any vector $\alpha \eqdef \{\alpha_{i}\}_{i=1}^{\dimK}$ of non-negative integers, we write $|\alpha| \eqdef \sum_{i=1}^{\dimK}\alpha_{i}$ and $D^{\alpha}f : \mathbb{X} \to \Rset^{\dimY} $ for the vector of partial derivatives of order $\alpha$ of $f$  in the sense of distributions. Let $s\in\Nset$ and $W^{s,p}$ be the Sobolev space on $\mathbb{X}$ with parameters $s$ and $p$, {\em i.e.},
\begin{equation}
\label{NPHMM:eq:sobolev}
W^{s,p} \eqdef \left\{f\in \rmL^{p};\; D^{\alpha}f\in \mathrm{L}^{p}, \alpha\in\Nset^{\dimK}\; \mbox{and}\;|\alpha|\leq s\right\}\eqsp.
\end{equation}
$W^{s,p}$ is endowed with the norm $\norminf{\cdot}{W^{s,p}}$ defined, for any $f \in W^{s,p}$, by
\begin{equation}
\label{eq:sobo:norm}
\norminf{f}{W^{s,p}} \eqdef\left(\sum\limits_{0\le \vert \alpha\vert \le s} \norminf{D^{\alpha}f}{\rmL^{p}}^{p}\right)^{1/p}\eqsp.
\end{equation}
For any $j \in\{1,\cdots,\dimY\}$ and $f\in W^{s,p}$, $f_{j}$ belongs to $W^{s,p}(\mathbb{X},\Rset)$, the Sobolev space of real-valued functions with parameters $s$ and $p$. For all $k,q\ge 0$, define $\mathcal{C}^{k}(\mathbb{X},\mathbb{R}^q)$, the set of functions $f : \mathbb{X} \to \mathbb{R}^{q}$ which, together with all their partial derivatives $D^{\alpha}f$ of orders $|\alpha|\le k$  are continuous on $\mathbb{X}$. For any $f\in \mathcal{C}^{k}(\mathbb{X},\mathbb{R}^q)$ define
\[
\norminf{f}{\mathcal{C}^{k}(\mathbb{X},\mathbb{R}^q)} \eqdef \max_{0\le |\alpha| \le k} \ \sup_{x\in \mathbb{X}} \left|D^{\alpha}f(x)\right|\eqsp.
\]
In the particular case $q = \dimY$, write $\mathcal{C}^{k} \eqdef \mathcal{C}^{k}(\mathbb{X},\mathbb{R}^{\dimY})$.
The results of Section~\ref{NPHMM:sec:consistency} and Section~\ref{NPHMM:sec:identifiability} can be applied to the class $\mathcal{F} = W^{s,p}$ under the following assumption.
\begin{hypH}
\label{assum:X:Lip}
$\mathbb{X}$ has a locally Lipschitz boundary.
\end{hypH}
H\ref{assum:X:Lip} means that all $x$ on the boundary of $\mathbb{X}$ has a neighbourhood whose intersection with the boundary of $\mathbb{X}$ is the graph of a Lipschitz function.  

Let $k\ge 0$, by \cite[Theorem 6.3]{adams:fournier:2003}, if $s > \dimK /p +k$ and if H\ref{assum:X}-\ref{assum:X:compact}) and H\ref{assum:X:Lip} hold,  $W^{s,p}(\mathbb{X},\Rset)$ is compactly embedded into $\left(\mathcal{C}^{k}(\mathbb{X},\mathbb{R}), \norminf{\cdot}{\mathcal{C}^{k}(\mathbb{X},\mathbb{R})}\right)$. Arguing component by component, $W^{s,p}$ is compactly embedded into $\mathcal{C}^{k}$. Moreover, the identity function $id: W^{s,p} \to \mathcal{C}^{k}$ being linear and continuous, there exists a positive coefficient $\kappa$ such that, for any $f \in W^{s,p}$,
\begin{equation}
\label{eq:def:kappa}
\norminf{f}{\mathcal{C}^{k}} \le \kappa \norminf{f}{W^{s,p}}\eqsp.
\end{equation}  
Then, if $s>m/p+k$, for any $f \in\mathcal{F} =  W^{s,p}$,
\begin{equation}
\norminf{f}{\infty} \le \kappa \norminf{f}{W^{s,p}}\eqsp.
\end{equation}  
In the following, $d_{\mathcal{C}^k}$ is the usual distance on $\mathcal{C}^k$ associated with $\norminf{\cdot}{\mathcal{C}^{k}} $. If $\mathcal{F} = W^{s,p}$ and if the complexity function is defined by $I(f) =  \norminf{f}{W^{s,p}}^{1/\upsilon}$ with $\upsilon b\dimY<1$, then H\ref{assum:pen} holds and Theorem~\ref{NPHMM:th:cons:MPLE} can be applied. Moreover, by \cite[Theorem 6.3]{adams:fournier:2003}, the subspace $\mathcal{F}_M$, $M\ge1$ are quasi-compact in $\mathcal{C}^k$ and H\ref{NPHMM:hyp:compacite}-\ref{NPHMM:assum:Fm}) holds. Let $d_{\mathcal{A}}$ be a metric on the space $\mathcal{A}$ introduced in \eqref{NPHMM:eq:defA} such that H\ref{NPHMM:hyp:compacite}-\ref{NPHMM:assum:A}) holds and that, for $\mu\otimes\mu$ almost every $(x,x')\in\mathbb{X}^2$, $a\mapsto \dens_a(x,x')$ is continuous. By the dominated convergence theorem, this implies that H\ref{NPHMM:hyp:compacite}-\ref{NPHMM:assum:h}) holds. Define
\[\mathcal{F}_\star \eqdef  \left\{f \in W^{s,p};\ \mbox{there exists } a \in \mathcal{A}\mbox{ such that } (f,a)\in \mathcal{E}_\star \right\}\;.\]
Then, Proposition~\ref{NPHMM:consit:sobo1} is a direct application of Corollary~\ref{NPHMM:th:cons:esti}.
\begin{proposition}[{\bf $\mathbf{\mathcal{F} = W^{s,p}, \ s>m/p+k, \ k\ge0}$}]
\label{NPHMM:consit:sobo1} 
 Assume that H\ref{assum:phimix}, H\ref{assum:Db}, H\ref{assum:X}\ref{assum:X:compact}) and  H\ref{assum:X:Lip} hold. Assume also that $I(f) =  \norminf{f}{W^{s,p}}^{1/\upsilon}$ for some $\upsilon$ such that $\upsilon b\dimY<1$ and that $\lambda_n$ and $\delta_n$ satisfy
\begin{equation*}
 \lambda_{n} n^{-1} \underset{n\to +\infty}{\longrightarrow} 0 , \ \lambda_{n} n^{-1/2} \underset{n\to +\infty}{\longrightarrow} +\infty\text{ and } \delta_n  = O\left(\frac{\lambda_n}{n}\right)\eqsp.
\end{equation*}
Then,
\[
d_{\mathcal{C}^k}\left( \param{n}{} , \mathcal{F}_\star\right) =o_{\mathbb{P}}(1)\eqsp.
\]
\end{proposition}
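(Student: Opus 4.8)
The plan is to deduce Proposition~\ref{NPHMM:consit:sobo1} from Corollary~\ref{NPHMM:th:cons:esti} by verifying that, under the stated hypotheses, all the structural assumptions required to apply Corollary~\ref{NPHMM:th:cons:esti} are met when $\mathcal{F} = W^{s,p}$ with $s > m/p + k$, $k\ge 0$, and $I(f) = \norminf{f}{W^{s,p}}^{1/\upsilon}$. First I would check H\ref{assum:pen}: by the Sobolev embedding $W^{s,p}\hookrightarrow \mathcal{C}^k$ (valid under H\ref{assum:X}\ref{assum:X:compact}) and H\ref{assum:X:Lip} when $s>m/p+k$), inequality~\eqref{eq:def:kappa} gives $\norminf{f}{\infty}\le \kappa\norminf{f}{W^{s,p}} = \kappa\, I(f)^{\upsilon}$, so H\ref{assum:pen} holds with $C=\kappa$ and this $\upsilon$; since $\upsilon b\dimY<1$ is assumed, the hypothesis $b\dimY\upsilon<1$ of Theorem~\ref{NPHMM:th:cons:MPLE} and Corollary~\ref{NPHMM:th:cons:esti} is satisfied. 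Assumptions H\ref{assum:phimix} and H\ref{assum:Db} are assumed directly, and the conditions on $\lambda_n$, $\rho_n$ match those of Corollary~\ref{NPHMM:th:cons:esti}.

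Next I would verify the topological requirements of Corollary~\ref{NPHMM:th:cons:esti}. Take $d_{\mathcal{F}} = d_{\mathcal{C}^k}$ the metric induced by $\norminf{\cdot}{\mathcal{C}^k}$; then $\mathcal{F}_M = \{f\in W^{s,p}: \norminf{f}{W^{s,p}}\le M^{\upsilon}\}$ is the image of a closed ball of $W^{s,p}$, which by the \emph{compact} embedding of \cite[Theorem~6.3]{adams:fournier:2003} is relatively compact in $\mathcal{C}^k$; one checks it is closed in $\mathcal{C}^k$ (a $\mathcal{C}^k$-limit of functions with $W^{s,p}$-norm $\le M^\upsilon$ still has $W^{s,p}$-norm $\le M^\upsilon$, using weak-$*$ compactness of the ball of $L^p$ for $p>1$, or lower semicontinuity of the Sobolev norm), hence $\mathcal{F}_M$ is compact in $\mathcal{C}^k$ for every $M>0$. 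For $\mathcal{A}$: choose $d_{\mathcal{A}}$ so that $\mathcal{A}$ is compact and $a\mapsto \nu_a(x,x')$ is continuous for $\mu\otimes\mu$-a.e. $(x,x')$, as in the discussion preceding the statement. Finally, the continuity of $(f,a)\mapsto h^2(p_{f,\nu_a},p_\star)$ with respect to the product topology on $\mathcal{F}\times\mathcal{A}$ follows from the dominated convergence theorem: if $f_j\to f$ in $\mathcal{C}^k\supset\mathcal{C}^0$ then $\varphi(y_k-f_j(x_k))\to\varphi(y_k-f(x_k))$ pointwise and is uniformly bounded by $\sup\varphi$, while $\nu_{a_j}\to\nu_a$ a.e. with the uniform bound $\nu_+$ from H\ref{assum:Db}, so $p_{f_j,\nu_{a_j}}\to p_{f,\nu_a}$ pointwise; a further dominated-convergence argument (the integrand $(p^{1/2}-p_\star^{1/2})^2$ is dominated using that all $p_{f,\nu}$ integrate to $1$, via Scheffé's lemma) yields $h^2(p_{f_j,\nu_{a_j}},p_\star)\to h^2(p_{f,\nu_a},p_\star)$.

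With all hypotheses of Corollary~\ref{NPHMM:th:cons:esti} verified, I conclude $d\big((\param{n}{},\widehat a_n),\mathcal{E}_\star\big) = o_{\mathbb{P}}(1)$ for the product distance $d = d_{\mathcal{C}^k}\times d_{\mathcal{A}}$. Since $d_{\mathcal{C}^k}(\param{n}{},\mathcal{F}_\star)\le d\big((\param{n}{},\widehat a_n),\mathcal{E}_\star\big)$ by definition of $\mathcal{F}_\star$ as the projection of $\mathcal{E}_\star$ onto the $\mathcal{F}$-coordinate and of $d$ as the product distance, the bound on $d$ transfers directly to $d_{\mathcal{C}^k}(\param{n}{},\mathcal{F}_\star) = o_{\mathbb{P}}(1)$, which is the claim.

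The step I expect to be the main obstacle is the closedness of $\mathcal{F}_M$ in $\mathcal{C}^k$ (equivalently, the lower semicontinuity of $\norminf{\cdot}{W^{s,p}}$ along $\mathcal{C}^k$-convergent sequences): $\mathcal{C}^k$-convergence controls derivatives only up to order $k<s$, so one must recover control of the higher-order distributional derivatives by a weak-compactness/lower-semicontinuity argument in $L^p$ — using reflexivity of $L^p$ for $p>1$, and for $p=1$ invoking that bounded sets in $W^{s,1}$ restricted to the relevant order behave well enough via the Banach–Alaoglu theorem in the dual pairing with $\mathcal{C}_c^\infty$. Everything else (the embedding estimates, the dominated-convergence continuity arguments, matching the rate conditions) is routine once this point is settled.
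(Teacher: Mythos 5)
Your proposal is correct and follows essentially the same route as the paper: Section~\ref{NPHMM:sec:sobo} verifies H\ref{assum:pen} via the Sobolev embedding \eqref{eq:def:kappa}, invokes the compact embedding of \cite[Theorem 6.3]{adams:fournier:2003} for the (quasi-)compactness of $\mathcal{F}_M$ in $\mathcal{C}^k$, obtains continuity of $(f,a)\mapsto h(p_{f,\nu_a},p_\star)$ by dominated convergence, and then states the proposition as a direct application of Corollary~\ref{NPHMM:th:cons:esti}. Your extra care about closedness of $\mathcal{F}_M$ in $\mathcal{C}^k$ (lower semicontinuity of the Sobolev norm) addresses a point the paper passes over silently, but it does not change the argument.
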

Moreover, as shown in Section \ref{NPHMM:sec:proofs:prop:nu:phi}, the assumption $\overline{\overset{\circ}{\mathbb{X}}} = \mathbb{X}$ together with the continuity of the functions in $\mathcal{F}$ provided by \eqref{eq:def:kappa} imply that for any $f$ in $\mathcal{F}_\star$, $f(\mathbb{X}) = \paramstar(\mathbb{X})$ (see the proof in Section \ref{NPHMM:sec:proofs:prop:nu:phi}). 
Define the Hausdorff distance $d_\mathcal{H}(A,B)$ between two compact subsets $A$ and $B$ of $\mathbb{R}^\ell$ as 
\[d_\mathcal{H}(A,B) \eqdef \max \left ( \sup_{a\in A}  \inf_{b\in B} ||a-b||_{\mathbb{R}^\ell} \ ,\ \sup_{b\in B}  \inf_{a\in A}||a-b||_{\mathbb{R}^\ell} \right)\eqsp.
\]
Proposition \ref{NPHMM:consit:sobo1} implies Corollary \ref{NPHMM:cons:image}.
\begin{corollary}[{\bf $\mathbf{\mathcal{F} = W^{s,p}, \ s>m/p} $}]
\label{NPHMM:cons:image}
Assume that H\ref{assum:phimix}, H\ref{assum:Db}, H\ref{assum:X}\ref{assum:X:compact}) and  H\ref{assum:X:Lip} hold. Assume also that $I(f) =  \norminf{f}{W^{s,p}}^{1/\upsilon}$ for some $\upsilon$ such that $\upsilon b\dimY<1$ and that $\lambda_n$ and $\delta_n$ satisfy
\begin{equation*}
 \lambda_{n} n^{-1} \underset{n\to +\infty}{\longrightarrow} 0 , \ \lambda_{n} n^{-1/2} \underset{n\to +\infty}{\longrightarrow} +\infty\text{ and } \delta_n  = O\left(\frac{\lambda_n}{n}\right)\eqsp.
\end{equation*}
Then,
\[
d_{\mathcal{\mathcal{H}}}\left( \param{n}{}(\mathbb{X}) , \paramstar(\mathbb{X})\right) =o_{\mathbb{P}}(1)\eqsp.
\]
\end{corollary}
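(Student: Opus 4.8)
The plan is to deduce Corollary~\ref{NPHMM:cons:image} from Proposition~\ref{NPHMM:consit:sobo1} (applied with $k=0$, which is the content of the hypothesis $s>m/p$) together with the fact, recalled just before the statement and established in Section~\ref{NPHMM:sec:proofs:prop:nu:phi}, that every $f\in\mathcal{F}_\star$ satisfies $f(\mathbb{X})=\paramstar(\mathbb{X})$. The only genuinely new ingredient is an elementary Lipschitz estimate for the map $g\mapsto g(\mathbb{X})$ from $\mathcal{C}(\mathbb{X},\Rset^\dimY)$, equipped with the supremum norm, to the set of compact subsets of $\Rset^\dimY$ equipped with $d_\mathcal{H}$: for any two continuous $g,h:\mathbb{X}\to\Rset^\dimY$,
\[
d_\mathcal{H}\bigl(g(\mathbb{X}),h(\mathbb{X})\bigr)\le \sup_{x\in\mathbb{X}}\norm{g(x)-h(x)}\eqsp,
\]
since for each $x$ one has $\inf_{y\in\mathbb{X}}\norm{g(x)-h(y)}\le\norm{g(x)-h(x)}$, and symmetrically in $g$ and $h$. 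Up to the equivalence of norms on $\Rset^\dimY$ (an irrelevant constant in an $o_{\mathbb{P}}(1)$ statement), when $s>m/p$ the metric $d_{\mathcal{C}^0}$ appearing in Proposition~\ref{NPHMM:consit:sobo1} is the supremum-norm distance on $\mathcal{C}(\mathbb{X},\Rset^\dimY)$, so it dominates the right-hand side above.

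Next I would turn this into a purely deterministic bound: for every $f\in W^{s,p}$,
\[
d_\mathcal{H}\bigl(f(\mathbb{X}),\paramstar(\mathbb{X})\bigr)\le C\, d_{\mathcal{C}^0}(f,\mathcal{F}_\star)\eqsp,
\]
with $C$ a fixed constant. Indeed $f$ is continuous on $\mathbb{X}$ by \eqref{eq:def:kappa}; for any $g\in\mathcal{F}_\star$, the triangle inequality for $d_\mathcal{H}$, the identity $g(\mathbb{X})=\paramstar(\mathbb{X})$, and the Lipschitz estimate give $d_\mathcal{H}(f(\mathbb{X}),\paramstar(\mathbb{X}))\le d_\mathcal{H}(f(\mathbb{X}),g(\mathbb{X}))\le C\, d_{\mathcal{C}^0}(f,g)$; taking the infimum over $g\in\mathcal{F}_\star$ (which is nonempty, as $\paramstar\in\mathcal{F}_\star$) yields the displayed inequality. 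A pleasant feature of this route is that no measurable selection of a near-minimiser in $\mathcal{F}_\star$ is needed: the bound is obtained pointwise in $g$ and only then minimised.

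Finally, applying this with $f=\param{n}{}$ — which belongs to $\mathcal{F}=W^{s,p}$ by construction of the $\rho_n$-MPLE — gives $d_\mathcal{H}(\param{n}{}(\mathbb{X}),\paramstar(\mathbb{X}))\le C\, d_{\mathcal{C}^0}(\param{n}{},\mathcal{F}_\star)$ almost surely, and Proposition~\ref{NPHMM:consit:sobo1} (whose hypotheses are exactly those of the corollary, with $k=0$) shows the right-hand side is $o_{\mathbb{P}}(1)$; the conclusion follows. I do not anticipate a real obstacle here, the argument being a short deduction once Proposition~\ref{NPHMM:consit:sobo1} and the identity $f(\mathbb{X})=\paramstar(\mathbb{X})$ for $f\in\mathcal{F}_\star$ are available. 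The two points deserving a line of care are that this identity is legitimately at hand — it rests on $\overline{\overset{\circ}{\mathbb{X}}}=\mathbb{X}$ and the $\mathcal{C}^0$-continuity of elements of $\mathcal{F}$ provided by \eqref{eq:def:kappa}, as detailed in Section~\ref{NPHMM:sec:proofs:prop:nu:phi} — and the bookkeeping of the harmless constant relating $d_{\mathcal{C}^0}$ to $\sup_{x\in\mathbb{X}}\norm{\cdot}$.
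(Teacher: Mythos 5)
Your argument is exactly the route the paper intends: Corollary~\ref{NPHMM:cons:image} is stated as an immediate consequence of Proposition~\ref{NPHMM:consit:sobo1} (with $k=0$) together with the fact, established in Section~\ref{NPHMM:sec:proofs:prop:nu:phi}, that $f(\mathbb{X})=\paramstar(\mathbb{X})$ for every $f\in\mathcal{F}_\star$, and your $1$-Lipschitz bound $d_\mathcal{H}(g(\mathbb{X}),h(\mathbb{X}))\le\sup_{x\in\mathbb{X}}\norm{g(x)-h(x)}$ is precisely the elementary step the paper leaves implicit. The proof is correct.
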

Corollary \ref{NPHMM:cons:image} establishes the consistency of the estimator  $\param{n}{}(\mathbb{X})$ of the image of $f_\star$ in $\mathbb{R}^\ell$. This result is particularly interesting since  $\paramstar(\mathbb{X})$ is a manifold of dimension smaller than $\ell$ in $\mathbb{R}^\ell$. The proposed estimation procedure allows to approximate such manifolds of possibly low dimensions and only observed with additive noise in $\mathbb{R}^\ell$. Moreover, this result holds under relatively weak assumptions on the manifold. Since the identifiability of $f_\star$ is not necessary to have the identifiability of $f_\star(\mathbb{X})$, $\paramstar$ is not assumed to be bijective to establish this result. 

Proposition~\ref{NPHMM:th:cons:sobo} below states the consistency of the estimators $(\widehat{f}_n,\widehat{a}_n)$ in the case $b=2$ and $\mathcal{F} = W^{s,p}$. Assume that for any $a$ in $\mathcal{A}$,  $\nu_a \in \mathcal{D}_2 $ is of the form 
\[
\nu_a(x,x') =\nu_{q_a}(x) q_a(x,x') \mbox{ with }q_a(x,x') = c_a(x)\rho_a(||x-x'||)\eqsp,
\]
where $\rho_-\le\rho_a \le \rho_+$. It is also assumed that there exists a unique $a_{\star}\in\mathcal{A}$ such that $\nu_{2,\star} = \nu_{a_{\star}}$ and that $\rho_{a_{\star}}$ is one-to-one.
Proposition~\ref{NPHMM:th:cons:sobo} is a direct application of Corollary~\ref{NPHMM:th:cons:esti} and of Proposition \ref{NPHMM:prop:ident:Markov} and is stated without proof.

\begin{proposition}[{\bf $\mathbf{\mathcal{F} = W^{s,p}, \ s>m/p+k, \ k\ge 1, \ b=2}$}]
\label{NPHMM:th:cons:sobo}
Assume that H\ref{assum:phimix}, H\ref{assum:Db} and  H\ref{assum:X} - \ref{assum:X:Lip} hold. Assume also that $I(f) =  \norminf{f}{W^{s,p}}^{1/\upsilon}$ for some $\upsilon$ such that $2\upsilon\dimY<1$ and that $\lambda_n$ and $\delta_n$ satisfy
\begin{equation*}
 \lambda_{n} n^{-1} \underset{n\to +\infty}{\longrightarrow} 0 , \ \lambda_{n} n^{-1/2} \underset{n\to +\infty}{\longrightarrow} +\infty\text{ and } \delta_n  = O\left(\frac{\lambda_n}{n}\right)\eqsp.
\end{equation*}
Then,
\[
\mathcal{F}_{\star}=\left\{\paramstar\circ\phi;\; \phi\;\mbox{ is an isometry of } \mathbb{X}\right\}\eqsp,
\]
and
\[
d_{\mathcal{C}^k}\left(\param{n}{},\mathcal{F}_{\star}\right) =o_{\mathbb{P}}(1)\quad\mbox{and}\quad d_{\mathcal{A}}\left(\widehat{a}_n,a_{\star}\right) =o_{\mathbb{P}}(1)\eqsp,
\]
\end{proposition}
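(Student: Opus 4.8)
The plan is to read the set $\mathcal{F}_\star$ off the identifiability Corollary~\ref{NPHMM:cor:markov} and then transport the abstract consistency statement of Corollary~\ref{NPHMM:th:cons:esti} through this identification.

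First I would check the hypotheses of Corollary~\ref{NPHMM:cor:markov}. Since $s>m/p+k$ with $k\ge 1$, the compact embedding recalled after \eqref{eq:def:kappa} gives $\mathcal{F}=W^{s,p}\subset\mathcal{C}^k\subset\mathcal{C}^1$, so the $\mathcal{C}^1$ requirement on $f$ is met. By assumption $q_{a_\star}(x,x')=c_{a_\star}(x)\rho_{a_\star}(\|x-x'\|)$ with $\rho_{a_\star}$ one-to-one, and writing $\nu_{a_\star}(x,x')=\nu_\star(x)q_{a_\star}(x,x')$ where $\nu_\star$ denotes the stationary density of $q_{a_\star}$, the bound $\nu_-\le\nu_{a_\star}\le\nu_+$ from H\ref{assum:Db} together with the compactness of $\mathbb{X}$ forces $\nu_\star$, and hence $q_{a_\star}$, to be bounded away from $0$ and $\infty$; the same is true of every competitor $q_a$ because $\rho_-\le\rho_a\le\rho_+$. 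Thus condition \eqref{NPHMM:eq:cond:q} holds for all transition densities in play and, writing $q_\star\eqdef q_{a_\star}$, Corollary~\ref{NPHMM:cor:markov} applies to each pair $(f,\nu_a)$: $h(p_{f,\nu_a},p_\star)=0$ if and only if $\phi=\paramstar^{-1}\circ f$ is an isometry of $\mathbb{X}$ and $q_a=q_\star$.

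Next I would rephrase the condition $q_a=q_\star$ as $a=a_\star$. Under the uniform lower bound in \eqref{NPHMM:eq:cond:q} a transition density has a unique stationary density $\nu_q$, so $q_a=q_\star$ forces $\nu_{q_a}=\nu_{q_\star}=\nu_\star$ and hence $\nu_a(x,x')=\nu_{q_a}(x)q_a(x,x')=\nu_\star(x)q_\star(x,x')=\nu_{a_\star}(x,x')$; the converse is immediate, and by the assumed uniqueness of $a_\star$ with $\nu_{a_\star}$ equal to the true joint density these are equivalent. Moreover any isometry $\phi$ of $\mathbb{X}$ extends to an affine isometry of $\mathbb{R}^m$ (recall $\overset{\circ}{\mathbb{X}}\neq\emptyset$ by H\ref{assum:X}\ref{assum:X:compact})), hence is $\mathcal{C}^\infty$, so $\paramstar\circ\phi\in W^{s,p}$ whenever $\paramstar\in W^{s,p}$. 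Combining these facts, the set $\mathcal{E}_\star$ of \eqref{NPHMM:eq:estar} equals $\mathcal{F}_\star\times\{a_\star\}$ with $\mathcal{F}_\star=\{\paramstar\circ\phi;\ \phi\ \text{an isometry of}\ \mathbb{X}\}$, which is the claimed description of $\mathcal{F}_\star$.

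Finally I would apply Corollary~\ref{NPHMM:th:cons:esti}, whose hypotheses are precisely those verified for the Sobolev class in the discussion preceding Proposition~\ref{NPHMM:consit:sobo1}: $I(f)=\norminf{f}{W^{s,p}}^{1/\upsilon}$ meets H\ref{assum:pen} since $2\upsilon\dimY<1$; each $\mathcal{F}_M$ is compact in $(\mathcal{C}^k,d_{\mathcal{C}^k})$ by the compact embedding $W^{s,p}\hookrightarrow\mathcal{C}^k$; $\mathcal{A}$ is compact for $d_\mathcal{A}$; and $(f,a)\mapsto h^2(p_{f,\nu_a},p_\star)$ is continuous by dominated convergence, using $\nu_-\le\nu_a\le\nu_+$, the Gaussian density $\varphi$, uniform convergence in $\mathcal{C}^k$ and the assumed continuity of $a\mapsto\nu_a$. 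Corollary~\ref{NPHMM:th:cons:esti} then yields $d((\param{n}{},\widehat{a}_n),\mathcal{E}_\star)=o_{\mathbb{P}}(1)$ for the product metric; since the $\mathcal{A}$-section of $\mathcal{E}_\star=\mathcal{F}_\star\times\{a_\star\}$ is the singleton $\{a_\star\}$, the product metric splits and one obtains simultaneously $d_{\mathcal{C}^k}(\param{n}{},\mathcal{F}_\star)=o_{\mathbb{P}}(1)$ and $d_\mathcal{A}(\widehat{a}_n,a_\star)=o_{\mathbb{P}}(1)$. I expect the only delicate point to be the bookkeeping of the third paragraph — turning the identifiability characterisation into the product form $\mathcal{F}_\star\times\{a_\star\}$ — which hinges on the uniqueness of the stationary density granted by the lower bound in \eqref{NPHMM:eq:cond:q}, the uniqueness assumption on $a_\star$, and the stability of $W^{s,p}$ under composition with affine isometries.
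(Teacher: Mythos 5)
Your argument is correct and is precisely the route the paper intends: the paper states this proposition without proof, noting only that it is a direct application of Corollary~\ref{NPHMM:th:cons:esti} together with the $b=2$ identifiability results, and your write-up simply fills in those details (reading $\mathcal{E}_\star=\mathcal{F}_\star\times\{a_\star\}$ off Corollary~\ref{NPHMM:cor:markov} via the uniqueness of the stationary density and of $a_\star$, then transporting the convergence through the product metric). The verifications you flag as delicate — the bounds giving \eqref{NPHMM:eq:cond:q}, the stability of $W^{s,p}$ under composition with isometries, and the splitting of the product distance — are exactly the right bookkeeping and are all sound.
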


\section{Numerical experiments}
\label{NPHMM:sec:sobo:simu}
This section provides a practical implementation of the estimation procedure proposed in Section~\ref{NPHMM:sec:model}. The algorithm is applied in the cases $b=1$ and $b=2$ to assess the consistency and identifiability results with simulated data. When $b=2$, the hidden chain is assumed to be a Markov chain with a parametric transition kernel of the form $q_\star(x,x') = C_{a_{\star}}(x)\mathrm{exp}(-\|x'-x\|/a_{\star})$. This particular case is motivated by the recent work of \cite{dumont:lecorff:2014} where the same assumption on the hidden chain is made to perform indoor simultaneous localization and mapping based on WiFi signals. The process $(X_k)_{k \ge 0}$ is the position of a mobile evolving in a building and receiving the signal strengths $Y_k$ which satisfy \eqref{NPHMM:eq:model} at each time step $k$.

In Section~\ref{NPHMM:sec:EMalgo}, a generic EM based procedure is introduced to solve the inference problem detailed in Section~\ref{NPHMM:sec:model}. In Section~\ref{sec:exp:results}, we intend to apply this algorithm in the Sobolev setting of Section~\ref{NPHMM:sec:sobo} with a penalization function $I(f)$ based on the Sobolev norm $\|\cdot\|_{W^{2,2}}$. The assumptions required to obtain the identifiability and consistency results lead to a penalization term of the form $I(f) =\|f\|_{W^{2,2}}^{\vartheta}$ where $\vartheta>2b$. As explained in Section~\ref{sec:exp:results}, the M-step of the EM algorithm is intractable in this case while it can be efficiently performed under weaker assumptions (e.g. when $I(f)$ is based on the $\mathrm{L}^2$ norm of $f''$). Therefore, the proposed procedure weakens this assumption to illustrate the identifiability and consistency results. In particular, the convergence observed in the simulations of Section~\ref{sec:exp:results} seems to indicate that assumption H\ref{assum:pen} could be weakened. 
 
\subsection{Proposed Expectation Maximization algorithm}
\label{NPHMM:sec:EMalgo}
This section introduces a practical algorithm to compute the estimators defined in \eqref{NPHMM:eq:fn} when $\delta_n$ is set to zero. It is assumed that the maximizer in \eqref{NPHMM:eq:fn} exists which is the case for instance in the Sobolev framework of Section \ref{NPHMM:sec:sobo} and if $\mathcal{D}_b$ is compact. This proposed Expectation-Maximization (EM) based procedure iteratively produces a sequence of estimates $\widehat{\nu}^t$, $\param{}{t}$, $t\ge 0$, see \cite{dempster:laird:rubin:1977}. Assume that the current parameter estimates are given by $\widehat{\nu}^t$ and  $\param{}{t}$. The estimates $\widehat{\nu}^{t+1}$ and $\param{}{t+1}$ are defined as one of the maximizers of the function $Q$:
\begin{equation*}
(\nu,f)\mapsto Q((\nu,f),(\widehat{\nu}^t,\param{}{t})) \eqdef \sum_{k=0}^{n-1} \Esp_{\widehat{\nu}^t,\param{}{t}}\left[\ln p_{f,\nu}\left(\mathbf{X}_{k},\mathbf{Y}_{k}\right) \big\vert\mathbf{Y}_{k}\right] -\lambda_{n}I(f)\eqsp,
\end{equation*}
where $\Esp_{\widehat{\nu}^t,\param{}{t}}\left[\cdot\right]$ denotes the conditional expectation under the  model parameterized by $\widehat{\nu}^t$ and $\param{}{t}$ and where, for any $\mathbf{x}=(x_0,\ldots,x_{b-1})\in\mathbb{X}^{b}$ and any $\mathbf{y}=(y_0,\ldots,y_{b-1})\in\mathbb{R}^{\ell b}$,
\begin{equation*}
 p_{f,\nu}\left(\mathbf{x},\mathbf{y}\right) \eqdef \nu(\mathbf{x})\prod_{i=0}^{b-1}\denseps(y_i - f(x_i)) \eqsp.
\end{equation*}
Note that the intermediate quantity $Q((\nu,f),(\widehat{\nu}^t,\param{}{t}))$ can be written:
\[
Q((\nu,f),(\widehat{\nu}^t,\param{}{t})) = Q_t^1(\nu) + Q_t^2(f)\eqsp,
\]
where
\begin{align}
Q_t^1(\nu) &\eqdef \sum_{k=0}^{n-1} \Esp_{\widehat{\nu}^t,\param{}{t}}\left[\ln\left\{\nu(\mathbf{X}_k)\right\} \big\vert \mathbf{Y}_{k}\right]\eqsp, \label{NPHMM:eq:Q1}\\
Q_t^2(f) &\eqdef\sum_{k=0}^{n-1} \Esp_{\widehat{\nu}^t,\param{}{t}}\left[\ln \left\{\prod_{i=0}^{b-1}\denseps\left(Y_{bk+i} - f(X_{bk+i})\right)\right\}  \Bigg\vert \mathbf{Y}_{k}\right]-\lambda_{n} I(f)\eqsp.\label{NPHMM:eq:Q2}
\end{align}
Therefore $\widehat{\nu}^{t+1}$ is obtained by maximizing the function $\nu\mapsto Q_t^1(\nu)$  and $\param{}{t+1}$ by maximizing the function $f\mapsto Q_t^2(f)$.  
Lemma~\ref{NPHMM:lem:EM} proves that the penalized pseudo-likelihood increases at each iteration of this EM based algorithm. Its proof is postponed to Appendix~\ref{se:proof:EM}.
\begin{lemma}     
\label{NPHMM:lem:EM}
The sequences $\widehat{\nu}^t$ and $\param{}{t}$ satisfy
\[
\sum_{k=0}^{n-1} \ln p_{\param{}{t+1},\widehat{\nu}^{t+1}}\left(\mathbf{Y}_{k} \right) -\lambda_{n} I(\param{}{t+1})\ge \sum_{k=0}^{n-1} \ln p_{\param{}{t},\widehat{\nu}^{t}}\left(\mathbf{Y}_{k} \right) -\lambda_{n} I(\param{}{t})\eqsp.
\]
\end{lemma}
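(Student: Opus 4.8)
The statement to prove is the classical monotonicity of EM (the ``ascent'' property), specialized to the penalized pseudo-likelihood considered here. The plan is to reproduce, in this notation, the standard Dempster--Laird--Rubin argument based on Jensen's inequality, taking care that the penalty term $-\lambda_n I(f)$ with $I(f)=\|f\|_{W^{s,2}}^{1/\upsilon}$ is carried along correctly and that the block structure (a single block variable $\mathbf{Y}_k=(Y_{2k},Y_{2k+1})$ with missing data $\mathbf{X}_k=(X_{2k},X_{2k+1})$) is handled.

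First I would fix $k$ and recall that the complete-data density factorizes as $p_{f,a}(\mathbf{x}_k,\mathbf{y}_k)=\nu_a(x_{2k})q_a(x_{2k},x_{2k+1})\denseps(y_{2k}-f(x_{2k}))\denseps(y_{2k+1}-f(x_{2k+1}))$, while the observed-data (block pseudo-likelihood) density is $p_{f,a}(\mathbf{y}_k)=\int p_{f,a}(\mathbf{x}_k,\mathbf{y}_k)\,\mu^{\otimes 2}(\rmd\mathbf{x}_k)$. Writing $\pi_{f,a}(\mathbf{x}_k\mid\mathbf{y}_k)=p_{f,a}(\mathbf{x}_k,\mathbf{y}_k)/p_{f,a}(\mathbf{y}_k)$ for the conditional (``smoothing'') density of the hidden block given the observed block, one has for every $(f,a)$ the identity $\ln p_{f,a}(\mathbf{y}_k)=\Esp_{\widehat a^p,\param{}{p}}[\ln p_{f,a}(\mathbf{X}_k,\mathbf{Y}_k)\mid\mathbf{Y}_k=\mathbf{y}_k]-\Esp_{\widehat a^p,\param{}{p}}[\ln\pi_{f,a}(\mathbf{X}_k\mid\mathbf{Y}_k)\mid\mathbf{Y}_k=\mathbf{y}_k]$, since $\ln p_{f,a}(\mathbf{y}_k)$ does not depend on $\mathbf{x}_k$ and the conditional density integrates to one. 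Summing over $k=0,\dots,n-1$ and subtracting $\lambda_n I(f)$ gives
\[
\sum_{k=0}^{n-1}\ln p_{f,a}(\mathbf{Y}_k)-\lambda_n I(f)
= Q\bigl((a,f),(\widehat a^p,\param{}{p})\bigr) - H\bigl((a,f),(\widehat a^p,\param{}{p})\bigr),
\]
where $H((a,f),(\widehat a^p,\param{}{p}))\eqdef\sum_{k=0}^{n-1}\Esp_{\widehat a^p,\param{}{p}}[\ln\pi_{f,a}(\mathbf{X}_k\mid\mathbf{Y}_k)\mid\mathbf{Y}_k]$ and $Q$ is exactly the intermediate quantity defined before Lemma~\ref{NPHMM:lem:EM}.

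The core step is then the inequality $H((a,f),(\widehat a^p,\param{}{p}))\le H((\widehat a^p,\param{}{p}),(\widehat a^p,\param{}{p}))$ for all $(f,a)$, which is Jensen's inequality (equivalently, nonnegativity of the Kullback--Leibler divergence between $\pi_{\widehat a^p,\param{}{p}}(\cdot\mid\mathbf{Y}_k)$ and $\pi_{f,a}(\cdot\mid\mathbf{Y}_k)$) applied blockwise and summed over $k$; here one uses Assumption H\ref{assum:Db} together with the lower bounds $\rho_-\le\rho_a$ and the positivity of $\denseps$ to guarantee that all the densities involved are positive, so the divergences and the expectations are well defined. By the definition of $(\widehat a^{p+1},\param{}{p+1})$ as a maximizer of $(a,f)\mapsto Q((a,f),(\widehat a^p,\param{}{p}))$ (which, by the decomposition $Q=Q_p^1(a)+Q_p^2(f)$, amounts to maximizing $Q_p^1$ in $a$ and $Q_p^2$ in $f$ separately), we have $Q((\widehat a^{p+1},\param{}{p+1}),(\widehat a^p,\param{}{p}))\ge Q((\widehat a^p,\param{}{p}),(\widehat a^p,\param{}{p}))$. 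Combining the $Q$-increase with the $H$-decrease in the displayed identity, evaluated at $(f,a)=(\param{}{p+1},\widehat a^{p+1})$ and at $(f,a)=(\param{}{p},\widehat a^p)$, yields
\[
\sum_{k=0}^{n-1}\ln p_{\param{}{p+1},\widehat a^{p+1}}(\mathbf{Y}_k)-\lambda_n I(\param{}{p+1})
\ge \sum_{k=0}^{n-1}\ln p_{\param{}{p},\widehat a^p}(\mathbf{Y}_k)-\lambda_n I(\param{}{p}),
\]
which is the claim.

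The only genuine subtlety — the ``hard part,'' though it is mild here — is making sure the maximization step defining $(\widehat a^{p+1},\param{}{p+1})$ is consistent with the penalized objective: since the penalty $-\lambda_n\|f\|_{W^{s,2}}^{1/\upsilon}$ depends only on $f$ and not on $a$, it passes intact into $Q_p^2$, so maximizing $Q$ is genuinely equivalent to maximizing the penalized $Q$, and the argument above applies verbatim with $I(f)$ in place. One should also note that the argument only needs $(\widehat a^{p+1},\param{}{p+1})$ to \emph{increase} $Q$ rather than to globally maximize it — so the statement would still hold for a generalized-EM step — and that integrability of $\ln\denseps(Y-f(X))$ under the conditional law is immediate from the Gaussian form of $\denseps$ and the uniform bound $\|f\|_\infty\le\kappa\|f\|_{W^{s,2}}$ from \eqref{eq:def:kappa}, so no additional moment hypotheses are needed. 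The case $b=1$ treated in Remark~\ref{NPHMM:rem:b=1} is identical with the factor $q_a$ dropped.
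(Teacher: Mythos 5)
Your proof is correct and follows essentially the same route as the paper: both arguments reduce the claim to a single application of Jensen's inequality involving the conditional density of the hidden block $\mathbf{X}_k$ given $\mathbf{Y}_k$ under the current iterate, combined with the fact that $(\widehat a^{p+1},\param{}{p+1})$ increases $Q$. The only difference is presentational --- you write the exact decomposition $\sum_k\ln p_{f,a}(\mathbf{Y}_k)-\lambda_n I(f)=Q-H$ and invoke nonnegativity of the Kullback--Leibler divergence for the $H$-term, whereas the paper applies the concavity of $\log$ directly to the importance-weighted integral to obtain the same lower bound --- and your observations (the penalty passes intact into $Q_p^2$, only a generalized-EM increase of $Q$ is needed) are accurate.
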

\begin{rem}
Like for all EM or gradient based procedures, there is no guarantee that the sequence $(\widehat{f}^t,\widehat{\nu}^t)_{t\ge 0}$ converges, when $t$ grows to infinity, towards the target estimate:
\[
(\param{n}{},\widehat{\nu}_n)=\argmax_{f,\nu}\left\{\sum_{k=0}^{n-1} \ln p_{f,\nu}\left(\mathbf{Y}_{k} \right) -\lambda_{n} I(f) \right\}\eqsp.
\] 
Lemma \ref{NPHMM:lem:EM} only ensures that $(\widehat{f}^t,\widehat{\nu}^t)_{t\ge 0}$ converges towards a local maximum of the penalized pseudo likelihood. This limitation is proper to models with hidden data. 
\end{rem}

\subsection{Experimental results}
\label{sec:exp:results}
This section illustrates the convergence of the estimates \eqref{NPHMM:eq:fn} using the EM procedure of Section \ref{NPHMM:sec:EMalgo}. The state-space is $\mathbb{X} = [0,1]$ and the unknown function $f_\star$ is given by
$$\begin{array}{ccccc}
\paramstar & : & [0,1]& \to & \Rset^{2} \\
& &x & \mapsto & \left(\cos(\pi x),\sin(\pi x)\right)\eqsp.
\end{array}$$
Therefore, throughout this section $m=1$ and $\ell=2$. As shown in Section~\ref{NPHMM:sec:identifiability}, the identifiability of $\paramstar$ up to an isometric function of $[0,1]$ can be obtained:
\begin{enumerate}[-]
\item In the case $b=1$ when $\nu_{1,\star}$ is assumed to be known.
\item In the case $b=2$ when $\mathcal{D}_2$ is the set of probability densities defined on $\mathbb{X}^2$ and of the form $\nu(x,x') = c(x)\rho(\|x-x'\|)$.
\end{enumerate}
The performance of the algorithm is assessed with two numerical experiments. 
\begin{enumerate}[-]
\item First, $(X_k)_{k\ge0}$ is assumed to be i.i.d. uniformly distributed on $[0,1]$ and only $\paramstar$ is estimated using $b=1$ in \eqref{NPHMM:eq:fn}. 
\item Then, $(X_k)_{k\ge0}$ is assumed to be a Markov chain with density kernel  given by
\[
q_\star(x,x') =q_{a_{\star}}(x,x') \eqdef C_{a_{\star}}(x)\mathrm{exp}\left(-\frac{\|x'-x\|}{a_{\star}}\right)
\]
and $a_\star$ and $\paramstar$ are estimated using $b=2$ in \eqref{NPHMM:eq:fn}. 
\end{enumerate}
In both cases, we wish to use the Sobolev setting of Section~\ref{NPHMM:sec:sobo} with $\lambda_n$ such that $\lambda_n \propto \log(n)n^{1/2}$ and  $I(f) = ||f||^{1/v}_{W^{2,2}}$ with  $1/v>b\ell=2b$ so that the hypothesis of Propositions \ref{NPHMM:consit:sobo1} and \ref{NPHMM:th:cons:sobo} are fulfilled. However, as discussed in the next section, such a complexity function $I$ may be intractable for the optimization problem.

\subsubsection{Approximations}
The computation of the intermediate quantities \eqref{NPHMM:eq:Q1} and \eqref{NPHMM:eq:Q2} requires an approximation of the conditional expectations $\mathbb{E}_{\widehat{\nu}^t,\param{}{t}}\left[ h(\mathbf{X}_k,\mathbf{Y}_k)\middle| \mathbf{Y}_{k}\right]$. For each $0\le k\le n-1$, the approximation of the distribution of $\mathbf{X}_k$ conditionally on $\mathbf{Y}_k$ when the parameters are $(\widehat{\nu}^t,\param{}{t})$ is dealt with Monte Carlo simulations. For each $t\ge0$ and each $0\le k\le n-1$, the Monte Carlo approximation is based on a set of particles $\{\mathbf{\Xi}^{t,j}_k\}_{j=1}^{N_{mc}}$, where \linebreak $\mathbf{\Xi}^{t,j}_k = (\mathbf{\xi}^{t,j}_{k,0},\ldots,\mathbf{\xi}^{t,j}_{k,b-1})$, associated with weights $\{\omega^{t,j}_k\}_{j=1}^{N_{mc}}$ such that for any bounded function $h$:
\[
\Esp_{\widehat{\nu}^t,\param{}{t}}\left[h(\mathbf{X}_k , \mathbf{Y}_k)\Big|\mathbf{Y}_k \right] \approx \sum_{j=1}^{N_{mc}} \omega_k^{t,j} h(\mathbf{\Xi}^{t,j}_k , \mathbf{Y}_k)\eqsp.
\]
Therefore, \eqref{NPHMM:eq:Q1} and \eqref{NPHMM:eq:Q2} are  approximated by:
\begin{align}
Q_t^1(\nu) &\approx \sum_{k=0}^{n-1} \sum_{j=1}^{N_{mc}} \omega_k^{t,j} \ln\left\{\nu(\mathbf{\Xi}^{t,j}_k)\right\}  \eqsp, \label{NPHMM:eq:nueq}\\
Q_t^2(f) &\approx -\frac{1}{2}\sum_{k=0}^{n-1} \sum_{j=1}^{N_{mc}}\omega_k^{t,j} \sum_{i=0}^{b-1}\| Y_{bk+i} - f(\xi^{t,j}_{k,i})\|^2 -\lambda_{n} \|f\|^{1/v}_{W^{2,2}}\eqsp.\label{NPHMM:eq:opti1}
\end{align}
However, the maximization of \eqref{NPHMM:eq:opti1} when $1/v >2b$ may be complex. Relaxing the hypothesis $1/v >2b$ by choosing $I(f) = \|f\|^{2}_{W^{2,2}}$ ($1/v=2$) allows to compute the maximizer  of \eqref{NPHMM:eq:opti1} as in \cite{Boor::1966} where the setting is similar except that $I(f) = \|f''\|^2_{L^2}$. \cite{Boor::1966} shows that the optimization problem can be written as an orthogonal projection in a Hilbert space. Nevertheless, using $1/v>2b$ (where $2b=2$ in the first study and $2b=4$ in the second one) as requested by Propositions \ref{NPHMM:consit:sobo1} and \ref{NPHMM:th:cons:sobo} leads to a much more complicated optimization problem since it can not be interpreted as an orthogonal projection in a Hilbert space. Moreover, the maximization of \eqref{NPHMM:eq:opti1} has been widely studied when $I(f) = \|f\|^{1/v}_{W^{2,2}}$ is replaced by $I(f) = \|f''\|^{2}_{L^2}$. In this setting, $\param{}{p+1}$ is then a regression spline (see for instance \cite{Boor::1966,hastie::1990}). Therefore, the constraints on $I(f)$ required by Propositions \ref{NPHMM:consit:sobo1} and \ref{NPHMM:th:cons:sobo} are relaxed in the simulations below where  $I(f) = \|f''\|^{2}_{L^2}$ and where pre-built optimized routines\footnote{In the following simulations, we use the csaps Matlab function from the Curve Fitting Toolbox to perform the M-step based on smoothing splines.} are used to compute $\param{}{t+1}$ given $\param{}{t}$.

\subsubsection{Experiment 1: $(X_k)_{k\ge 0}$ i.i.d.}
In this section, $b=1$ and $\nu_{1,\star}=1$ is assumed to be known. The estimation of $f_\star$  is performed with $N_{mc}=100$. In this case, for each $t\ge 0$, $0\le k \le n-1$ and $1\le j \le N_{mc}$,
\[
\mathbf{\xi}^{t,j}_{k,0} = \mathbf{\xi}^{t,j}_{k}\sim\nu_{1,\star}\quad\mbox{and}\quad \omega_{k}^{t,j}\propto \varphi(Y_{k} - \widehat{f}^t(\mathbf{\xi}^{t,j}_{k}))\eqsp.
\]
Figure~\ref{NPHMM:fig:boxplot} displays the $\rmL^2$ error of the estimation of $\paramstar$ after $100$ iterations as a function of the number of observations. The $\rmL^2$ estimation error decreases quickly for small values of $n$ (lower than $5000$) and then goes on decreasing at a lower rate as $n$ increases. It can be seen that even with a great number of observations, a small bias still remains for both functions (with a mean a bit lower than $0.05$). Indeed, there are always small errors in the estimation of $\paramstar$ around $x=0$ and $x=1$.

\begin{figure}[H]
\centering
\subfloat[$f_1$.]{\includegraphics[scale=.35]{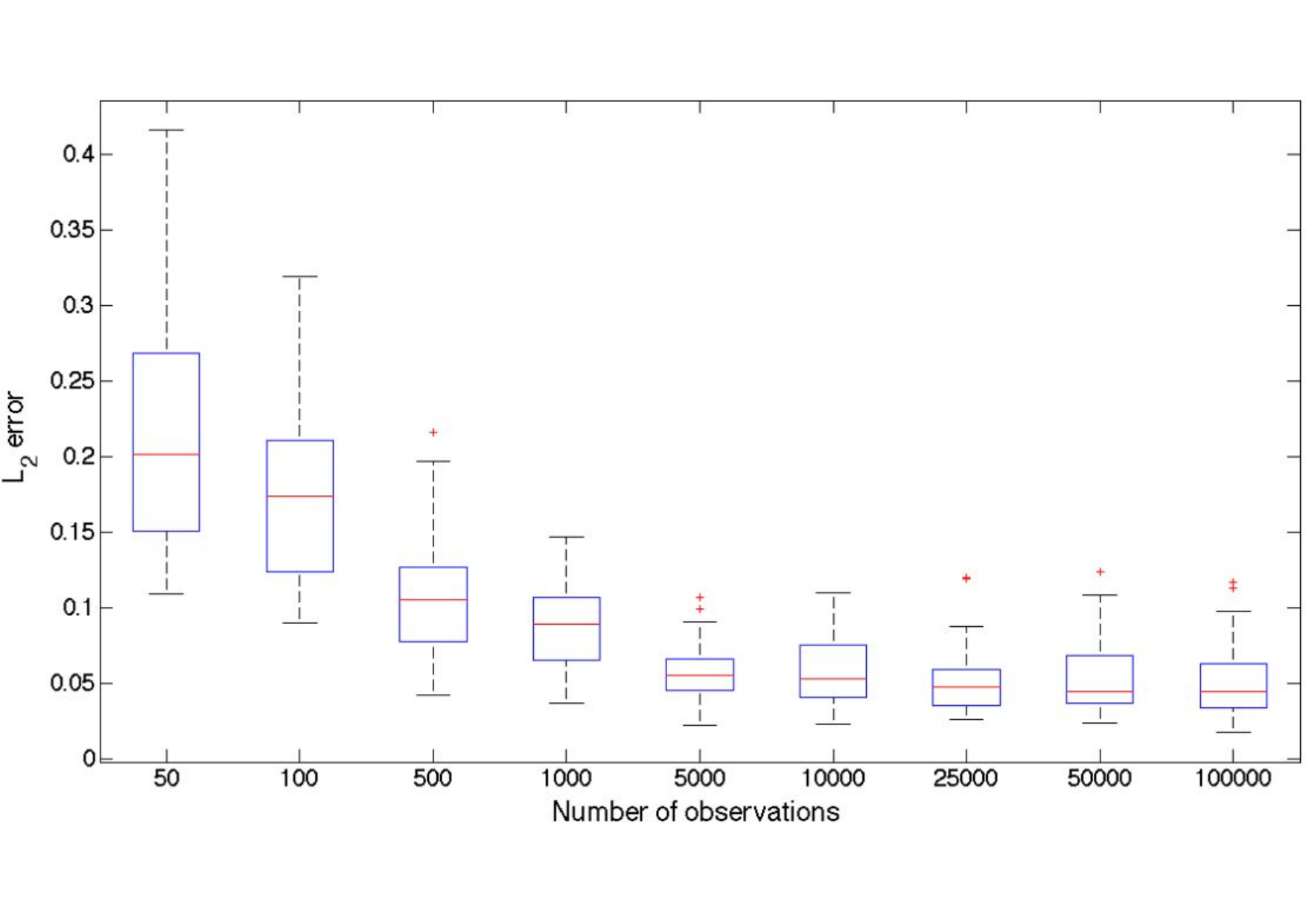}}\\
\subfloat[$f_2$.]{\includegraphics[scale=.35]{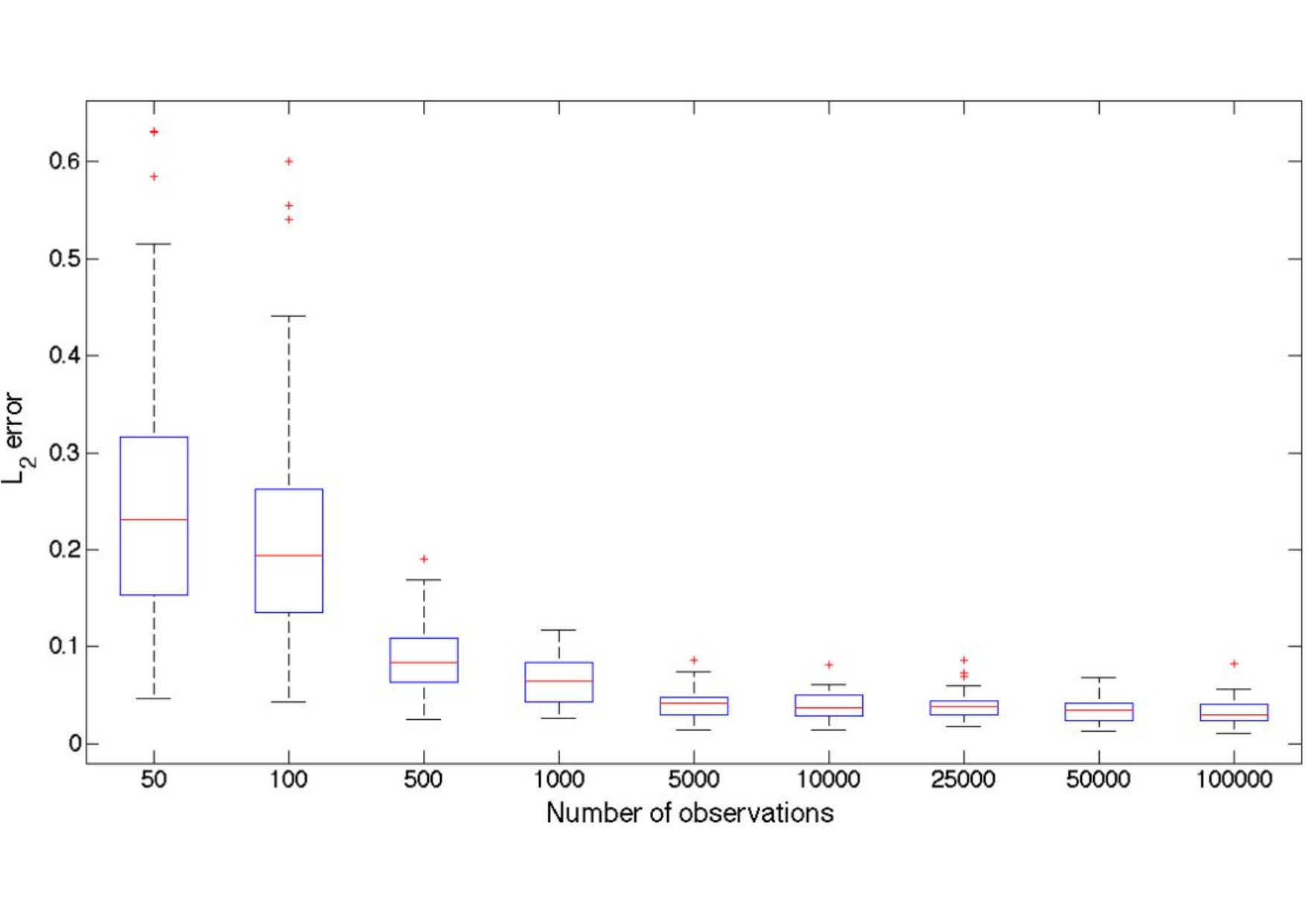}}
\caption{$\rmL^2$ error after $100$ iterations over $100$ Monte Carlo runs.}
\label{NPHMM:fig:boxplot}
\end{figure}
Figure~\ref{NPHMM:fig:finalf} shows the estimates after $100$ iterations when $n=25.000$. We observe on this Monte Carlo study that all the runs converge towards the isometric transformation $x\mapsto \paramstar(1-x)$.  This can be explained by the choice of the starting point of the EM algorithm.  The isometry is used in  Figure~\ref{NPHMM:fig:boxplot} to compute the $\rmL^2$ error. This simulation illustrates the identifiability results obtained in Section~\ref{NPHMM:sec:identifiability}.

\begin{figure}[H]
\centering
\subfloat[With no isometry for $f_1$.]{\includegraphics[scale=.3]{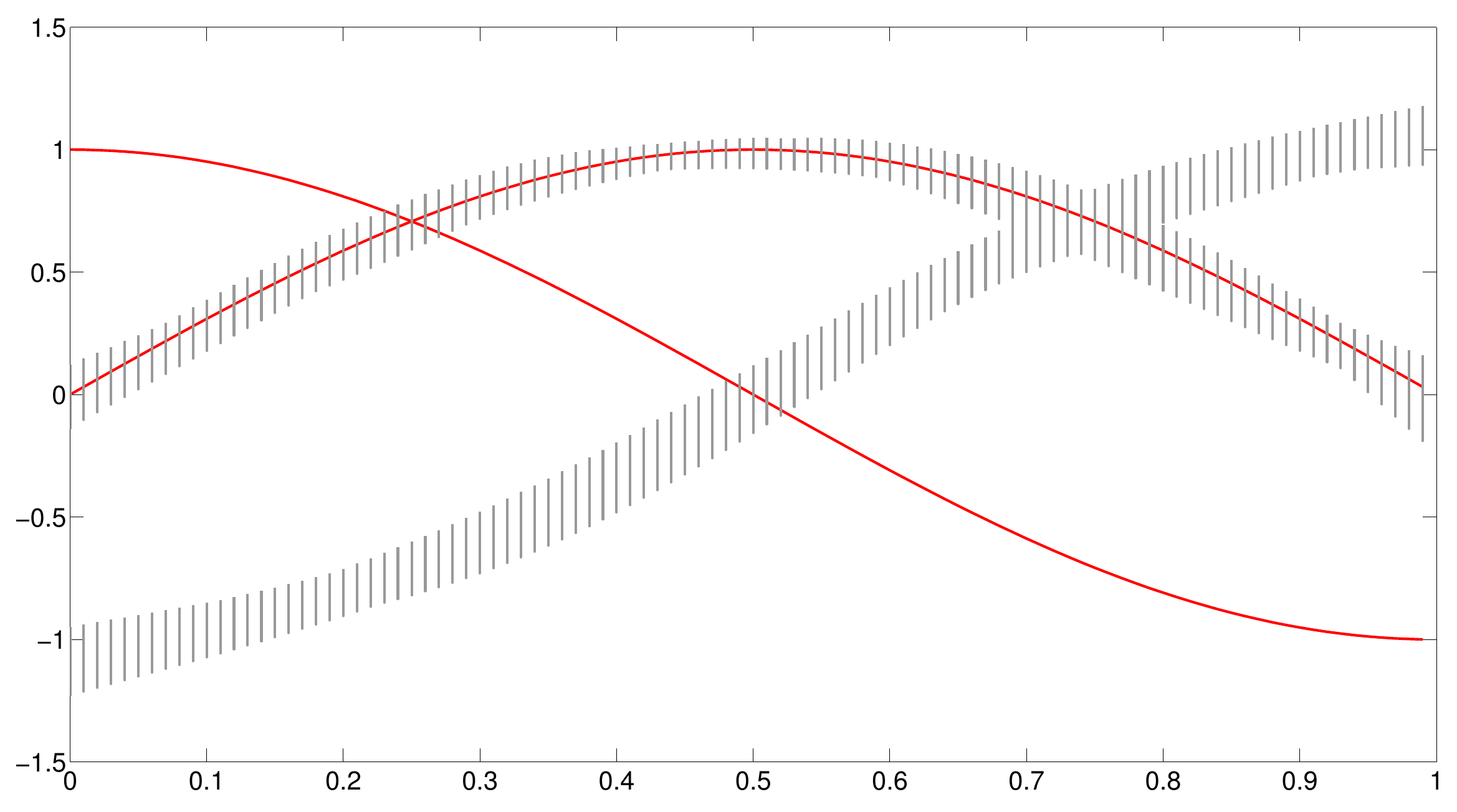}}\\
\subfloat[With the isometry $x\mapsto 1-x$ for $f_1$.]{\includegraphics[scale=.3]{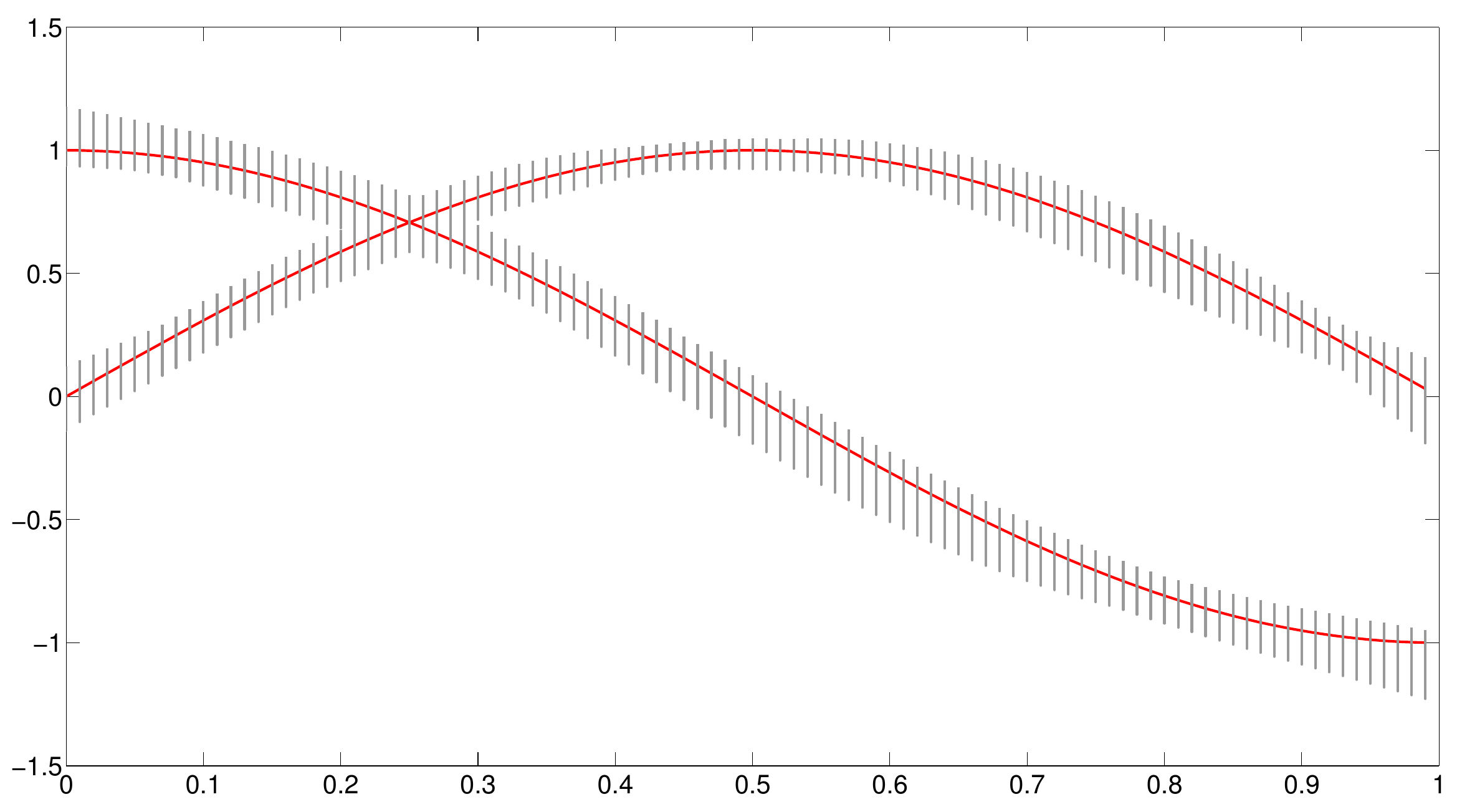}}
\caption{True functions (bold lines) and estimates after $100$ iterations (vertical lines)  over $100$ Monte Carlo runs ($n=25.000$).}
\label{NPHMM:fig:finalf}
\end{figure}

\subsubsection{Experiment 2: $(X_k)_{k\ge0}$ Markov chain}

In this section, $b = 2$ and  $a_{\star}$ and $\paramstar$ are estimated.
Define for any $a>0$, 
\begin{align*}
\nu_a(x,x') &= \nu_{1,a}(x)\cdot c_a(x)\exp\left(-\frac{|x-x'|}{a}\right)\eqsp,\\ 
\nu_{1,a}(x) &\propto  c_a^{-1}(x) = \int_{[0,1]} \exp\left(-\frac{|x-x'|}{a}\right)\mathrm{d}x'\eqsp.
\end{align*}
$\widehat{\nu}^{t+1}$ is given by  $\nu_{\widehat{a}^{t+1}}$ where $\widehat{a}^{t+1}$ is computed by maximizing the function 
\[
a\mapsto \log\left(a+a^2(\exp(-1/a)-1)\right) + \frac{1}{na}\sum_{k=0}^{n-1}\sum_{j=1}^{N_{mc}}\omega_k^{t,j} |\xi_{k,0}^{t,j}-\xi_{k,1}^{t,j}| \eqsp,
\]
where, for all $0\le k\le n-1$,  $(\xi_{k,0}^{t,j},\xi_{k,1}^{t,j})_{j=1}^{N_{mc}}$ are independently sampled uniformly in $[0,1]\times[0,1]$ and associated with the importance weights: 
\begin{equation}
\label{NPHMM:eq:omega}
\omega_k^{t,j}\propto\dens_{\widehat{a}^t}(\xi_{k,0}^{t,j})q_{\widehat{a}^t}(\xi_{k,0}^{t,j},\xi_{k,1}^{t,j})\denseps(Y_{2k}-\widehat{f}^t(\xi_{k,0}^{t,j}))\denseps(Y_{2k+1}-\widehat{f}^t(\xi_{k,1}^{t,j}))\eqsp.
\end{equation}
The Monte Carlo approximations are computed using $N_{mc} =200$ and $20.000$ observations ({\em i.e.} $n=10.000$) are sampled. Figure~\ref{NPHMM:fig:a} displays the estimation $a_{\star}$ as a function of the number of iterations of the EM algorithm over $50$ independent Monte Carlo runs. The estimates converge to the true value of $a_{\star}$ after few iterations (about $25$).
\begin{figure}[H]
\centering
\includegraphics[scale=.25]{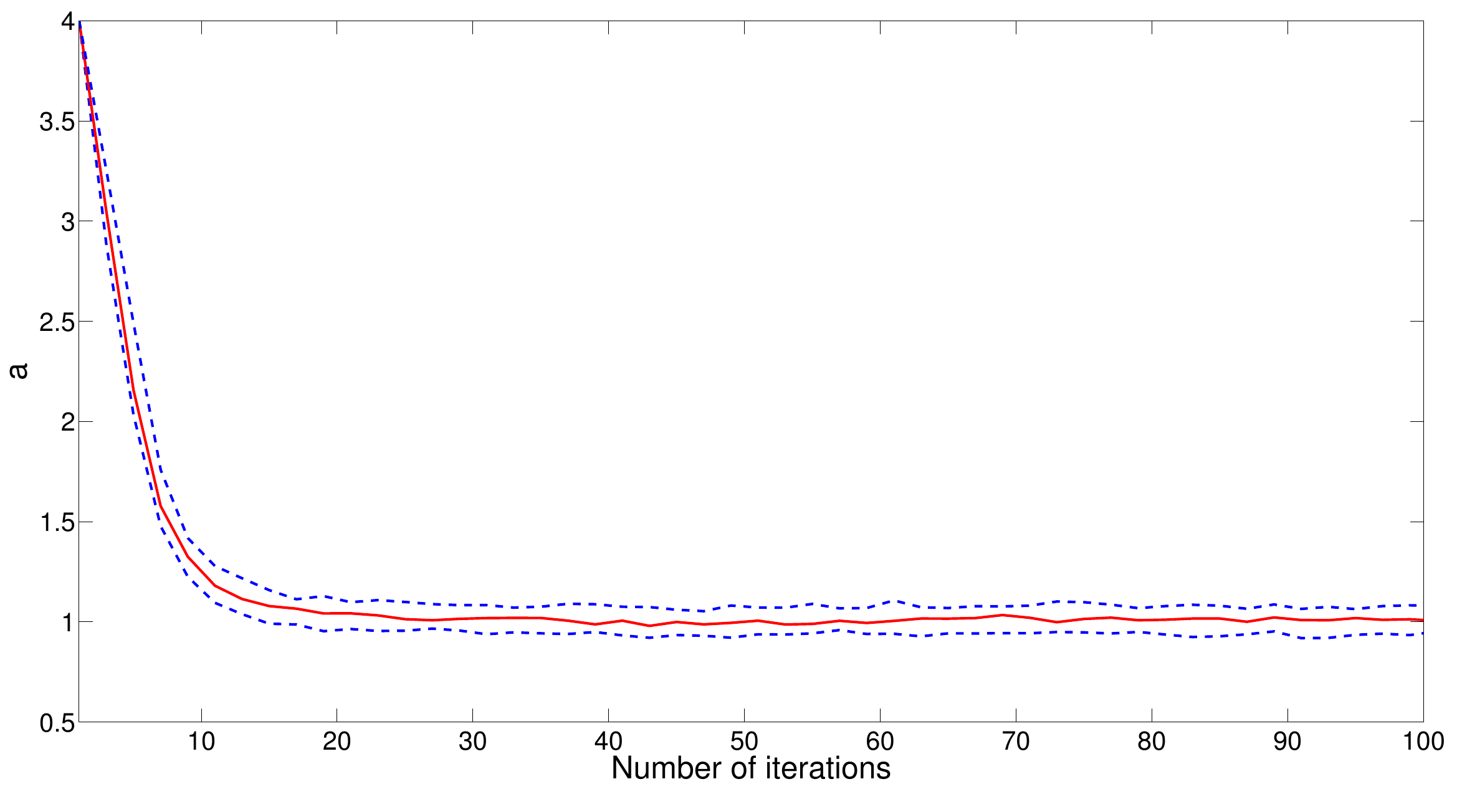}
\caption{Estimation of $a_{\star}$ as a function of the number of iterations of the EM algorithm. The true value is $a_{\star}=1$. Median (bold line) and upper and lower quartiles (dotted line) over $50$ Monte Carlo runs.}
\label{NPHMM:fig:a}
\end{figure}
Figure~\ref{NPHMM:fig:image} illustrates Corollary~\ref{NPHMM:cons:image}. It displays the estimation of $\paramstar([0,1])$ after $100$ iterations for several Monte Carlo runs. It shows that despite the variability of the estimation, the image is well estimated with few observations.
\begin{figure}[H]
\centering
\includegraphics[scale=.25]{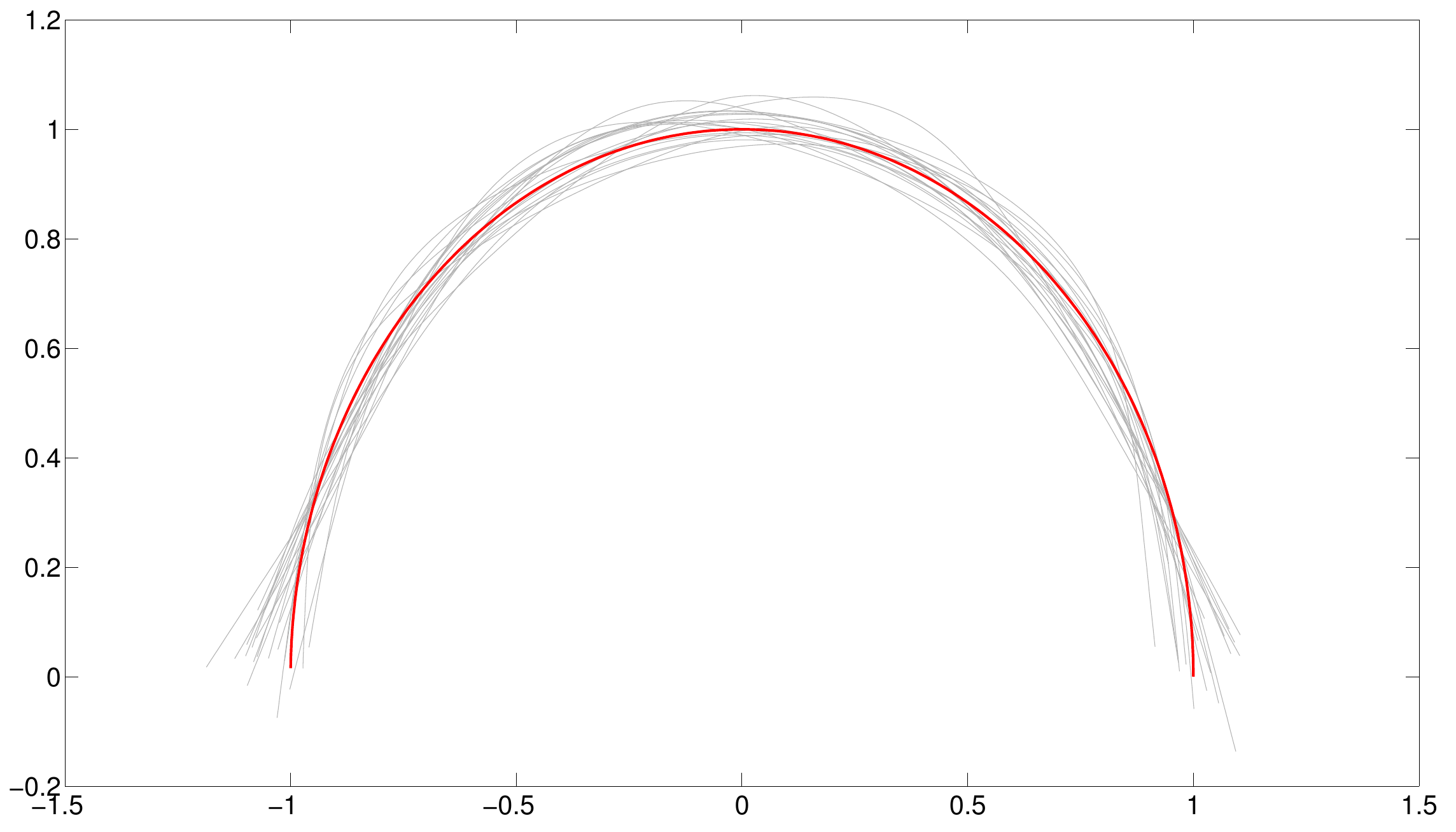}
\caption{True image $\paramstar([0,1])$ (red) and estimates after $100$ iterations of the algorithm over $100$ Monte Carlo runs (grey).}
\label{NPHMM:fig:image}
\end{figure}

\section{Proofs}
\label{NPHMM:sec:proofs}
\subsection{Proof of Proposition~\ref{NPHMM:prop:deviation:G}}
\label{sec:proof:NPHMM:prop:deviation:G}
Recall that for any probability density function $p$ on $\mathbb{R}^{b\dimY}$, $g_{p}$ is defined in \eqref{eq:defgp} by 
\begin{equation*}
g_{p} \eqdef \frac{1}{2} \ln \frac{p + p_{\star}}{2p_{\star}}\eqsp.
\end{equation*}
The proof relies on the application of Proposition~\ref{NPHMM:prop:concentration} and Proposition~\ref{NPHMM:prop:ineg:maximale} to obtain first a concentration inequality for the class of functions $\mathcal{G}_M$, where $M\ge1$, defined as:
\[
\mathcal{G}_M \eqdef \left\{g_{p_{f,\nu}} \; ;\; \dens\in\mathcal{D}_b,\; f\in\mathcal{F}\; \mbox{and}\;I(f)\le M \right\}\eqsp,
\]
where $p_{f,\nu}$ is defined by \eqref{NPHMM:eq:pf}. For any $p>0$, denote by $\rmL^{p}(\PPim)$ the set of functions $g:\mathbb{R}^{b\dimY}\to \mathbb{R}$ such that $\mathbb{E}\left[\left|g(\mathbf{Y}_0)\right|^{p}\right]<+\infty$. For any $\kappa>0$ and any set $\mathcal{G}$ of functions from $\mathbb{R}^{b\dimY}$ to $\mathbb{R}$, let $N_{[]}(\kappa,\mathcal{G},\norminf{\cdot}{\rmL^{p}(\PPim)})$ be the smallest integer $N$ such that there exists a set of functions $\left\{\left(g^{L}_{i} ,g^{U}_{i} \right)\right\}_{i=1}^{N}$  for which: 
\begin{enumerate}[a)]
\item $\norminf{g^{U}_{i} - g^{L}_{i}}{\rmL^{p}(\PPim)} \le \kappa$ for all $i\in\{1,\cdots,N\}$;
\item for any $g$ in $\mathcal{G}$, there exists $i\in\left\{1,\cdots,N \right\}$ such that 
\[
g_{i}^{L} \le g \le g_{i}^{U}\eqsp.
\]
\end{enumerate}
$N_{[]}(\kappa,\mathcal{G},\norminf{\cdot}{\rmL^{p}(\PPim)})$ is the $\kappa$-number with bracketing of $\mathcal{G}$, and $H_{[]}(\kappa,\mathcal{G},\norminf{\cdot}{\rmL^{p}(\PPim)}) \eqdef \ln N_{[]}(\kappa,\mathcal{G},\norminf{\cdot}{\rmL^{p}(\PPim)}) $  is the  $\kappa$-entropy with bracketing of $\mathcal{G}$. 
For any bounded function $g$, define
\begin{equation}
\label{NPHMM:eq:defSn}
S_{n}(g) \eqdef n \int g \,\rmd(\PP_{n} - \PPim) = \sum_{k=0}^{n-1}g(\bfY_{k}) - n \mathbb{E}[g(\bfY_0)]\eqsp.
\end{equation}

\paragraph{Application of Proposition~\ref{NPHMM:prop:concentration}}
Proposition~\ref{NPHMM:prop:concentration} is applied to the class of functions $\overline{\mathcal{G}}_M$ defined as
\[
\overline{\mathcal{G}}_M\eqdef\left\{g-\mathbb{E}\left[g(\bfY_0)\right];\; g\in\mathcal{G}_M\right\}\eqsp.
\]
\begin{enumerate}[-]
\item By H\ref{assum:pen}, there exists $C>0$ such that  for any $i\ge0$, and any $g\in \mathcal{G}_M$,
\begin{align*}
\left|g\left( \mathbf{Y}_i\right)\right|\le  C M^\upsilon \left( 1+\|\mathbf{Y}_i\|\right)&\le C M^\upsilon \left( 1+||\mathbf{\paramstar}(\mathbf{X}_i)\| + \|\boldsymbol{\epsilon}_i\|\right)\eqsp,\\
&\le C M^\upsilon \left( 1+ \|\paramstar\|_{\infty}  + \|\boldsymbol{\epsilon}_i\|\right)\eqsp,\\
&\le C M^\upsilon \left( 1+  \|\boldsymbol{\epsilon}_i\|\right)\eqsp.
\end{align*}
Define $\mathbf{U}_i \eqdef C M^\upsilon \left( 1 + \|\boldsymbol{\epsilon}_i\|\right)$. Then, the random variables  $(\mathbf{U}_i)_{i\ge 0 }$ are i.i.d. and for all $i\ge 0$,  $|g\left( \mathbf{Y}_i\right)-\mathbb{E}\left[g(\bfY_0)\right]|\le \mathbf{U}_i + \mathbb{E}\left[\mathbf{U}_0\right]$. Furthermore, 
\begin{equation*}
\mathbb{E}\left[ (\mathbf{U}_i + \mathbb{E}\left[\mathbf{U}_0\right])^{2k}\right] \le k!\nu c^{k-1}\quad \mbox{with}\quad \nu\eqdef CM^{2\upsilon}\quad\mbox{and}\quad c \eqdef CM^{2\upsilon}\eqsp.
\end{equation*}
\item On the other hand, since the random variables $(\boldsymbol{\epsilon}_k)_{k\ge0}$ are i.i.d. and $(\mathbf{X}_k)_{k\ge 0}$ is $\Phi$-mixing, $(\mathbf{Y}_k)_{k\ge0}$ is also $\Phi$-mixing with mixing coefficients $(\phi^{\mathbf{Y}}_i)_{i\ge 0} $ satisfying, for all $i\ge 1$, $\phi^{\mathbf{Y}}_i \le \phi^{\mathbf{X}}_i =\phi^{X}_{(i-1)b+1}$. Therefore $\mathbf{\Phi^{Y}} = \sum_{i\ge 1}(\phi_i^{\mathbf{Y}})^{1/2}<\infty$.
\end{enumerate}
By Proposition~\ref{NPHMM:prop:concentration}, there exists a positive constant $C$ such that for any positive $x$, 
\begin{equation}
\label{NPHMM:eq:concentration:first}
\mathbb{P}\left[\sup_{g\in\mathcal{G}_M} \left|S_n(g) \right| \ge \mathbb{E}\left[ \sup_{g\in\mathcal{G}_M} \left|S_n(g) \right|\right] + C\mathbf{\Phi^Y}\times\left(\sqrt{nx} + x\right)M^{\upsilon}\right] \le \mathrm{e}^{-x}\eqsp.
\end{equation}

\paragraph{Application of Proposition~\ref{NPHMM:prop:ineg:maximale}}
Proposition~\ref{NPHMM:prop:ineg:maximale} is used to control the inner expectation in \eqref{NPHMM:eq:concentration:first}. 
Let $r>1$. By \cite[Lemma 7.26]{massart:2007} and since the Hellinger distance is bounded by $1$,  there exists a constant $\delta$ such that for any $g = g_{p_{f,\dens}}\in\mathcal{G}_M$. 
\[
\|g\|^{2r}_{\mathrm{L}^{2r}(\PPim)} \le \delta\eqsp.
\]
By Lemma~\ref{NPHMM:prop:bracket:GM}, for any $q> 1$,  any $s>b\dimY/q$ and any $\beta>s + b\dimY(1-1/q)$, there exists a constant $c$ such that, for all $u>0$,
\begin{equation}\label{NPHMM:eq:brackets}
H_{[]}(u,\|\cdot\|_{\mathrm{L}^{2r}(\PPim)},\mathcal{G}_M) \le  c \left(\frac{M^{\upsilon(s+\beta+b\dimY/q)}}{u^{2r}} \right)^{b\dimY/s}
\end{equation}
and
\[
\varphi(\delta) \eqdef \int_{0}^{\delta} H_{[]}^{1/2}(u,\|\cdot\|_{\mathrm{L}^{2r}(\PPim)},\mathcal{G}_M)\mathrm{d}u \le cM^{(s+\beta+b\dimY/q)b\dimY\upsilon/(2 s)}  \int_{0}^{\delta}  u^{-rb\dimY/s}  \rmd u\eqsp.
\]
Choosing $\beta\le  s + b\dimY(1-1/q) +2$,  if $s$ goes to $+\infty$ then the last integral is finite, and   $(s+\beta+b\dimY/q)b\dimY\upsilon/(2s)$ converges to $b\dimY\upsilon$,  so that for any $\eta>0$ there exists a positive constant $c$ such that
\begin{equation*}
\varphi(\delta)\le cM^{b\dimY\upsilon + \eta}\eqsp.
\end{equation*} 
Finally, by Proposition~\ref{NPHMM:prop:ineg:maximale} for any $\eta>0$, there exists a constant $A$ such that for $n$ large enough
\[
\mathbb{E}\left[ \sup_{g\in\mathcal{G}_M} \left|S_n(g) \right|\right]\le A\sqrt{n}M^{b\dimY\upsilon + \eta}\eqsp.
\]
Then, by \eqref{NPHMM:eq:concentration:first}, this yields
\begin{equation}
\label{NPHMM:eq:concentration:GM}
\mathbb{P}\left[\sup_{g\in\mathcal{G}_M}| S_n (g)| \ge   c\mathbf{\Phi^{Y}}\times\left(\sqrt{nx} + x \right)M^{\upsilon}+ A\sqrt{n}M^{b\dimY\upsilon + \eta} \right] \le \mathrm{e}^{-x}\eqsp.
\end{equation}
Proposition~\ref{NPHMM:prop:deviation:G} is then proved using a peeling argument. By \eqref{NPHMM:eq:defSn} and \eqref{NPHMM:eq:concentration:GM}, for any $M\ge1$, any large enough $n$ and any $x>0$, if $\gamma=b\dimY\upsilon + \eta$,
\begin{equation}
\label{NPHMM:eq:dev:GM2}
 \mathbb{P}\left[\sup_{g\in\mathcal{G}_M} \frac{\left|\int g \ \rmd(\PP_n - \PPim)\right| }{M^{\gamma}}\ge   c\mathbf{\Phi^{Y}}\times\left(\sqrt{\frac{x} {n}}+ \frac{x}{n}\right)+ \frac{A}{\sqrt{n}} \right] \le e^{-M^{\gamma-\upsilon} x}\eqsp.
\end{equation}
We can write
\[
\mathbb{P}\left[ \sup_{f\in\mathcal{F}, \ \dens\in\mathcal{D}_b} \frac{\left|\int g_{p_{f,\dens}} \ \rmd(\PP_n - \PPim)\right| }{1\vee I(f)^{\gamma}}\ge   c\mathbf{\Phi^{Y}}\times\left(\sqrt{\frac{x} {n}}+ \frac{x}{n}\right)+ \frac{2^\gamma A}{\sqrt{n}} \right]\le P_1 + \sum_{k=0}^{+\infty}T_k\eqsp,
\]
where
\begin{align*}
P_1&\eqdef\mathbb{P} \left[  \sup_{\substack{f\in\mathcal{F};\;I(f) \le 1,\\ \dens\in\mathcal{D}_b}}  \frac{\left|\int g_{p_{f,\dens}} \ \rmd(\PP_n - \PPim)\right| }{1\vee I(f)^{\gamma}}\ge  c\mathbf{\Phi^{Y}}\times\left(\sqrt{\frac{x}{n}}+ \frac{x}{n}\right)+ \frac{2^\gamma A}{\sqrt{n}} \right]\eqsp,\\
T_k&\eqdef\mathbb{P} \left[ \sup_{\substack{f\in\mathcal{F};\; 2^k <I(f) \le 2^{k+1},\\ \dens\in\mathcal{D}_b}} \frac{\left|\int g_{p_{f,\dens}} \ \rmd(\PP_n - \PPim)\right| }{1\vee I(f)^{\gamma}}\ge   c\mathbf{\Phi^{Y}}\times\left(\sqrt{\frac{x} {n}}+ \frac{x}{n}\right)+ \frac{2^\gamma A}{\sqrt{n}} \right]\eqsp.
\end{align*}
By \eqref{NPHMM:eq:dev:GM2},
\begin{multline*}
P_1\le \mathbb{P} \left[  \sup_{g\in\mathcal{G}_1}  \left|\int g  \ \rmd(\PP_n - \PPim)\right| \ge   c\mathbf{\Phi^{Y}}\times\left(\sqrt{\frac{x}{n}}+ \frac{x}{n}\right)+ \frac{2^\gamma A}{\sqrt{n}} \right]\eqsp,\\
\le  \mathbb{P} \left[  \sup_{g\in\mathcal{G}_1}  \left|\int g  \ \rmd(\PP_n - \PPim)\right| \ge   c\mathbf{\Phi^{Y}}\times\left(\sqrt{\frac{x}{n}}+ \frac{\sqrt{c}x }{n}\right)+ \frac{A}{\sqrt{n}} \right]\le \mathrm{e}^{-x}
\end{multline*}
and for all $k\ge 0$,
\begin{multline*}
T_k\le \mathbb{P} \left[ \sup_{g\in\mathcal{G}_{2^{k+1}}} \frac{\left|\int g  \ \rmd(\PP_n - \PPim)\right| }{2^{\gamma(k+1)}}\ge \frac{c}{2^\gamma} \mathbf{\Phi^{Y}}\times\left(\sqrt{\frac{x}{n}}+ \frac{x}{n}\right)+ \frac{A}{\sqrt{n}} \right]\eqsp,\\
\le  \mathbb{P} \left[ \sup_{g\in\mathcal{G}_{2^{k+1}}} \frac{\left|\int g  \ \rmd(\PP_n - \PPim)\right| }{2^{\gamma(k+1)}}\ge c\mathbf{\Phi^{Y}}\times\left(\sqrt{\frac{x}{2^{2\gamma}n}}+ \frac{x}{2^{2\gamma}n}\right)+ \frac{A}{\sqrt{n}} \right]\le \mathrm{e}^{-2^{(\gamma-\upsilon)(k+1)}  x/2^{2\gamma}}\eqsp.
\end{multline*}
Using \eqref{NPHMM:eq:dev:GM2},
\begin{align*}
\mathbb{P}\bigg[ \sup_{f\in\mathcal{F}, \ \dens\in\mathcal{D}_b} \frac{\left|\int g_{p_{f,\dens}} \ \rmd(\PP_n - \PPim)\right| }{1\vee I(f)^{\gamma}}&\ge   c\mathbf{\Phi^{Y}}\times\left(\sqrt{\frac{x}{n}}+ \frac{x}{n}\right)+ \frac{2^\gamma A}{\sqrt{n}} \bigg]\\
&\le e^{-x} + \sum_{k=0}^{\infty} e^{-2^{(\gamma-\upsilon)(k+1)}  x/2^{2\gamma}}\\
&\le e^{-x} + \sum_{k=0}^{\infty}   e^{-(k+1) x\log (2)(\gamma-\upsilon)/2^{2\gamma} } \\
&\le e^{-x} + \frac{e^{- \alpha x }}{1-e^{- \alpha x }}\eqsp,   
\end{align*}
which concludes the proof of Proposition \ref{NPHMM:prop:deviation:G}.

\subsection{Proof of Proposition \ref{NPHMM:prop:nu:phi}}
\label{NPHMM:sec:proofs:prop:nu:phi}
Assume that $h(p_{f,\nu},p_{\paramstar,\nu_{1,\star}} )= 0$ (the proof of the  converse proposition is straightforward). Let $X'_0$ be a random variable on $\mathbb{X}$ with distribution $\nu(x)\mu(\rmd x)$. Since $\epsilon_0$ is a Gaussian random variable, $h(p_{f,\nu},p_{\paramstar,\nu_{1,\star}} )= 0$ implies that $f(X'_0)$ has the same distribution as $\paramstar(X_0)$.  

\paragraph{Proof that $f$ and $\paramstar$ have the same image in $\mathbb{R}^\dimY$.}
Let $y\in f(\mathbb{X})$, $n\ge1$ and $B(y,n^{-1})$ be the open Euclidean ball in $\mathbb{R}^\ell$ centered at $y$ with radius $n^{-1}$.  As  $y\in f(\mathbb{X})$ and  $f$ is continuous, there exists a nonempty open subset $\mathcal{O}$ of $\mathbb{R}^m$ such that $ f^{-1}(B(y,n^{-1})) = \mathcal{O} \cap \mathbb{X}$.  Since $\overline{\overset{\circ}{\mathbb{X}}} = \mathbb{X}$,  $\overset{\circ}{\mathbb{X}}$ is not empty and so is the interior of $ f^{-1}(B(y,n^{-1}))$ (which is equal to $ \mathcal{O} \cap \overset{\circ}{\mathbb{X}}$). Therefore, $\mu\left\{ f^{-1}\left(B\left(y,n^{-1}\right)\right)\right\}>0$.  Then, using that $\dens\ge \dens_-$ and that $f(X'_0)$ has the same distribution as $\paramstar(X_0)$,  
\[
\mathbb{P}\!\left\{X_0\in \paramstar^{-1}\left(B\left(y,n^{-1}\right)\right)\right\} \! =\mathbb{P}\!\left\{X'_0\in f^{-1}\left(B\left(y,n^{-1}\right)\right)\right\} \! \ge \dens_-\, \mu\left\{ f^{-1}\left(B\left(y,n^{-1}\right)\right)\right\}>0\eqsp,
\]
 Hence, $\paramstar^{-1}\left(B\left(y,n^{-1}\right)\right)$ is nonempty and for all $n\ge 1$, there exists $x_n\in\mathbb{X}$ such that $\|y-\paramstar(x_n)\|<n^{-1}$. Moreover, for all $n\ge 1$, $\paramstar(x_n)$ lies in the compact set $\paramstar(\mathbb{X})$. This implies that $y\in \paramstar(\mathbb{X})$. The proof of the converse inclusion follows the same lines.

\paragraph{Proof that $\phi$ is bijective.}
Since $f(X'_0)$ has the same distribution as $\paramstar(X_0)$, $X_0$ has the same distribution as $\phi(X'_0)$ where $\phi \eqdef \paramstar^{-1} \circ f $. By H\ref{assum:fstar} $\phi$ exists and is $\mathcal{C}^1$. We prove that $|J_{\phi}|>0$ using the following result due to \cite[Theorem 2, p.99]{evans:gariepy:1992}.
\begin{lemma}
\label{NPHMM:lem:area}
If $\phi: \mathbb{X}\to \mathbb{X}$ is Lipschitz then, for any integrable function $g$,
\[
\int_\mathbb{X}g(x)\left|J_{\phi}(x)\right|\mu(\rmd x) = \int_\mathbb{X}\sum_{x\in\phi^{-1}(\{y\})}g(x)\mu(\rmd y)\eqsp.
\]
\end{lemma}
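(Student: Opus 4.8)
The plan is to recognise this as the classical \emph{area formula} (change of variables) for Lipschitz maps in equal dimension, which is exactly \cite[Theorem 2, p.99]{evans:gariepy:1992}; at the level of detail of the paper the statement is simply invoked with this reference, but the underlying argument runs as follows. First I would reduce to $g\ge 0$: splitting $g=g^{+}-g^{-}$, each resulting right-hand integral is finite because $|J_{\phi}|$ is bounded by $(\mathrm{Lip}\,\phi)^{m}$ and $g$ is $\mu$-integrable, and $\sum_{x\in\phi^{-1}(\{y\})}|g(x)|$ is $\mu$-a.e.\ finite once the nonnegative case is known for $|g|$. For $g\ge 0$, approximating $g$ from below by simple functions and applying monotone convergence on both sides (on the right, first in the summation over $\phi^{-1}(\{y\})$, then in the integral over $y$) reduces the identity to the case $g=\mathbf{1}_{A}$, i.e.\ to showing $\int_{A}|J_{\phi}|\,\rmd\mu=\int_{\mathbb{X}}\#\big(A\cap\phi^{-1}(\{y\})\big)\,\mu(\rmd y)$ for every Borel $A\subset\mathbb{X}$.

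Next, by Rademacher's theorem $\phi$ is differentiable $\mu$-a.e.; the non-differentiability set is $\mu$-null, hence ($\phi$ being Lipschitz, so measure-zero sets have measure-zero image) can be discarded on both sides. Split $A$ into $A_{0}=A\cap\{\det D\phi=0\}$ and $A_{1}=A\cap\{\det D\phi\neq0\}$. On $A_{0}$ the left-hand integral vanishes, and so does the right-hand one: $\phi(A_{0})$ is $\mu$-null by a Sard-type estimate (cover $A_{0}$ by small balls on each of which $\phi$ is approximately affine with singular linear part, so it compresses $m$-dimensional volume), whence $\#\big(A_{0}\cap\phi^{-1}(\{y\})\big)=0$ for $\mu$-a.e.\ $y$.

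On $A_{1}$ I would use the Lipschitz linearization lemma: for each $t>1$ there is a countable Borel partition $\{E_{k}\}_{k\ge1}$ of $A_{1}$ and invertible symmetric matrices $T_{k}$ such that $\phi|_{E_{k}}$ is injective, both $T_{k}^{-1}\circ\phi|_{E_{k}}$ and $T_{k}\circ(\phi|_{E_{k}})^{-1}$ are Lipschitz with constant $\le t$, and $t^{-m}|\det T_{k}|\le|J_{\phi}(x)|\le t^{m}|\det T_{k}|$ on $E_{k}$. Since a bi-Lipschitz map distorts Lebesgue measure by a factor controlled by its Lipschitz constants, this gives $t^{-2m}\int_{E_{k}}|J_{\phi}|\,\rmd\mu\le\mu(\phi(E_{k}))\le t^{2m}\int_{E_{k}}|J_{\phi}|\,\rmd\mu$, while injectivity of $\phi|_{E_{k}}$ yields $\mu(\phi(E_{k}))=\int_{\mathbb{X}}\mathbf{1}_{\phi(E_{k})}(y)\,\mu(\rmd y)=\int_{\mathbb{X}}\#\big(E_{k}\cap\phi^{-1}(\{y\})\big)\,\mu(\rmd y)$. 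Summing over $k$ by Tonelli (all terms nonnegative) gives $t^{-2m}\int_{A_{1}}|J_{\phi}|\,\rmd\mu\le\int_{\mathbb{X}}\#\big(A_{1}\cap\phi^{-1}(\{y\})\big)\,\mu(\rmd y)\le t^{2m}\int_{A_{1}}|J_{\phi}|\,\rmd\mu$; letting $t\downarrow1$ gives equality on $A_{1}$, and combining with the trivial $A_{0}$ contribution proves the indicator case, hence the lemma.

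The genuinely substantial steps are the linearization lemma together with the bi-Lipschitz volume-distortion estimate, and the Sard-type null-set claim for $A_{0}$; these are exactly what \cite{evans:gariepy:1992} provides, which is why the lemma is stated with a citation rather than reproved in full. Everything else — the reductions to indicators, the use of Tonelli, and the passage $t\downarrow1$ — is routine.
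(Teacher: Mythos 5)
Your proposal is correct and takes essentially the same approach as the paper: the paper states this lemma with only the citation \cite[Theorem 2, p.99]{evans:gariepy:1992} and offers no proof, and you correctly identify it as the area formula for Lipschitz maps from that reference. Your sketch of the underlying argument (reduction to indicators, Rademacher's theorem, the Sard-type treatment of the set where the Jacobian vanishes, and the Lipschitz linearization with bi-Lipschitz volume distortion) is an accurate account of the standard proof behind that citation.
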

Define $A \eqdef \left\{ x\in \mathbb{X}\eqsp;\;  \forall x' \in \phi^{-1 }(\{x\}),\ |J_{\phi}(x')|>0 \right\}$.  Let $h_{1}$ be a bounded measurable real function on $\mathbb{X}$ and define $h \eqdef \mathds{1}_{A}h_1$. 
By Lemma~\ref{NPHMM:lem:area},
\begin{align*}
\mathbb{E}\left[h\circ \phi (X'_0)\right] &= \int_\mathbb{X} h_{1}(\phi(x') ) \mathds{1}_{A}(\phi(x')) \nu(x') \mu(\mathrm{d}x')\eqsp,\\
& =  \int_\mathbb{X} h_{1}(\phi(x') ) \mathds{1}_{A}(\phi(x')) \frac{\nu(x') }{|J_{\phi}(x')|} |J_{\phi}(x')| \mu(\mathrm{d}x')\eqsp,\\
& = \int_\mathbb{X} h_{1}(x)  \mathds{1}_{A}(x) \sum_{x'\in\phi^{-1}(\{x\})} \frac{\nu(x') }{|J_{\phi}(x')|}  \mu(\mathrm{d}x)\eqsp.
\end{align*}
Since $X_0$ has the same distribution as $\phi(X'_0)$,
\[
\int_\mathbb{X} h_{1}(x)\mathds{1}_{A}(x)  \dens_{1,\star}(x)\mu(\mathrm{d}x) = \int_\mathbb{X} h_{1}(x)  \mathds{1}_{A}(x) \sum_{x'\in\phi^{-1}(\{x\})} \frac{\nu(x') }{|J_{\phi}(x')|}  \mu(\mathrm{d}x)\eqsp.
\]
Applying Lemma~\ref{NPHMM:lem:area} with $g \eqdef  \mathds{1}_{|J_\phi|=0}$ implies that  $\mathds{1}_{A} = 1$ $\mu$-a.s. in $\mathbb{X}$ and, $\mu$-a.s.,
\begin{equation}
\label{NPHMM:eq:denstar=dens}
\dens_{1,\star}(x)= \sum_{x'\in\phi^{-1}(\{x\})} \frac{\dens(x') }{|J_{\phi}(x')|}\eqsp.
\end{equation}
Therefore, for $\mu$ almost every $x\in\mathbb{X}$ and for all $x'\in\phi^{-1}(\{x\})$,
\[
|J_{\phi}(x')|\ge\frac{\dens_-}{\dens_+}\eqsp.
\]
 By continuity of $J_\phi$ and using that $\overline{\overset{\circ}{\mathbb{X}}} = \mathbb{X}$, $|J_{\phi}(x)|>0$ for all $x\in\mathbb{X}$. Therefore, $\phi$ is locally invertible and, since $\mathbb{X}$ is compact, simply connected and arcwise connected, $\phi$ is bijective by \cite[Theorem $1.8$, p.$47$]{ambrosetti:prodi:1995}. Then \eqref{NPHMM:eq:denstar=dens} ensures that for $\mu$ almost every $x\in\mathbb{X}$, 
\begin{equation*}
\nu_{1,\star}(\phi(x))   =   \frac{\nu(x) }{|J_{\phi}(x)|} \eqsp, 
\end{equation*}
which concludes the proof of Proposition \ref{NPHMM:prop:nu:phi}.

\subsection{Proof of Proposition~\ref{NPHMM:prop:ident:Markov} and Corollary~\ref{NPHMM:cor:markov}}
\label{NPHMM:sec:proof:ident:Markov}
\paragraph{Proof of Proposition~\ref{NPHMM:prop:ident:Markov}}
The proof of \eqref{NPHMM:eq:Q:phi} follows the same lines as the proof of Proposition~\ref{NPHMM:prop:nu:phi}. Let $(X'_0,X'_1)$ be a random variable on $\mathbb{X}^2$ with probability density $\nu(x)q(x,x')$ on $\mathbb{X}^2$.  $h(p_{f,\nu^2},p_{\paramstar,\nu_{2,\star}} )= 0$ implies that $h(p_{f,\nu },p_{\paramstar,\nu_{\star} } )= 0$ and, by Proposition~\ref{NPHMM:prop:nu:phi}, $f(\mathbb{X}) = \paramstar(\mathbb{X})$  and $\phi = \paramstar^{-1} \circ f $ is bijective. Moreover, since $(\epsilon_0,\epsilon_1)$ has a Gaussian distribution, $h(p_{f,\nu^2},p_{\paramstar,\nu_{\star}^2} )= 0$ implies that $(\phi(X'_0),\phi(X'_1))$ has the same distribution as $ (X_0,X_1)$ so that for any $x$ in $\mathbb{X}$ and any bounded measurable function $f$ on $\mathbb{X}$, 
\[
\mathbb{E}\left[\phi(X'_1)\middle|X'_0 = \phi^{-1}(x)\right] =  \mathbb{E}\left[X_1\middle|X_0 = x\right]\eqsp.
\]
Following the proof of Proposition~\ref{NPHMM:prop:nu:phi}, this gives \eqref{NPHMM:eq:Q:phi}.

\paragraph{Proof of Corollary~\ref{NPHMM:cor:markov}}
Assume that 
\[
q_\star(x,x') = c_\star(x) \rho_{\star}(||x-x'||) \quad \mbox{and} \quad q (x,x') = c(x) \rho(||x-x'||)\eqsp.
\]
By \eqref{NPHMM:eq:Q:phi}, 
\begin{equation}
\label{NPHMM:eq:Q:phi:2}
c(x)\rho(||x-x'||)= |J_{\phi}(x')| c_\star(\phi(x)) \rho_{\star}(||\phi(x)-\phi(x')||)\;.
\end{equation} 
Applying \eqref{NPHMM:eq:Q:phi:2} with $x=x'$ implies $|J_{\phi}(x)| = \frac{\rho(0)}{\rho_{\star}(0)}\frac{c(x)}{c_\star(\phi(x))}$. Therefore, 
\begin{equation*} 
\frac{|J_{\phi}(x)|}{|J_{\phi}(x')|}= \frac{c(x)c_\star(\phi(x'))}{c(x')c_\star(\phi(x))} =\frac{|J_{\phi}(x')|}{|J_{\phi}(x)|}
\end{equation*} 
and then, there exists a constant $C$ such that for all $x\in\mathbb{X}$, $|J_{\phi}(x)|=C$. As $\phi$ is bijective we may write
\[
\mu(\mathbb{X}) = \mu(\phi(\mathbb{X})) = \int_{\phi(\mathbb{X})}\mu(\rmd x) = \int_{\mathbb{X}}|J_{\phi}(x)|\mu(\rmd x) = C \mu(\mathbb{X})\eqsp,
\]
which leads to $C=1$ since $0<\mu(\mathbb{X})<\infty$. By \eqref{NPHMM:eq:Q:phi:2}, for any $x$ and $x'$ in $ \mathbb{X}$,
\begin{equation}
\label{NPHMM:eq:q:qstar}
\rho(||x-x'||)  = \rho_{\star}(||\phi(x) - \phi(x')||).
\end{equation}
Let $x_0\in\overset{\circ}{\mathbb{X}}$, $y_0 = \phi(x_0)$ and $d_0,d_0'>0$ be such that $B(x_0,d_0) \eqdef \{x\in\mathbb{R}^m\;,\; ||x_0 - x||< d_0\} \subset \mathbb{X}$  and $\phi(B(x_0,d_0)) \subset B(y_0,d_0')$.

 Let $d<d_0$ and denote by $S(x_0,d)$ the set  $S(x_0,d)\eqdef \{x\in\mathbb{R}^m\;,\; ||x_0 - x ||=d\}$. As $\rho_\star$ is one-to-one, write $F = \rho_\star^{-1} \circ \rho$. \eqref{NPHMM:eq:q:qstar} implies that  
 $\phi(S(x_0,d)) \subset S(y_0,F(d))$. Furthermore, using the compactness and the connectivity of $S(x_0,d)$,  $\phi(S(x_0,d)) = S(y_0,F(d))$ which, together with the continuity of $\phi$, guarantees that $\phi(B(x_0,d)) = B(y_0,F(d))$. Finally, because $\phi$ preserves the volumes, for any $d<d_0$, $F(d) = d$ and for any $x \in\mathbb{X}$ and any $x'\in B(x,d_0)$, $||x -x'|| =||\phi(x) - \phi(x')||$.  The proof is concluded using the connectivity of $\mathbb{X}$.

\appendix
\section{Concentration results for the empirical process of unbounded functions}
\label{NPHMM:sec:concentration}

Proposition \ref{NPHMM:prop:concentration} provides a concentration inequality on the empirical process over a class of functions $\mathcal{G}$ for which $|g(Z_i)|$ can be bounded uniformly in $g\in\mathcal{G}$  by an independent process $U_i$ with bounded moments. This unusual condition is more general than \cite[Theorem 3]{samson:2000} which considered a uniformly bounded class of functions.
\begin{proposition}\label{NPHMM:prop:concentration}
Let $(Z_n)_{n\ge 0}$ be a $\Phi$-mixing process taking values in a set $\mathcal{Z}$. Assume that the $\Phi$-mixing coefficients associated with  $(Z_n)_{n\ge 0}$ satisfy:
\[
\mathbf{\Phi } \eqdef \sum_{i=1}^{\infty}\phi_{i}^{1/2} <\infty\eqsp.
\]
Let $\mathcal{G}$ be some countable class of real valued measurable functions defined  on $\mathcal{Z}$.  Assume that there exists a sequence of independent random variables $(U_i)_{i\ge 0}$ such that:
\begin{enumerate}[-]
\item for any $g$ in $\mathcal{G}$ ,
\begin{equation}
\label{NPHMM:assum:G}
|g(Z_i) |\le U_i \ a.s.\eqsp;
\end{equation}
\item there exists some positive numbers $\nu$ and $c$ such that,
  for any $k\ge 1$:
\begin{equation}
\label{NPHMM:assum:Ui}
\sum_{i=0}^{n-1} \mathbb{E}\left[U_i^{2k}\right] \le k!n\nu c^{k-1}\eqsp.
\end{equation}
\end{enumerate}
Then,  for any positive $x$, 
\[
\mathbb{P}\left[S_n \ge 2\mathbf{\Phi }\times\left( 2\sqrt{n\nu x} + \sqrt{c}x \right)\right] \le \mathrm{e}^{-x}\eqsp,\] 
where
\[
S_n = \sup_{g\in\mathcal{G}} \left|\sum_{i=0}^{n-1}g(Z_i) \right|-\mathbb{E}\left[ \sup_{g\in\mathcal{G}} \left|\sum_{i=0}^{n-1}g(Z_i) \right|\right]\eqsp.
\]
\end{proposition}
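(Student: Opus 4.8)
The plan is to reduce the problem to the uniformly bounded setting covered by \cite[Theorem~3]{samson:2000} through a truncation argument, and to exploit the \emph{independence} of $(U_i)_{i\ge 0}$ to absorb the error the truncation introduces. First I would reduce to a finite class: since $\mathcal{G}$ is countable, writing it as an increasing union of finite subclasses, the associated suprema increase to $S_n$, and \eqref{NPHMM:assum:Ui} (with $k=1$) ensures $\mathbb{E}[\sup_{g\in\mathcal{G}}|\sum_i g(Z_i)|]<\infty$, so by monotone convergence it suffices to treat finite $\mathcal{G}$ with constants independent of $\#\mathcal{G}$. Replacing $\mathcal{G}$ by $\mathcal{G}\cup(-\mathcal{G})$ — still admissible, since $|(-g)(Z_i)|=|g(Z_i)|\le U_i$ — turns $\sup_g|\sum_i g(Z_i)|$ into an ordinary supremum.

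Next I would fix a truncation level $R>0$ and split $g=g^{(R)}+r^{(R)}$, with $g^{(R)}\eqdef(g\wedge R)\vee(-R)$, so that $\|g^{(R)}\|_\infty\le R$ and $(g^{(R)}(Z_i))^2\le U_i^2$ by \eqref{NPHMM:assum:G}, hence $\sum_i\mathbb{E}[(g^{(R)}(Z_i))^2]\le n\nu$ using \eqref{NPHMM:assum:Ui}. Applying \cite[Theorem~3]{samson:2000} to the bounded class $\{g^{(R)}:g\in\mathcal{G}\}$ would give a Bernstein-type deviation, roughly $\mathbb{P}[\sup_g|\sum_i(g^{(R)}(Z_i)-\mathbb{E}g^{(R)}(Z_i))|\ge\mathbb{E}[\,\cdot\,]+c_1\mathbf{\Phi}(\sqrt{n\nu y}+Ry)]\le e^{-y}$ for an absolute constant $c_1$; a short bookkeeping step would re-express this in terms of $S_n$ itself, using that $\sum_i\mathbb{E}g^{(R)}(Z_i)$ is deterministic and therefore cancels in $\sup_g(\cdot)-\mathbb{E}\sup_g(\cdot)$ up to the contribution of $r^{(R)}$.

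For the remainder, $|r^{(R)}(Z_i)|=(|g(Z_i)|-R)_+\le U_i\mathbf{1}_{\{U_i>R\}}$ uniformly in $g$, so $\sup_g|\sum_i(r^{(R)}(Z_i)-\mathbb{E}r^{(R)}(Z_i))|\le\sum_i U_i\mathbf{1}_{\{U_i>R\}}+\sum_i\mathbb{E}[U_i\mathbf{1}_{\{U_i>R\}}]$, a sum of \emph{independent} nonnegative variables. The moment condition \eqref{NPHMM:assum:Ui} makes $\mathbb{E}[U_i\mathbf{1}_{\{U_i>R\}}]$ super-exponentially small in $R$ and yields a Bennett/Bernstein exponential tail for $\sum_i U_i\mathbf{1}_{\{U_i>R\}}$; I would then choose $R=R_n(x)$ of order $\sqrt{c}\,(\sqrt{x}\vee 1)$, which makes the whole remainder of smaller order than $2\mathbf{\Phi}(2\sqrt{n\nu x}+\sqrt{c}x)$. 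A union bound over the bounded part and the remainder, followed by a final adjustment of constants, then gives $\mathbb{P}[S_n\ge 2\mathbf{\Phi}(2\sqrt{n\nu x}+\sqrt{c}x)]\le e^{-x}$.

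The main obstacle I expect is the constant-chasing at the end: one must couple the truncation level $R$ to $(x,n,\nu,c)$ so that, after the union bound, the right-hand side is \emph{exactly} $e^{-x}$ and the left-hand side has precisely the stated form $2\mathbf{\Phi}(2\sqrt{n\nu x}+\sqrt{c}x)$. In particular the version of \cite[Theorem~3]{samson:2000} invoked for bounded classes must be genuinely Bernstein-type in the bound $R$ (with $R$ multiplying only the linear-in-$y$ term, not $\sqrt{y}$), since otherwise the truncation level cannot be raised high enough to make the remainder negligible without inflating the variance term $\sqrt{n\nu x}$. The measurability and recentring manipulations, and the passage back to infinite $\mathcal{G}$, are routine by comparison.
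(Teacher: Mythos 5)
Your truncation strategy diverges from the paper's argument in a way that creates a genuine gap at the remainder step. The problem is the choice $R=R_n(x)$ of order $\sqrt{c}\,(\sqrt{x}\vee 1)$: under \eqref{NPHMM:assum:Ui} the tails of the $U_i$ are only Gaussian-type, so $\sum_{i=0}^{n-1}\mathbb{E}\left[U_i\mathbf{1}_{\{U_i>R\}}\right]$ is of order $n\sqrt{\nu}\,\rme^{-R^2/(4c)}\sim n\sqrt{\nu}\,\rme^{-x/4}$, which for fixed $x$ grows linearly in $n$ and is therefore \emph{not} of smaller order than $2\mathbf{\Phi}\left(2\sqrt{n\nu x}+\sqrt{c}x\right)$; likewise $\mathbb{P}\left(\max_i U_i>R\right)\lesssim n\rme^{-R^2/(2c)}$ is not $O(\rme^{-x})$ uniformly in $n$. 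To make the remainder negligible you are forced to take $R\gtrsim\sqrt{c\,(x+\ln n)}$, and then the linear term delivered by the bounded-class inequality becomes of order $\mathbf{\Phi}\sqrt{c\ln n}\;x$ rather than $\mathbf{\Phi}\sqrt{c}\,x$. This is not a matter of adjusting constants: truncation arguments for unbounded empirical processes systematically pay this logarithmic (or $\|\max_i U_i\|$-type) price in the linear term, so the stated inequality cannot be reached along this route.

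The paper avoids truncation entirely. It does not invoke \cite[Theorem~3]{samson:2000} as a black box but reuses the intermediate step of its proof, which bounds the log-Laplace transform of $S_n$ by
\[
\exp\left(\psi_{S_n}(\lambda/4)\right)\le \mathbb{E}\left[\exp\left(\lambda^2\tfrac{\mathbf{\Phi}^2}{4}V^2\right)\right]^{1/2}\exp\left(\lambda^2\tfrac{\mathbf{\Phi}^2}{8}\mathbb{E}\left[V^2\right]\right)\eqsp,\qquad V^2=\sum_{i}U_i^2\eqsp,
\]
an inequality that never uses uniform boundedness of $\mathcal{G}$, only the envelope \eqref{NPHMM:assum:G}. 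The independence of the $U_i$ then factorizes $\mathbb{E}\left[\exp\left(\lambda^2\mathbf{\Phi}^2V^2/4\right)\right]$, the moment condition \eqref{NPHMM:assum:Ui} sums the resulting exponential series into a Bernstein-type bound $\psi_{S_n}(\lambda)\le \nu'\lambda^2/(2(1-c'\lambda))$ with $\nu'=8n\nu\mathbf{\Phi}^2$ and $c'=2\mathbf{\Phi}\sqrt{c}$, and \cite[Lemmas~2.3 and~2.4]{massart:2007} convert this into exactly the stated tail bound. If you want to salvage your decomposition, the place where the independence of the $U_i$ must enter is inside the Laplace-transform bound (as above), not in a separate union-bounded remainder term.
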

\begin{proof}
For any real valued random variable and for any real random variable $X$, define $\psi_{X}(\lambda) \eqdef \ln\left( \mathbb{E}\left[\exp\left(\lambda X\right) \right] \right) $,
Following the proof of \cite[Theorem 3]{samson:2000} together with the discussion about the dependence structure in \cite[Section 2]{samson:2000}, we have
\begin{equation}
\exp\left(\psi_{S_n}\left(\frac{\lambda}{4}\right)\right)\le \mathbb{E}\left[\exp\left[ \lambda^2 \frac{\mathbf{\Phi }^2}{4}V^2\right]  \right]^{\frac{1}{2}} \exp\left[ \lambda^2\frac{\mathbf{\Phi }^2}{8} \mathbb{E}\left[V^2 \right]\right]\;,
\end{equation} 
where $V^2 \eqdef \sum_{i=1}^{n}U_i^2$. Using \eqref{NPHMM:assum:G} and by independence of the $(U_i)_{i\ge 0}$,
\begin{align*}
\exp\left(\psi_{S_n}\left(\frac{\lambda}{4}\right)\right)&\le \mathbb{E}\left[\exp\left[ \lambda^2 \frac{\mathbf{\Phi }^2}{4}\sum_{i=1}^{n} U_i^2\right]  \right]^{\frac{1}{2}} \exp\left[ \lambda^2\frac{\mathbf{\Phi }^2}{8} \sum_{i=1}^{n} \mathbb{E}[U_i^2]\right]\;,\\
&\le\prod_{i=1}^{n} \mathbb{E}\left[\exp\left[ \lambda^2 \frac{\mathbf{\Phi }^2}{4}  U_i^2\right] \right]^{\frac{1}{2}} \exp\left[ \lambda^2\frac{\mathbf{\Phi }^2}{8} \sum_{i=1}^{n} \mathbb{E}[U_i^2]\right]\eqsp.
\end{align*}
Thus,
\[
\psi_{S_n}(\lambda/4)\le \frac{1}{2} \sum_{i=1}^{n} \ln\left\{\mathbb{E}\left[\exp\left(\lambda^2 \frac{\mathbf{\Phi }^2}{4}  U_i^2\right)\right]\right\} + \lambda^2\frac{\mathbf{\Phi }^2}{8}\sum_{i=1}^{n} \mathbb{E}\left[U_i^2\right]\eqsp.
\]
Since for any $u>0$, $\ln(u)\le u-1$, this yields
\begin{equation*}
\psi_{S_n}(\lambda/4)\le  \frac{1}{2}\sum_{k=1}^{\infty}\frac{1}{k!}\left[ \lambda^2 \frac{\mathbf{\Phi }^2}{4} \right]^{k}  \sum_{i=1}^{n}\mathbb{E}\left[U_i^{2k}\right] + \lambda^2\frac{\mathbf{\Phi }^2}{8} \sum_{i=1}^{n} \mathbb{E}\left[U_i^2\right]\eqsp.
\end{equation*}
Then, by \eqref{NPHMM:assum:Ui},
\begin{equation*}
\psi_{S_n}(\lambda/4) \le  n\nu \left[ \lambda^2 \frac{\mathbf{\Phi }^2}{4} \right] \frac{1}{2}\sum_{k=0}^{\infty}\left[ \lambda^2 \frac{\mathbf{\Phi }^2}{4}c \right]^{k}+\left[ \lambda^2\frac{\mathbf{\Phi }^2}{8} \nu\right]\eqsp.
\end{equation*}
If $0<\lambda^2  \mathbf{\Phi }^2c/4<1$,
\begin{align*}
\psi_{S_n}(\lambda/4) &\le n\nu   \lambda^2 \frac{\mathbf{\Phi }^2}{8} \frac{1}{1-\lambda^2  \frac{\mathbf{\Phi }^2}{4} c} + n\nu\lambda^2\frac{\mathbf{\Phi }^2}{8}\eqsp,\\
&\le n\nu   \lambda^2 \frac{\mathbf{\Phi }^2}{4}  \frac{1}{1-\lambda^2  \frac{\mathbf{\Phi }^2}{4} c}\eqsp.
\end{align*}
Define $\nu' \eqdef 8 n\nu \mathbf{\Phi }^2 $  and $c' \eqdef 2\mathbf{\Phi }  \sqrt{c} $. Therefore,
\begin{equation} 
\psi_{S_n}(\lambda/4) \le \frac{\nu'(\lambda/4)^2}{2(1-c'(\lambda/4))}\eqsp.
\end{equation}
Hence, for all $0<\lambda <1/c'$,
\begin{equation}
\label{NPHMM:eq:Bernstein}
\psi_{S_n}(\lambda ) \le \frac{\nu'\lambda^2}{2(1-c'\lambda)}\;.
\end{equation}
By the Bernstein type inequality \eqref{NPHMM:eq:Bernstein}, \cite[Lemma 2.3]{massart:2007} gives, for any measurable set $A\subset\Omega$ with $\mathbb{P}(A) >0$,
\[
\mathbb{E}\left[S_n |A\right]\le \sqrt{2\nu' \ln\left(\frac{1}{\mathbb{P}(A)}\right)} + c' \ln\left(\frac{1}{\mathbb{P}(A)}\right)\eqsp. 
\]
Hence, by  \cite[Lemma 2.4]{massart:2007}, for any positive $x$, 
\[
\mathbb{P}\left[S_n \ge \sqrt{2\nu'x} + c'x \right] \le \mathrm{e}^{-x}\eqsp.
\]
\end{proof}
Proposition \ref{NPHMM:prop:ineg:maximale} below provides a control on the expectation of the empirical process. It introduces a $\beta$-mixing condition (see \cite{dedecker:2009}) which is weaker than the $\Phi$-mixing condition considered in Proposition~\ref{NPHMM:prop:concentration}. 
The $\beta$-mixing coefficient between two $\sigma$-fields $\mathcal{U},\mathcal{V}\subset \mathcal{E}$ is defined in \cite{dedecker:2009} by
\[
\beta(\mathcal{U},\mathcal{V}) \eqdef \frac{1}{2}\sup \sum_{(i,j)\in I\times J}\left|\mathbb{P}\left(U_i\cap V_j \right) -\mathbb{P}(U_i)\mathbb{P}(V_j)\right|\eqsp,
\]  
where the supremum is taken over all finite partitions $(U_i)_{i\in I}$ and $(V_j)_{j\in J}$ respectively $\mathcal{U}$ and $\mathcal{V}$ measurable.
The corresponding mixing coefficients $(\beta_i)_{i\ge 0}$ associated with a process $(X_k)_{k\ge0}$ satisfy  $\beta_i <\phi_i$ for all $i\ge 1$.

\begin{proposition}\label{NPHMM:prop:ineg:maximale}
Let $(Z_i)_{i\ge 0}$ be a stationary process taking values in a Polish space $\mathcal{Z}$ and let $\PPim$ be the distribution of $Z_0$. Assume that the sequence $(Z_i)_{i\ge0}$ is $\beta$-mixing and that
\[
\sum_{i=1}^{\infty}\beta_i <\infty\eqsp.
\]
Let $\mathcal{G}$ be a countable class of functions on $\mathcal{Z}$. Assume that there exist $r>1$ and $\delta>0$ such that for any $g\in \mathcal{G}$,
\[
||g||_{\mathrm{L}^{2r}(\PPim)} \eqdef \mathbb{E} \left[g(Z_0)^{2r}\right]^{1/2r} \le \delta\eqsp.
\]
Assume also that the bracketing function  satisfies
\[
\int_{0}^{1} \sqrt{H_{[]}(u,||\cdot||_{\mathrm{L}^{2r}(\PPim)},\mathcal{G})}\mathrm{d}u<\infty\eqsp.
\]
Then,
\[
\varphi(\delta) := \int_{0}^{\delta} \sqrt{H_{[]}(u,||\cdot||_{\mathrm{L}^{2r}(\PPim)},\mathcal{G})}\mathrm{d}u
\]
is finite and there exists a constant $A$ such that for $n$ big enough
\begin{equation}
\mathbb{E}\left[\sup_{g\in\mathcal{G}} |S_n(g)| \right]\le \sqrt{n}A\varphi(\delta)\eqsp,
\end{equation}
where, for all $g\in\mathcal{G}$, $S_n(g) =\sum_{i=0}^{n-1} g(Z_i) - n\mathbb{E}\left[g(Z_0)\right]$.
\end{proposition}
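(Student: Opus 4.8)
The plan is to obtain the bound as a direct consequence of the maximal inequality for the empirical process of absolutely regular (that is, $\beta$-mixing) sequences proved in \cite{doukhan:massart:rio:1995}. As a preliminary remark, since $\mathcal{G}$ is countable the supremum $\sup_{g\in\mathcal{G}}|S_n(g)|$ is a genuine random variable and no outer-expectation subtlety occurs; moreover $\varphi(\delta)$ is finite because $u\mapsto\sqrt{H(u,\|\cdot\|_{\mathbb{L}_{2r}(\PPim)},\mathcal{G})}$ is assumed integrable on $(0,1)$ and $\delta$ can be taken at most $1$ in the intended applications.

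First I would verify that the hypotheses made here are exactly those required in \cite{doukhan:massart:rio:1995}: $(Z_i)_{i\ge0}$ is a stationary sequence with values in a Polish space, it is $\beta$-mixing with $\sum_{i\ge1}\beta_i<\infty$, and the class $\mathcal{G}$ satisfies a uniform $\mathbb{L}_{2r}(\PPim)$-bound $\delta$ for some $r>1$ together with the square-root integrability of its bracketing entropy. Under these assumptions \cite{doukhan:massart:rio:1995} provides a constant $A$, depending only on $r$ and on $\sum_{i\ge1}\beta_i$, such that $\mathbb{E}[\sup_{g\in\mathcal{G}}|S_n(g)|]\le\sqrt{n}\,A\,\varphi(\delta)$ for all $n$ large enough; this is precisely the statement to be proved.

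For completeness I would recall why such an inequality holds. One runs a chaining argument along a decreasing sequence of bracketing nets of $\mathcal{G}$ at geometric scales, the $j$-th net having $\exp H(2^{-j}\delta,\|\cdot\|_{\mathbb{L}_{2r}(\PPim)},\mathcal{G})$ brackets. To bound the partial sums associated with each bracket function one cuts the index set $\{0,\dots,n-1\}$ into consecutive blocks of a suitable length $q=q(n)$, couples the sequence of blocks with an independent copy of itself at a cost controlled by $\sum_{i\ge1}\beta_i$, and then applies a Rosenthal/Bernstein-type moment inequality to the independent blocks. The exponent $2r>2$ enters here: the partial sums of a $\beta$-mixing sequence have heavier moments than in the i.i.d.\ case and controlling the bracket functions through the coupling requires integrability strictly beyond $\mathbb{L}_2(\PPim)$. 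Summing the resulting estimates over all chaining scales and then optimizing $q(n)$ yields the $\sqrt{n}\,\varphi(\delta)$ bound, the optimization being the reason for the restriction to $n$ large enough.

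The step I expect to be the main obstacle is the bookkeeping needed to match the norm with respect to which brackets are measured to the norm actually used in \cite{doukhan:massart:rio:1995}: that reference formulates its entropy condition using a norm built from the $2r$-th moment under $\PPim$ (possibly tempered by the mixing coefficients), and one must check that $\|\cdot\|_{\mathbb{L}_{2r}(\PPim)}$ dominates it up to a constant depending only on $r$ and $\sum_{i\ge1}\beta_i$, so that a bracket of $\mathbb{L}_{2r}(\PPim)$-width $u$ is admissible there as well. Once this identification is in place the conclusion follows, with $A$ and the threshold on $n$ read off from the cited inequality.
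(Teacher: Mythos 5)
Your proposal matches the paper's proof, which consists precisely of invoking the maximal inequality in the remark following Theorem~3 of Doukhan, Massart and Rio (1995) after checking that the stationarity, $\beta$-mixing summability, uniform $\mathbb{L}_{2r}(\PPim)$ bound and bracketing-entropy integrability hypotheses are in force. The additional chaining/coupling sketch and the norm-comparison remark are sound but not needed beyond what the citation already delivers.
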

\begin{proof}
This is a direct application of the remark following \cite[Theorem 3]{doukhan:massart:rio:1995}.
\end{proof}

\section{Entropy of the class $\mathcal{G}_M$}
\label{NPHMM:sec:entropy}

\begin{lemma}
\label{NPHMM:prop:bracket:GM}
For any $q> 1$,  any $s>b\dimY/q$ and any even integer $\beta$,  provided that $\beta>s + b\dimY(1-1/q)$, there exists a constant $C$ such that for all $u>0$,
\begin{equation}\label{NPHMM:eq:brackets}
H_{[]}(u,||\cdot||_{\mathrm{L}^{2r}(\PPim)},\mathcal{G}_M) \le  C \left(\frac{M^{\upsilon(s+\beta+b\dimY/q)}}{u^{2r}} \right)^{b\dimY/s}\eqsp.
\end{equation}
\end{lemma}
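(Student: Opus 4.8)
Fix $M\ge 1$. By H\ref{assum:pen}, $I(f)\le M$ forces $\|f\|_\infty\le CM^{\upsilon}$, hence $p_{f,\dens}=K_b\ast\mu_{f,\dens}$, where $K_b(\mathbf y)\eqdef\prod_{k=0}^{b-1}\varphi(y_k)$ is the fixed Gaussian kernel on $\mathbb R^{b\dimY}$ and $\mu_{f,\dens}\eqdef\mathbf f_\sharp(\dens\,\mu^{\otimes b})$ is a positive measure of total mass at most $\nu_+\mu(\mathbb X)^{b}$ supported in the ball $B_M\eqdef B(0,c_1M^{\upsilon})$. The plan is: (i) reduce the bracketing entropy of $\mathcal G_M$ to that of $\mathcal P_M\eqdef\{p_{f,\dens}:\dens\in\mathcal D_b,\ f\in\mathcal F,\ I(f)\le M\}$ in a weighted $\mathbb L_{p'}$ sense; (ii) show, via the convolution representation, that $\mathcal P_M$ lies in a ball of $W^{d,p'}$ whose size is controlled by $M^{\upsilon}$; (iii) invoke the classical Sobolev-ball bracketing-entropy estimate and track the $M$-dependence.

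\emph{Step (i).} Since $p\mapsto g_p=\tfrac12\ln\tfrac{p+p_\star}{2p_\star}=\ln\bigl(\sqrt{\bar p}/\sqrt{p_\star}\bigr)$, with $\bar p\eqdef(p+p_\star)/2$, is nondecreasing, a bracket $p^L\le p_{f,\dens}\le p^U$ yields the bracket $g_{p^L}\le g_{p_{f,\dens}}\le g_{p^U}$; using $0\le\ln(1+t)\le t$ and $\bar p^L\ge p_\star/2$ one gets the pointwise bound $0\le g_{p^U}-g_{p^L}\le\sqrt2\,\bigl(\sqrt{\bar p^U}-\sqrt{\bar p^L}\bigr)/\sqrt{p_\star}$, which already makes the $p_\star$-weight cancel in $\int(g_{p^U}-g_{p^L})^2\,\rmd\PPim$ (van de Geer's square-root device, see \cite{vandegeer:2000}). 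For the exponent $2r>2$ one splits $\mathbb R^{b\dimY}$ into $\{\|\mathbf y\|\le T\}$ and its complement with $T\asymp\sqrt{\log(1/u)}$: on the complement, $g_{p^U}-g_{p^L}\lesssim1+\|\mathbf y\|^2$ while $p_\star$ has Gaussian decay, so its contribution to $\|g_{p^U}-g_{p^L}\|_{\mathbb L_{2r}(\PPim)}$ is $\le u$; on $\{\|\mathbf y\|\le T\}$ one uses the uniform bound $\|p^U\|_\infty\le C$ (from the convolution representation, $M$-free) and $p_\star^{-1}\lesssim\mathrm e^{c\|\mathbf y\|^2}\lesssim u^{-c'}$ there, so that it suffices to take $p$-brackets that are small in $\mathbb L_{p'}(B(0,T))$ up to a fixed power of $u$. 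Hence the problem reduces to bracketing $\mathcal P_M$ in $\mathbb L_{p'}$ on balls of radius $O(M^{\upsilon})+O(\sqrt{\log(1/u)})$.

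\emph{Steps (ii)--(iii).} Differentiating under the convolution, $D^\alpha p_{f,\dens}=(D^\alpha K_b)\ast\mu_{f,\dens}$, so Young's inequality gives $\|D^\alpha p_{f,\dens}\|_{\mathbb L_{p'}(\mathbb R^{b\dimY})}\le\|D^\alpha K_b\|_{\mathbb L_{p'}}\,\nu_+\mu(\mathbb X)^{b}$ for all $|\alpha|\le d$, a bound independent of $M$; thus, on a ball $B$ of radius $\rho_M\asymp M^{\upsilon}$, the restriction of $\mathcal P_M$ lies in a ball of $W^{d,p'}(B)$ whose radius, after rescaling $B$ to the unit ball, is $\lesssim\rho_M^{b\dimY/p'}$. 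Provided $s'>b\dimY/p'$ and $d>s'+b\dimY(1-1/p')$ (so the embedding underlying the estimate is valid), the Birman--Solomjak bracketing-entropy bound for Sobolev balls on the unit ball, together with the rescaling back to $B$ (which contributes further powers of $\rho_M\asymp M^{\upsilon}$ through the volume and the norm), gives $H(u,\|\cdot\|_{\mathbb L_{p'}(B)},\mathcal P_M|_{B})\le C\bigl(M^{\upsilon(s'+d+b\dimY/p')}u^{-1}\bigr)^{b\dimY/s'}$. Feeding this into Step (i) (which trades $u$ for a fixed power of $u$, harmless for the stated exponent after renaming constants) and adjoining the trivial bracket outside the ball yields the announced estimate \eqref{NPHMM:eq:brackets}.

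The main obstacle is Step (i): carrying the passage from a $p$-bracket to a $g$-bracket through to the \emph{$\mathbb L_{2r}$} norm, $2r>2$, where the clean square-root cancellation of the $\mathbb L_2$ case no longer suffices, so one must balance the truncation radius $T$ against $u$ and exploit the $M$-free $\mathbb L^\infty$ bound on $\mathcal P_M$; Step (iii) is then bookkeeping---one only has to track how the ball radius $\rho_M\asymp M^{\upsilon}$ propagates through the Sobolev rescaling so that all powers of $M^{\upsilon}$ collapse into the exponent $\upsilon(s'+d+b\dimY/p')\cdot b\dimY/s'$, and to check that any triple $(s',p',d)$ with $s'>b\dimY/p'$ and $d>s'+b\dimY(1-1/p')$ is admissible.
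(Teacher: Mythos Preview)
Your route is genuinely different from the paper's, and both of your main steps carry real gaps.

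\textbf{Step (i).} The paper does not truncate at all. It invokes \cite[Lemma~7.26]{massart:2007}, which gives directly, for probability densities $p_1,p_2$,
\[
\|g_{p_1}-g_{p_2}\|_{\mathbb L_{2r}(\PPim)}^{2r}\le C\,\|\sqrt{p_1}-\sqrt{p_2}\|_{\mathbb L_2(\Rset^{b\dimY})}^{2}\le C\,\|p_1-p_2\|_{\mathbb L_1(\Rset^{b\dimY})}\eqsp.
\]
Together with the monotonicity $p\mapsto g_p$, this reduces $\mathbb L_{2r}(\PPim)$-bracketing of $\mathcal G_M$ to $\mathbb L_1(\Rset^{b\dimY})$-bracketing of $\mathcal P_M$ in one line, and yields exactly the exponent $u^{2r}$. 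Your truncation argument at radius $T\asymp\sqrt{\log(1/u)}$ does not: on $\{\|\bfy\|\le T\}$ you pick up a factor $p_\star^{1-r}\lesssim u^{-c'}$, so you need $p$-brackets of size $u^{1+c}$ for some $c>0$, which changes the exponent in \eqref{NPHMM:eq:brackets}. Saying this is ``harmless after renaming constants'' is precisely the gap --- it alters the power of $u$, not just the constant. Massart's lemma is the missing tool.

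\textbf{Steps (ii)--(iii).} The paper does not restrict to a ball and rescale. It shows (Lemma~\ref{NPHMM:lem:norm:sobo}) that $p_{f,\dens}$ lies in the \emph{weighted} Sobolev space $W^{s',p'}(\Rset^{b\dimY},\langle\bfy\rangle^{d})$ with norm $\lesssim M^{\upsilon(s'+d+b\dimY/p')}$, and then applies the entropy bound of \cite[Corollary~4]{nickl:potscher:2007} for such weighted balls in $\mathbb L_1(\Rset^{b\dimY})$. This is where the parameter $d$ comes from and why the constraint $d>s'+b\dimY(1-1/p')$ appears: it is the decay condition in Nickl--P\"otscher that makes the $\mathbb L_1$-entropy finite on the unbounded domain. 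In your approach there is no weight, so $d$ has no natural meaning; you invoke the constraint on $d$ ``so the embedding underlying the estimate is valid'', but Birman--Solomjak on a bounded ball needs no such $d$. Moreover, the ball you restrict to has radius $\rho_M+O(\sqrt{\log(1/u)})$ from Step~(i), not just $\rho_M\asymp M^{\upsilon}$, so the rescaling also introduces $\log(1/u)$ factors that do not appear in \eqref{NPHMM:eq:brackets}. Finally, your claim that the rescaling produces exactly the power $M^{\upsilon(s'+d+b\dimY/p')}$ is unsupported: with an $M$-independent $W^{d,p'}$ bound via Young and a ball of radius $M^{\upsilon}$, the powers of $M^{\upsilon}$ you can generate come only from the volume and the change of variables, and there is no mechanism to make the free parameter $d$ enter as it does in the statement.

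In short, the paper's proof is shorter and cleaner because it uses two off-the-shelf results you are reinventing by hand: Massart's square-root lemma for Step~(i), and the weighted-Sobolev entropy bound of Nickl--P\"otscher for Steps~(ii)--(iii). Your alternative could perhaps be pushed through to give \emph{some} entropy bound with extra logarithmic factors and a different $M$-exponent, but not the bound stated in the lemma.
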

 
\begin{proof}
By \cite[Lemma 7.26]{massart:2007}, for any probability densities $p_1$ and $p_2$ on $\mathbb{R}^{b\dimY}$,  
\begin{align*}
||g_{p_2} - g_{p_1}||^{2r}_{\mathrm{L}^{2r}(\PPim)} &\le C ||\sqrt{p_2}-\sqrt{p_1}||^2_{\mathrm{L}^{2}(\mathbb{R}^{b\dimY})}\eqsp.
\end{align*}
Since $||\sqrt{p_2}-\sqrt{p_1}||^2_{\mathrm{L}^{2}(\mathbb{R}^{b\dimY})} \le || p_2 - p_1 ||_{\mathrm{L}^{1}(\mathbb{R}^{b\dimY})}$, this yields, for any $u>0$,
\begin{equation}
\label{eq:HGHP}
H_{[]}(u,||\cdot||_{\mathrm{L}^{2r}(\PPim)},\mathcal{G}_M) \le H_{[]}\left(u^{2r}/C, || \cdot||_{\mathrm{L}^{1}(\mathbb{R}^{b\dimY})},\mathcal{P}_M\right)\eqsp, 
\end{equation}
where $\mathcal{P}_M \eqdef \left\{p_{f,\dens};\; \dens\in\mathcal{D}_b,\;f\in\mathcal{F} \; \mbox{and}\; I(f)\le M \right\}$. Thus, it remains to bound the entropy with bracketing of $\mathcal{P}_M$ associated with $|| \cdot||_{\mathrm{L}^{1}(\mathbb{R}^{b\dimY})} $ to control  the entropy with bracketing of $\mathcal{G}_M$ associated with $|| \cdot||_{\mathrm{L}^{2r}(\mathbb{P}_{\star})} $. For any $q > 1 $ and $s\ge 0$, define the Sobolev space on $\mathbb{R}^{b\ell}$:
\[
W^{s,q}\left(\mathbb{R}^{b\ell},\Rset\right) \eqdef \left\{h:\mathbb{R}^{b\ell} \to \mathbb{R}; \; \; D^{\alpha}h\in \mathrm{L}^{q}, \alpha\in\Nset^{b\ell}\; \mbox{and}\;0\leq|\alpha|\leq s\right\}\eqsp.
\]
For any $\beta>0$, let $\langle\cdot\rangle^{\beta}$ be the polynomial function on $\mathbb{R}^{b\dimY}$ given by   $\bfy \mapsto \langle \bfy\rangle^{\beta} \eqdef \left(1 + \norminf{\bfy}{}^{2} \right)^{\beta/2}$ and $W^{s,q}\left(\mathbb{R}^{b\ell},\langle\cdot\rangle^{\beta}\right)$ be the corresponding weighted Sobolev space:
\[
W^{s,q}\left(\mathbb{R}^{b\ell},\langle\cdot\rangle^{\beta}\right) \eqdef \left\{h:\mathbb{R}^{b\ell} \to \mathbb{R};\;\ \bfy \mapsto \langle\bfy\rangle^{\beta}h(\bfy) \in  W^{s,q}\left(\mathbb{R}^{b\ell},\Rset\right)\right\}\eqsp.
\] 
Lemma~\ref{NPHMM:lem:norm:sobo}  establishes that, for any $M\ge 1$, $q>1$, $s>b\dimY/q$ and even integer $\beta$, the normalized classes of functions $\mathcal{P}_{M}/M^{\upsilon(s+\beta+b\dimY/q)}$ are in the same bounded subspace of $W^{s,q}(\Rset^{b\dimY},\langle\bfy\rangle^{\beta})$. By \cite[Corollary 4]{nickl:potscher:2007}, for any $q> 1$, and any $s>b\dimY/q$, provided that $\beta>s + b\dimY(1-1/q)$, there exists a constant $C$ such that, for all $\epsilon>0$,
\begin{equation*} 
H_{[]}\left(\epsilon,\norminf{\cdot}{\rmL^{1}(\mathbb{R}^{b\dimY})},\mathcal{P}_{M}/M^{\upsilon(s+\beta+b\dimY/q)}\right) \le C \epsilon^{-b\dimY/s}\eqsp.
\end{equation*}
The proof is concluded by \eqref{eq:HGHP}.
\end{proof}

\begin{lemma}\label{NPHMM:lem:norm:sobo}
Assume that H\ref{assum:pen} holds for some $\upsilon>0$. Then, for any $q>1$, $s>b\dimY /q$ and any even $\beta>0$, there exists $C>0$ such that for any $f \in \mathcal{F}$ and any $\nu\in\mathcal{D}_b$,
\[
\norminf{\bfy\mapsto\langle \bfy\rangle^{\beta} p_{f,\nu}(\bfy)}{W^{s,q}(\mathbb{R}^{b\dimY},\Rset)} \le C(1\vee I(f)^\upsilon)^{s+\beta+b\dimY/q}\eqsp.
\]
\end{lemma}
\begin{proof}
Let $f$ be a function in $\mathcal{F}$, for any $\nu\in\mathcal{D}_b$, 
\[
\norminf{\bfy\mapsto\langle \bfy \rangle^{\beta} p_{f,\nu}(\bfy)}{W^{s,q}(\Rset^{b\dimY},\Rset)}^{q} = \sum\limits_{|\alpha| \le s} \norminf{D^{\alpha} \left(\langle \bfy\rangle^{\beta}p_{f,\nu}(\bfy)\right) }{\rmL^{q}}^{q}\eqsp.
\]
Applying the general Leibniz rule component by component yields, for any $\alpha \in \Nset^{b\dimY}$,
\begin{equation}\label{NPHMM:eq:derivative}
D^{\alpha} \left(\langle \bfy\rangle^{\beta}p_{f,\nu}(\bfy)\right)   = \sum\limits_{\alpha' \le \alpha} \prod_{j=1}^{b\dimY} {\alpha_{j}\choose\alpha_{j}'}  D^{\alpha'}(\langle \bfy\rangle^{\beta}) D^{\alpha-\alpha'}(p_{f,\nu}(\bfy))\eqsp.
\end{equation}
Then, Lemma~\ref{NPHMM:lem:norm:sobo} requires to control $\norminf{D^{\alpha^{(1)}}(\langle \bfy\rangle^{\beta})  D^{\alpha^{(2)}}(p_{f,\nu})  }{\rmL^{p'}}$ for any given $\alpha^{(1)}$ and $\alpha^{(2)}$ in $\Nset^{b\dimY}$. For any $\alpha$ in $\Nset^{b\dimY}$, there exists a polynomial function $P_{\alpha}$ with degree lower than $|\alpha|$ such that, for any $\bfy \in \Rset^{b\dimY}$,
\begin{equation}
\label{NPHMM:ed:sobo:derivative}
D^{\alpha} p_{f,\nu}(\bfy) = \int_{\bfx\in \mathbb{X}^b} P_{\alpha}(\mathbf{f}(\bfx) - \bfy)\exp\left\{-\frac{1}{2}\norm{\mathbf{f}(\bfx) - \bfy}^{2}\right\}  \nu(\bfx) \mu^{\otimes b}(\rmd \bfx)\eqsp. 
\end{equation}
Moreover, since $\beta$ is an even number, for any $\alpha \in \Nset^{b\dimY}$ such that $|\alpha | \le \beta$, $D^{\alpha} (\langle \bfy\rangle^{\beta})$ is a polynomial function denoted by $P_{\beta,\alpha}$ with degree lower than $\beta-|\alpha|$. In the case where $|\alpha | >  \beta$, $D^{\alpha}(\langle \bfy\rangle^{\beta})=0$. Define $\kappa(\upsilon,f)\eqdef 1\vee I(f)^\upsilon$. By H\ref{assum:pen}, there exists a constant $C>0$ such that, for any $\bfx\in\mathbb{X}^b$, $\norminf{\mathbf{f}(\bfx)}{}\leq  C I(f)^\upsilon  \leq  C \kappa(\upsilon,f)$. Since $P_{\alpha^{(2)}}$ and $P_{\beta,\alpha^{(1)}}$ are both polynomial functions, there exists a constant $C$ depending on $\alpha^{(1)},\alpha^{(2)}$ and $\beta$ such that, for any $\bfy\in\Rset^{b\dimY}$ and any $\bfx\in\mathbb{X}^{b}$,
\begin{equation*}
\left| P_{\beta,\alpha^{(1)}} (\bfy) P_{\alpha^{(2)}}(\mathbf{f}(\bfx) - \bfy) \right|
\le	 \1_{|\alpha^{(1)}| \le \beta} \left[ C (1+\norminf{\bfy}{})^{\beta-|\alpha^{(1)}|} \times\left( \kappa(\upsilon,f) +\norminf{\bfy}{}\right)^{|\alpha^{(2)}|}\right]\eqsp.		
\end{equation*}
Define the following subset of $\Rset^{b\dimY}$
\[
A_{f}\eqdef\left\{\bfy\in\Rset^{b\dimY};\; \norm{\bfy} \le  C\kappa(\upsilon,f) \right\} \eqsp.
\]
$\norm{\mathbf{f}(\bfx) - \bfy}$ can be lower bounded by 0 when $\bfy\in A_{f}$ and by $\norminf{\bfy}{} - C\kappa(\upsilon,f)$ when $\bfy\in A_{f}^{c}$. Therefore, uniformly in $\bfx \in \mathbb{X}^{b}$, 
\begin{equation*}
\exp\left\{-\norm{\bff(\bfx) - \bfy}^{2}/2\right\} \le \1_{A_{f}}(\bfy)
+ \1_{A_{f}^{c}}(\bfy)\exp\left\{-\left( C\kappa(\upsilon,f) -\norminf{y}{}\right)^{2}/2\right\}\eqsp.
\end{equation*}
Then, there exists a constant $C>0$, such that for any $q> 1$, 
\[
\norminf{D^{\alpha^{(1)}}(\langle \bfy\rangle^{\beta})  D^{\alpha^{(2)}}(p_{f,\nu})}{\rmL_{q}}^{q} \le\1_{|\alpha^{(1)}| \le \beta} \Bigg[ C\kappa(\upsilon,f)^{q|\alpha^{(2)}|}\left(I_{1} + I_{2}\right)\Bigg]\eqsp,
\]
where, 
\begin{align*}
I_{1}&\eqdef\int_{A_{f}} \left(1+\norminf{\bfy}{}\right)^{q(\beta-|\alpha^{(1)}|)}
\left(1+\frac{\norminf{\bfy}{}}{\kappa(\upsilon,f)}\right)^{q|\alpha^{(2)}|} \lambda^{\otimes b}(\rmd\bfy)\eqsp,\\
I_{2}&\eqdef \int_{A_{f}^{c}}  \left(1+\norminf{\bfy}{}\right)^{q(\beta-|\alpha^{(1)}|)}
\left(1+\frac{\norminf{\bfy}{}}{\kappa(\upsilon,f)}\right)^{q|\alpha^{(2)}|}\rme^{-q \left(C\kappa(\upsilon,f) -\norminf{y}{}\right)^{2}/2}\lambda^{\otimes b}(\rmd\bfy)\eqsp.
\end{align*}
By the change of variables $\bfz'=(\kappa(\upsilon,f))^{-1}\bfy$ in $I_{1}$ and $I_{2}$, there exists a constant $C$ such that
\begin{equation}\label{NPHMM:eq:dbl:deriv}
 \norminf{D^{\alpha^{(1)}}(\langle \bfy\rangle^{\beta})  D^{\alpha^{(2)}}(p_{f,\nu})}{\rmL_{q}}^{q} \le C\kappa(\upsilon,f)^{q(|\alpha^{(2)} |-|\alpha^{(1)}| + \beta) + b\dimY}\eqsp.
\end{equation}
Using \eqref{NPHMM:eq:dbl:deriv} in \eqref{NPHMM:eq:derivative}  with $\alpha^{(1)} = \alpha'$ and  $\alpha^{(2)} = \alpha-\alpha' $ for any $|\alpha|\le s$ and $\alpha'\le\alpha$ concludes the proof of Lemma~\ref{NPHMM:lem:norm:sobo}.
\end{proof}

\section{Proof of Lemma~\ref{NPHMM:lem:EM}}
\label{se:proof:EM}
The proof follows the same lines as the one for the usual EM algorithm. For all $0\le k\le n-1$, all $f\in\mathcal{F}$ and all $a\in\mathcal{A}$
\begin{align*}
\ln \left[p_{f,a}\left(\mathbf{Y}_{k}\right)\mathrm{e}^{-\lambda_nI(f)/n}\right]&=\ln \left[\int p_{f,a}\left(\bfx,\mathbf{Y}_{k}\right)\mathrm{e}^{-\lambda_nI(f)/n}\mu^{\otimes 2}(\rmd \bfx)\right]\eqsp,\\
&=\ln\left[\int p_{f,a}\left(\bfx,\mathbf{Y}_{k}\right)\mathrm{e}^{-\lambda_nI(f)/n}\frac{p_{\param{}{p},\widehat{a}^{p}}\left(\bfx|\mathbf{Y}_{k}\right)}{p_{\param{}{p},\widehat{a}^{p}}\left(\bfx|\mathbf{Y}_{k}\right)}\mu^{\otimes 2}(\rmd \bfx)\right]\eqsp,\\
&=\ln \left[\int p_{\param{}{p},\widehat{a}^{p}}\left(\bfx|\mathbf{Y}_{k}\right)\frac{p_{f,a}\left(\bfx,\mathbf{Y}_{k}\right)\mathrm{e}^{-\lambda_nI(f)/n}}{p_{\param{}{p},\widehat{a}^{p}}\left(\bfx|\mathbf{Y}_{k}\right)}\mu^{\otimes 2}(\rmd \bfx)\right]\eqsp,\\
&\ge\int p_{\param{}{p},\widehat{a}^{p}}\left(\bfx|\mathbf{Y}_{k}\right)\ln\hspace{-.1cm}\left[\frac{p_{f,a}\left(\bfx,\mathbf{Y}_{k}\right)\mathrm{e}^{-\lambda_nI(f)/n}}{p_{\param{}{p},\widehat{a}^{p}}\left(\bfx|\mathbf{Y}_{k}\right)}\right]\hspace{-.1cm}\mu^{\otimes 2}(\rmd \bfx)\eqsp,
\end{align*}
where the last inequality comes from the concavity of $x\mapsto \log x$. Then, 
\begin{multline*}
\ln \left[p_{f,a}\left(\mathbf{Y}_{k}\right)\mathrm{e}^{-\lambda_nI(f)/n}\right]-\ln \left[p_{\param{}{p},\widehat{a}^{p}}\left(\mathbf{Y}_{k}\right)\mathrm{e}^{-\lambda_nI(\param{}{p})/n}\right]\\
\geq \mathbb{E}_{\widehat{a}^p,\param{}{p}}\left[\ln p_{f,a}\left(\mathbf{X}_{k},\mathbf{Y}_{k}\right)-\ln p_{\param{}{p},\widehat{a}^{p}}\left(\mathbf{X}_{k},\mathbf{Y}_{k}\right)\middle| \mathbf{Y}_{k}\right] -\frac{\lambda_n}{n}\left(I(f)-I(\param{}{p})\right)\eqsp.
\end{multline*}
The proof is concluded by definition of $\widehat{a}^{p+1}$ and $\param{}{p+1}$.

\bibliographystyle{plain}
\bibliography{NPHMMbib}

\end{document}